\newtheorem{lemma}{Lemma}
\newtheorem{theorem}{Theorem}
\newtheorem{corollary}{Corollary}
\newtheorem{remark}{Remark}
\newtheorem{conjecture}{Conjecture}
\newcommand{\dss}{\displaystyle\sum}
\newcommand{\vol}{{\rm Vol}}
\newcommand{\WL}[1]{\textcolor{magenta}{{}Comment by WL: #1}}
\newcommand{\bx}{\mathbf{x}}
\newcommand{\by}{\mathbf{y}}
\newcommand{\bz}{\mathbf{z}}
\title{On the maximum second eigenvalue of outerplanar graphs}
\author{George Brooks
\thanks{University of South Carolina, Columbia, SC. ({\tt ghbrooks@email.sc.edu}). The author is partially supported by NSF DMS 2038080 grant.}
\and
Maggie Gu \thanks{University of North Carolina. ({\tt  meiqi.gu@gmail.com}).
 The author is partially supported by NSF DMS 2038080 grant through a summer REU program. }
 \and Jack Hyatt \thanks{University of South Carolina. ({\tt  jahyatt@email.sc.edu}).
 The author is partially supported by NSF DMS 2038080 grant through a summer REU program. }
\and 
William Linz \thanks{University of South Carolina, Columbia, SC. ({\tt wlinz@mailbox.sc.edu}). The author is partially supported by NSF DMS 2038080 grant.}
\and Linyuan Lu \thanks{University of South Carolina, Columbia, SC. ({\tt lu@math.sc.edu}). The author is partially supported by NSF DMS 2038080 grant.}
}
\begin{document}

\maketitle

\begin{abstract}
For a fixed positive integer $k$ and a graph $G$, let $\lambda_k(G)$ denote the $k$-th largest eigenvalue of the adjacency matrix of $G$. In 2017, Tait and Tobin~\cite{TT2017} proved that the maximum $\lambda_1(G)$ among all outerplanar graphs on $n$ vertices is achieved by the fan graph $K_1\vee P_{n-1}$. In this paper, we consider a similar problem of determining the maximum $\lambda_2$ among all connected outerplanar graphs on $n$ vertices. For $n$ even and sufficiently large, we prove that the maximum $\lambda_2$ is uniquely achieved by  the graph $(K_1\vee P_{n/2-1})\!\!-\!\!(K_1\vee P_{n/2-1})$, which is obtained by connecting two disjoint copies of $(K_1\vee P_{n/2-1})$ through a new edge
joining their smallest degree vertices. When $n$ is odd and sufficiently large, the extremal graphs are not unique. The extremal graphs are those graphs $G$ that contain a cut vertex $u$ such that $G\setminus \{u\}$ is isomorphic to $2(K_1\vee P_{n/2-1})$. 
We also determine the maximum $\lambda_2$ among all 2-connected outerplanar graphs and asymptotically determine the maximum of $\lambda_k(G)$ among all connected outerplanar graphs for any fixed $k$.
\end{abstract}

\section{Introduction}
Let $G = (V(G), E(G))$ be a graph on $n$ vertices, with $n$ a positive integer. The \textit{adjacency matrix} of $G$ is the $n\times n$ matrix $A = (a_{uv})_{u,v \in V(G)}$ with rows and columns indexed by $V(G)$ where $a_{uv} = 1$ if $u\sim v$ and $a_{uv} = 0$ otherwise. The \textit{eigenvalues} of $G$ are the eigenvalues of its adjacency matrix $A$. As $A$ is symmetric, these eigenvalues are real, and we label them in non-increasing order as $\lambda_1 \ge \lambda_2 \ge \cdots \ge  \lambda_n$.

The largest eigenvalue $\lambda_1$ is called the \textit{spectral radius} of a graph and has been extensively studied (see, for example, the older survey of Cvetkovi\'c and Rowlinson~\cite{Cvetkovic-Rowlinson90} or the more recent monograph by Stani\'c~\cite{Stanic2015}). More recently, there has been much interest in finding the maximum spectral radius over a family of graphs, as such results extend Tur\'an-type results via the inequality $\lambda_1 \ge \frac{2m}{n}$, where $m$ is the number of edges in the graph. An archetypal result is the spectral Tur\'an theorem of Nikiforov~\cite{Nik2, Nik0}, which determines the maximum spectral radius of a $K_{r+1}$-free graph. The maximum spectral radius has also been determined for many other classes of graphs, including outerplanar graphs and planar graphs~\cite{TT2017}, $K_{s, t}$-minor-free graphs~\cite{Nikiforov2017, Tait2019, ZL2022} and graphs forbidding a cycle or path of some prescribed length~\cite{Nik3, GH2019, YWZ2012, WZ2012}. 

\begin{figure}[hbt]
    \centering
\resizebox{5cm}{!}{\begin{tikzpicture}[scale=1, Wvertex/.style={circle, draw=black, fill=white, scale=1}, bvertex/.style={circle, draw=black, fill=black, scale=0.3}]

\node [bvertex] (v0) at (-0.5, -0.5) {};
\node [bvertex] (v1) at (-3, 1) {};
\node [bvertex] (v2) at (-2, 1) {};
\node [bvertex] (v3) at (-1, 1) {};
\node [bvertex] (v4) at (0, 1) {};
\node [bvertex] (v5) at (1, 1) {};
\node [bvertex] (v6) at (2, 1) {};

\draw (v0) -- (v1);
\draw (v0) -- (v2);
\draw (v0) -- (v3);
\draw (v0) -- (v4);
\draw (v0) -- (v5);
\draw (v0) -- (v6);

\draw (v1) -- (v2);
\draw (v3) -- (v2); 
\draw [dashed] (v4) -- (v3) node [midway, fill=white, above=3pt] {$...$};
\draw (v5) -- (v4);
\draw (v6) -- (v5);

\end{tikzpicture}}%
    \caption{The fan graph $K_1\vee P_{n-1}$.}
    \label{fig:fangraph}
\end{figure}
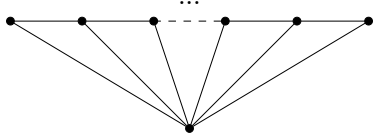

In contrast to the spectral radius, the $k$th largest eigenvalue of a graph has not been well-studied for $k\ge 3$. The second largest eigenvalue of a graph has been reasonably well-studied because of its relation to connectivity and expansion properties of graphs (see, for example, the survey of Cvetkovi\'c and Simi\'c~\cite{CS95} and the book of Stani\'c~\cite{Stanic2015}). The graphs with largest second eigenvalue have been determined for trees~\cite{Neumaier82, Shao95, Hof} and connected graphs~\cite{Hon, ZLW2012}. However, there are many classes of graphs for which the graph with maximum second largest eigenvalue is unknown, let alone the graph with maximum $k$th largest eigenvalue. Even the graph with maximum $k$th largest eigenvalue over the family of all graphs on $n$ vertices is unknown for any $k\ge 3$~\cite{Hon, Nik1, Linz2023}. The tree with maximum $k$th largest eigenvalue has been determined in a sequence of papers~\cite{Neumaier82, Yuan86, Shao95, Chen07}. 

In this paper, we study the second eigenvalue $\lambda_2$ of connected outerplanar graphs. A graph is \textit{outerplanar} if it can be drawn in the plane without crossing edges such that each of its vertices lies on the outer face. Equivalently, a graph is outerplanar if and only if it is $K_{2,3}$-minor-free and $K_4$-minor-free.  Cvetkovi\'c and Rowlinson~\cite{Cvetkovic-Rowlinson90} conjectured that the \textit{fan graph} $K_1 \vee P_{n-1}$ (see Figure~\ref{fig:fangraph}) is the outerplanar graph with maximum spectral radius. Work was done on this conjecture by Rowlinson~\cite{Rowlinson90} and Zhou, Lin and Hu~\cite{ZLH01}. The conjecture was proved for sufficiently large $n$ by Tait and Tobin~\cite{TT2017}. Subsequently, the outerplanar graph with maximum spectral radius was determined for all values of $n$ by Lin and Ning~\cite{Lin-Ning19}. 

By contrast, not much seems to be known about the middle eigenvalues of outerplanar graphs. Li and Sun~\cite{LS2023} have studied outerplanar graphs with small $\lambda_2$, but the $k$th largest eigenvalues of outerplanar graphs with $k\ge 2$ has not been previously studied.  

For a fixed $k$, we define $\lambda_{k,max}(n)$, or $\lambda_{k,max}$ for short, to be the maximum $k$th eigenvalue among all connected outerplanar graphs on $n$ vertices.  We have the following general theorem, which determines $\lambda_{k,max}(n)$ up to the second order.

\begin{theorem}\label{thm:t1}
For any fixed integer $k\geq 2$, we have
\begin{equation*}
\lambda_{k,max}(n)=\sqrt{n/k}+ 1+ O\left(\frac{1}{\sqrt{n}} \right).
\end{equation*}
Moreover, any outerplanar graph $G$ on $n$ vertices with $\lambda_k(G) =\lambda_{k,max}(n)$ contains exactly $k$ vertices of degree $\frac{n}{k} \pm O(1)$.
\end{theorem}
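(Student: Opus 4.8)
The plan is to prove matching upper and lower bounds on $\lambda_{k,max}(n)$.

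Lower bound: exhibit a connected outerplanar graph with $\lambda_k$ at least $\sqrt{n/k} + 1 + O(1/\sqrt n)$. The natural candidate is $k$ vertex-disjoint copies of the fan graph $K_1 \vee P_{n/k - 1}$, joined into a connected graph by a path or by identifying/linking their apex vertices in a way that stays outerplanar (e.g., a "book of fans"). Since $\lambda_1(K_1 \vee P_m) = \sqrt{m} + O(1) = \sqrt{n/k} + 1 + O(1/\sqrt n)$ — here I would quote the Tait–Tobin analysis, or just a direct Rayleigh-quotient estimate using the test vector supported on the apex and its neighbors — a disjoint union of $k$ fans has $\lambda_k$ equal to that value. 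Adding $O(1)$ edges to make the graph connected perturbs each eigenvalue by $O(1/\sqrt n)$ (e.g. by eigenvalue interlacing for edge addition, or by bounding the norm of the perturbation applied to the relevant eigenvectors), so $\lambda_k$ of the connected graph is still $\sqrt{n/k} + 1 + O(1/\sqrt n)$. This direction is routine.

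Upper bound: this is the crux. I must show every connected outerplanar $G$ on $n$ vertices has $\lambda_k(G) \le \sqrt{n/k} + 1 + O(1/\sqrt n)$. The key structural input is that outerplanar graphs are sparse: $|E(G)| \le 2n - 3$, and moreover every subgraph is sparse, so $G$ has bounded degeneracy (degeneracy $\le 2$). By the min-max (Courant–Fischer) characterization, $\lambda_k(G) \le \max_{\dim U = n-k+1} \min_{0 \ne x \in U} \frac{x^\top A x}{x^\top x}$ is not directly the handle; instead I would use the fact that if $G$ has $k$ vertices whose removal leaves a graph $H$ with $\lambda_1(H)$ small, then $\lambda_k(G) \le \lambda_1(H)$ (Cauchy interlacing: deleting $k$ vertices drops $\lambda_k$ below $\lambda_1$ of the remainder). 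So the strategy is: in any outerplanar $G$, find a set $S$ of at most $k-1$ vertices such that every component of $G - S$ is outerplanar with few enough edges that its spectral radius is at most $\sqrt{n/k} + 1 + O(1/\sqrt n)$. For an outerplanar (hence $2$-degenerate) graph on $m$ vertices one has $\lambda_1 \le \sqrt{2\Delta}$-type bounds, but more usefully $\lambda_1(H)^2 \le \max_{v} \sum_{u \sim v} d(u) \le \max_v (d(v) + \text{(number of edges in } N(v) \cup \{v\}))$; combined with outerplanarity of the neighborhood this gives $\lambda_1(H) \le \sqrt{\Delta(H)} + O(1)$. Thus it suffices to delete at most $k-1$ vertices so that the remaining graph has maximum degree at most $n/k + O(1)$. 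If $G$ already has at most $k-1$ vertices of degree exceeding $n/k + C$ for a suitable constant $C$, we are done by deleting them. If $G$ has $k$ or more vertices of degree $> n/k + C$, I derive a contradiction with $|E(G)| \le 2n - 3$: $k$ vertices of degree $> n/k + C$ would seem to force $> n + kC/2 - (\text{overlaps})$ edges, which is not immediately contradictory, so the real work is a careful double-counting using the fact that high-degree vertices in an outerplanar graph have almost-disjoint neighborhoods except for $O(1)$ shared vertices (two vertices of degree $\ge 3$ can have at most $2$ common neighbors, else $K_{2,3}$ minor). Summing $\sum_i d(v_i)$ over the $k$ highest-degree vertices and using this near-disjointness yields $\sum_i d(v_i) \le n + O(k)$, forcing all but at most $k-1$ of them below $n/k + O(1)$; I expect this counting step — making the "near-disjoint neighborhoods" bound quantitatively tight enough — to be the main obstacle.

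Finally, the structural statement (any extremal $G$ has exactly $k$ vertices of degree $n/k \pm O(1)$): the upper-bound argument already shows at most $k-1$ vertices exceed $n/k + O(1)$ is impossible for an extremal graph — if it had at most $k-1$ such, deleting them would leave spectral radius $\le \sqrt{(n/k) + O(1)} + O(1) < \sqrt{n/k} + 1 + O(1/\sqrt n)$ once the $O(1)$ additive degree slack is pushed below the threshold, contradicting extremality; hence an extremal graph has at least $k$ vertices of degree $\ge n/k - O(1)$. The near-disjoint-neighborhood edge count then caps this at exactly $k$ and forces each such degree to be $n/k + O(1)$ (too much total degree among $k+1$ vertices violates $|E| \le 2n-3$, and a degree much below $n/k$ among the top $k$ would, via the same interlacing argument with a slightly larger deleted set, again contradict extremality). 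Assembling these inequalities gives the claimed $n/k \pm O(1)$.
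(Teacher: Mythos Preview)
Your lower bound is essentially the paper's: build $k$ disjoint fans and connect them. The paper connects them by adding a single new vertex $u$ adjacent to one vertex in each fan, so that $\lambda_k(G)\ge \lambda_k(G\setminus\{u\})=\lambda_1(K_1\vee P_{q-1})$ follows immediately from Cauchy interlacing; your edge-addition claim ``perturbs each eigenvalue by $O(1/\sqrt n)$'' is not delivered by Weyl or interlacing alone (those give $O(1)$ per edge) and would need a separate eigenvector argument you do not supply.

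The real gap is in the upper bound. Your inequality $\lambda_1(H)^2\le \max_v\sum_{u\sim v}d(u)$ is correct, but it does \emph{not} yield $\lambda_1(H)\le \sqrt{\Delta(H)}+O(1)$ for outerplanar $H$: the row sum decomposes as $d(v)+2e(N(v))+e(N(v),V\setminus N[v])$, and the last term can be $\Theta(|V(H)|)$ even when $\Delta(H)=n/k+O(1)$. So your argument, as written, only reaches $\lambda_k(G)\le \sqrt{n/k}+O(1)$, losing the $+1$ that the theorem pins down with error $O(1/\sqrt n)$. The paper closes this gap by a different route. It works only with an \emph{extremal} $G$ and feeds the already-proved lower bound $\lambda_k(G)\ge \sqrt{n/k}+1+O(1/\sqrt n)$ back in: iteratively peeling off the top-degree vertex and applying interlacing produces $k$ vertices $u_1,\dots,u_k$ of degree $\ge n/k-O(\sqrt n)$, and $K_{2,3}$-freeness (pairwise common neighborhoods $\le 2$) then forces every other vertex to have degree $O(\sqrt n)$. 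Now set $G'=G\setminus\{u_1,\dots,u_{k-1}\}$; the Perron vector $\mathbf z$ of $G'$ satisfies $z_v=O(1/\sqrt n)$ off $u_k$, and expanding the eigen-equation one more step (counting triangles and length-$3$ paths through $u_k$, each bounded by $O(1)$ in an outerplanar graph) gives the cubic inequality
\[
\lambda_1(G')\bigl(\lambda_1(G')^2-d^{G'}_{u_k}\bigr)\ \le\ 2d^{G'}_{u_k}+O(\sqrt n).
\]
Combined with $d^{G'}_{u_k}\le n/k+k-1$ (your near-disjointness count), this yields $\lambda_1(G')\le \sqrt{n/k}+1+O(1/\sqrt n)$. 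Your sketch never reaches this cubic refinement, and the structural assertion that each of the $k$ big degrees is $n/k\pm O(1)$ (not merely $\pm O(\sqrt n)$) requires pushing the same eigenvector expansion one order further; the $\sqrt{\Delta}+O(1)$ heuristic cannot distinguish $n/k-O(1)$ from $n/k-O(\sqrt n)$.
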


\begin{conjecture}\label{conjkq+1}
    Suppose $n=kq+1$. We conjecture that for a fixed $k$ and sufficiently large $n$,
    $$\lambda_{k,max}(n)=\lambda_1(K_1\vee P_{q-1}).$$
     Moreover, any extremal graph $G$ on $n$ vertices with $\lambda_k(G)=\lambda_{k,max}(n)$ has the following structure: there exist a cut vertex $u$ such that deleting $u$ from $G$ results in $k$ copies of $K_1\vee P_{q-1}$.
\end{conjecture}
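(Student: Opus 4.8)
The plan is to prove Theorem~\ref{thm:t1} by matching a construction against a localization-type upper bound, the latter being a $k$-eigenvalue refinement of the Tait--Tobin strategy. For the lower bound I would take $q=\lfloor n/k\rfloor$, let $G_0$ be $k$ vertex-disjoint copies of the fan $K_1\vee P_{q-1}$ (attaching the at most $k-1$ leftover vertices as pendants to distinct fans), and make $G_0$ connected by adding $k-1$ bridges joining degree-$2$ path endpoints in distinct fans; this graph is connected and outerplanar, since it is a tree-like arrangement of outerplanar blocks. A one-line eigenvector computation gives $\lambda_1(K_1\vee P_{q-1})=1+\sqrt q+O(1/\sqrt q)$, and these are the top $k$ eigenvalues of $G_0$. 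To pass to the connected graph I would use Courant--Fischer, $\lambda_k=\max_{\dim U=k}\min_{0\ne x\in U}\frac{x^{\top}Ax}{x^{\top}x}$, with $U$ the span of the $k$ normalized Perron vectors (extended by zero): since each bridge joins vertices of Perron weight $O(1/\sqrt n)$, the cross terms are $O(1/n)$ and the Rayleigh quotient on $U$ stays $\ge 1+\sqrt{n/k}-O(1/\sqrt n)$, so $\lambda_{k,max}(n)\ge 1+\sqrt{n/k}-O(1/\sqrt n)$.

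For the upper bound I would first record the rough estimate: outerplanarity gives $|E(G)|\le 2n-3$, hence $\sum_i\lambda_i^2=\mathrm{tr}(A^2)=2|E(G)|\le 4n$, so $k\lambda_k(G)^2\le 4n$ and $\lambda_k(G)\le 2\sqrt{n/k}$. This already pins down the order of magnitude, which is the input to the refinement. Next I would use two structural facts valid for outerplanar $G$: (i) for every vertex $v$ the graph induced on $N(v)$ is a linear forest (else a $K_{2,3}$- or $K_4$-minor appears), so locally $v$ looks at most like the hub of the fan $K_1\vee P_{d(v)}$; and (ii) no two vertices have three common neighbors. Fixing orthonormal eigenvectors $\phi_1,\dots,\phi_k$ for $\lambda_1\ge\cdots\ge\lambda_k$, the heart of the proof is to show the top-$k$ eigenspace \emph{localizes}: there are $k$ distinct "hub" vertices $u_1,\dots,u_k$ of degree $\Omega(n)$, and after deleting a bounded-degree residual part, each $\phi_i$ is up to $o(1)$ supported on a fan-like block $B_i$ around $u_i$, with $\sum_i|B_i|\le n+O(1)$.

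To produce the hubs I would iterate the inequality $\mu^2\le\max_w\sum_{u\sim w}d(u)$ (valid for any eigenvalue $\mu$ with unit eigenvector) together with $\sum_v d(v)\le 4n$ and fact (i): since the degree mass around any $w$ cannot be spread thinly through a deep tree in an outerplanar graph, one bootstraps from "there exists a vertex of degree $\Omega(\sqrt n)$" up to "there exist $k$ vertices of degree $\Omega(n)$." Once the hubs $u_i$ and their blocks $B_i$ are isolated, a refined eigenvector estimate on $B_i$ --- or directly the Tait--Tobin bound applied to the outerplanar graph $B_i$ on $n_i:=|B_i|$ vertices --- gives $\lambda_i\le 1+\sqrt{n_i}+O(1/\sqrt{n_i})$. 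Since $\lambda_k$ is at most the contribution of the smallest block and $\sum_i n_i\le n+O(1)$, the quantity $\sqrt{\min_i n_i}$ is maximized when the $n_i$ are balanced, which forces $\lambda_k(G)\le 1+\sqrt{n/k}+O(1/\sqrt n)$; balancedness simultaneously forces each $d(u_i)=n/k\pm O(1)$ and leaves no room for a $(k+1)$st vertex of that degree, yielding the "moreover" statement.

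The main obstacle is the localization step. Proving that the top-$k$ eigenspace genuinely splits into $k$ fan-like blocks with $o(1)$-controlled interaction, and that each block is spectrally no better than a fan on the same number of vertices, is where the delicate Tait--Tobin-type analysis is needed --- eigenvector truncation, pushing eigenvector mass onto a single hub, and using $K_{2,3}$-minor-freeness to bound the interaction between a hub's neighborhood and the rest --- now carried out simultaneously for $k$ eigenvalues and with the extra accounting that the $k$ blocks must compete for only $n$ vertices. Handling the overlaps between blocks (shared vertices, paths between hubs) cleanly, so that the "$\sum n_i\le n+O(1)$" bookkeeping is honest, is the part I expect to require the most care.
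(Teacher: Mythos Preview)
The statement you were asked to prove is a \emph{conjecture}; the paper does not prove it. Your proposal is explicitly a plan to prove Theorem~\ref{thm:t1}, the asymptotic result $\lambda_{k,\max}(n)=\sqrt{n/k}+1+O(1/\sqrt n)$, not Conjecture~\ref{conjkq+1}, which asserts the \emph{exact} equality $\lambda_{k,\max}(n)=\lambda_1(K_1\vee P_{q-1})$ for $n=kq+1$ together with a structural characterization of all extremal graphs. Nothing in your sketch addresses the exact value or the cut-vertex structure; an $O(1/\sqrt n)$ upper bound cannot distinguish $\lambda_1(K_1\vee P_{q-1})$ from, say, $\lambda_1(K_1\vee P_{q-1})+c/\sqrt q$, so the proposal simply does not target the stated conjecture. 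The paper itself only settles the $k=2$ case of the conjecture (Theorem~\ref{thm:t3}), and that proof requires a much finer series-expansion/comparison argument than anything in your outline.

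Even read as a plan for Theorem~\ref{thm:t1}, your route diverges from the paper's and has a genuine soft spot. The paper avoids ``simultaneous localization of the top-$k$ eigenspace'' entirely: for the lower bound it uses Cauchy interlacing on a disjoint-fans subgraph (no Courant--Fischer perturbation needed), and for the upper bound it iteratively deletes the $k-1$ largest-degree vertices, applies interlacing $\lambda_k(G)\le\lambda_1(G')$, and then bounds $\lambda_1(G')$ on the remaining graph with a single hub via walk-counting identities. Your proposal to split the top-$k$ eigenspace into $k$ fan-like blocks with $o(1)$ interaction is exactly the step you flag as ``the main obstacle,'' and it is not clear it can be carried out: eigenvectors for $\lambda_2,\dots,\lambda_k$ need not localize on single hubs (they can be signed combinations across several), so the claim that ``each $\phi_i$ is up to $o(1)$ supported on a fan-like block $B_i$'' is not justified and is likely false as stated. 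The paper's interlacing trick sidesteps this entirely, and is both simpler and rigorous.
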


We further determine the extremal connected outerplanar graphs which achieve $\lambda_{2,max}$.

\begin{theorem}\label{thm:t2}
    For $n$ even and sufficiently large, among all connected outerplanar graphs on $n=2q$ vertices, the graph maximizing $\lambda_2$ is unique and isomorphic to the graph in Figure~\ref{fig:2qfig}, denoted by $(K_1\vee P_{q-1})\!\!-\!\!(K_1\vee P_{q-1})$, which is constructed by gluing two disjoint copies of the fan graph $K_1\vee P_{q-1}$ via an edge connecting vertices of smallest degree. 
    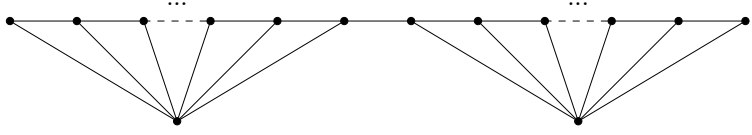
\begin{figure}[h]
     \begin{center}
     \resizebox{10cm}{!}{\begin{tikzpicture}[scale=1, Wvertex/.style={circle, draw=black, fill=white, scale=1}, bvertex/.style={circle, draw=black, fill=black, scale=0.3}]

\node [bvertex] (u0) at (-0.5, -0.5) {};
\node [bvertex] (u1) at (-3, 1) {};
\node [bvertex] (u2) at (-2, 1) {};
\node [bvertex] (u3) at (-1, 1) {};
\node [bvertex] (u4) at (0, 1) {};
\node [bvertex] (u5) at (1, 1) {};
\node [bvertex] (u6) at (2, 1) {};

\node [bvertex] (v0) at (5.5, -0.5) {};
\node [bvertex] (v1) at (3, 1) {};
\node [bvertex] (v2) at (4, 1) {};
\node [bvertex] (v3) at (5, 1) {};
\node [bvertex] (v4) at (6, 1) {};
\node [bvertex] (v5) at (7, 1) {};
\node [bvertex] (v6) at (8, 1) {};

\draw (u0) -- (u1);
\draw (u0) -- (u2);
\draw (u0) -- (u3);
\draw (u0) -- (u4);
\draw (u0) -- (u5);
\draw (u0) -- (u6);

\draw (u1) -- (u2);
\draw (u3) -- (u2); 
\draw [dashed] (u4) -- (u3) node [midway, above=3pt] {$...$};
\draw (u5) -- (u4);
\draw (u6) -- (u5);

\draw (v0) -- (v1);
\draw (v0) -- (v2);
\draw (v0) -- (v3);
\draw (v0) -- (v4);
\draw (v0) -- (v5);
\draw (v0) -- (v6);

\draw (v1) -- (v2);
\draw (v3) -- (v2); 
\draw [dashed] (v4) -- (v3) node [midway, above=3pt] {$...$};
\draw (v5) -- (v4);
\draw (v6) -- (v5);

\draw (u6) -- (v1);

\end{tikzpicture}}
     \end{center}
     \caption{The graph $(K_1\vee P_{q-1})\!\!-\!\!(K_1\vee P_{q-1})$.}
     \label{fig:2qfig}
     \end{figure}
\end{theorem}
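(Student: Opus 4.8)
The plan begins by pinning down $\lambda_2$ of the candidate $G^*:=(K_1\vee P_{q-1})\!-\!(K_1\vee P_{q-1})$. It carries the involution $\sigma$ swapping the two fans and fixing the bridge $ab$, where $a,b$ are its two degree-$2$ endpoints. Splitting $A(G^*)$ into the $\sigma$-symmetric and $\sigma$-antisymmetric blocks turns it into $A(K_1\vee P_{q-1})+e_ae_a^{\top}$ and $A(K_1\vee P_{q-1})-e_ae_a^{\top}$. A rank-one positive (negative) perturbation raises (lowers) the Perron value, and Cauchy interlacing after deleting the hub shows every non-Perron eigenvalue of either block is $O(1)$, while $\lambda_1(K_1\vee P_{q-1})=\Theta(\sqrt q)$; hence $\lambda_1(G^*)=\lambda_1(A(K_1\vee P_{q-1})+e_ae_a^{\top})>\lambda_1(K_1\vee P_{q-1})$ and
\[
\lambda_2(G^*)=\lambda_1\big(A(K_1\vee P_{q-1})-e_ae_a^{\top}\big)\ \ge\ \lambda_1(K_1\vee P_{q-1})-z_a^2 ,
\]
the last bound coming from the $2$-dimensional test space spanned by the two fan Perron vectors. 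Here $z$ denotes the unit Perron vector of $K_1\vee P_{q-1}$, $z_a^2=\Theta(1/q)$, and first-order perturbation refines this to $\lambda_2(G^*)=\lambda_1(K_1\vee P_{q-1})-(1-o(1))z_a^2$. A short computation shows $z_v^2$ is minimized, over all vertices $v$, exactly at the two degree-$2$ vertices (the Perron value increases strictly along the path toward the centre, the hub value being $\Theta(1)$).

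\textbf{The split into two fans.} Let $G$ be extremal with $\lambda_2$-eigenvector $x$. Theorem~\ref{thm:t1} with $k=3$ gives $\lambda_2(G)\ge\lambda_2(G^*)>\lambda_{3,max}(n)\ge\lambda_3(G)$, so $x$ is unique up to scaling; set $V^{\pm}=\{v:\pm x_v>0\}$, $V^0=\{v:x_v=0\}$, with $V^{\pm}\neq\emptyset$ since $x$ is orthogonal to the positive Perron vector. From $\lambda_2x_v=\sum_{w\sim v}x_w$ and the signs, $A(G[V^{+}])\,x|_{V^{+}}\ge\lambda_2\,x|_{V^{+}}$ entrywise with $x|_{V^{+}}>0$, so $\lambda_1(G[V^{+}])\ge\lambda_2(G)$, and likewise for $V^{-}$. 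As $G[V^{\pm}]$ is outerplanar, $\lambda_2(G)\le\lambda_1(G[V^{\pm}])\le\sqrt{2|E(G[V^{\pm}])|}\le2\sqrt{|V^{\pm}|}$, forcing $|V^{\pm}|=\Omega(q)$, so Tait--Tobin~\cite{TT2017} applies: $\lambda_1(G[V^{+}])\le\lambda_1(K_1\vee P_{|V^{+}|-1})$. Since $\lambda_1(K_1\vee P_{q-2})\le\lambda_1(K_1\vee P_{q-1})-\Theta(q^{-1/2})<\lambda_1(K_1\vee P_{q-1})-z_a^2\le\lambda_2(G^*)\le\lambda_2(G)$ and $\lambda_1(K_1\vee P_{m-1})$ is increasing in $m$, we get $|V^{+}|\ge q$ and symmetrically $|V^{-}|\ge q$, hence $|V^{+}|=|V^{-}|=q$ and $V^0=\emptyset$. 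Connectivity then gives a $V^{+}\!-\!V^{-}$ edge, making the inequality strict: $\lambda_1(G[V^{\pm}])>\lambda_2(G)$. Finally one invokes a ``runner-up'' strengthening of Tait--Tobin: an outerplanar graph on $q$ vertices not isomorphic to $K_1\vee P_{q-1}$ has spectral radius at most $\lambda_1(K_1\vee(P_{q-2}\cup P_1))=\lambda_1(K_1\vee P_{q-1})-(2-o(1))z_a^2$ (the extremal runner-up is the fan with its endmost path-edge deleted, and the loss is the edge-removal perturbation $2z_{p_1}z_{p_2}$). Combined with $\lambda_1(G[V^{+}])>\lambda_2(G)\ge\lambda_1(K_1\vee P_{q-1})-z_a^2$ and $(2-o(1))z_a^2>z_a^2$ for large $q$, this forces $G[V^{+}]\cong G[V^{-}]\cong K_1\vee P_{q-1}$.

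\textbf{The bridge.} Hence $G$ is two copies of $K_1\vee P_{q-1}$ plus a nonempty edge set $F$ between them, and $|E(G)|\le2(2q)-3$ together with $2(2q-3)$ edges inside the fans forces $|F|\le3$. Writing $A(G)=A(2(K_1\vee P_{q-1}))+A(F)$, where $\|A(F)\|=O(1)$ against the gap $\lambda_1-\lambda_3$ of $2(K_1\vee P_{q-1})$, which is $\Theta(\sqrt q)$, degenerate perturbation theory on the $2$-dimensional Perron eigenspace $\langle z^{(1)},z^{(2)}\rangle$ produces the effective matrix $\bigl(\begin{smallmatrix}0&c\\c&0\end{smallmatrix}\bigr)$ with $c=\sum_{uv\in F}z^{(1)}_uz^{(2)}_v>0$, so $\lambda_2(G)=\lambda_1(K_1\vee P_{q-1})-c+o(c)$; equivalently the exact bound $\lambda_2(G)\le\lambda_1(K_1\vee P_{q-1})-\tfrac{2}{\|x\|^2}\sum_{uv\in F}|x_ux_v|$ holds, with $x$ restricted to either fan lying within $o(1)$ of that fan's Perron vector. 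Each term $z^{(1)}_uz^{(2)}_v\ge(\min_v z_v)^2$, with equality iff $u,v$ both have degree $2$; minimizing $c$ over nonempty $F$ with $|F|\le3$ and $G$ outerplanar then forces $|F|=1$ with the bridge joining two degree-$2$ vertices, i.e.\ $G\cong G^*$, every other configuration having strictly larger $c$ and hence strictly smaller $\lambda_2$. Because $|F|\le3$, this step is ultimately a comparison of a bounded list of explicit graphs --- each being a fan plus boundedly many edges, whose $\lambda_2$ is determined by an explicit transcendental equation obtained by solving the path recurrence for the eigenvector entries --- which also disposes of the few outerplanar configurations with $|F|\ge2$. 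The uniqueness claimed in the theorem follows.

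\textbf{The main obstacle.} The crux is the comparison at scale $1/q$. One needs the runner-up outerplanar spectral radius on $q$ vertices, together with an estimate of $\lambda_1(K_1\vee P_{q-1})-\lambda_2(G^*)$ sharp enough that the runner-up's loss ($\approx 2z_a^2$) strictly beats $G^*$'s slack ($\approx z_a^2$); and in the bridge step the first-order perturbation carries an error of size $O(q^{-3/2})$, comparable to the differences among the competing values of $c$ (moving the bridge one step along a path shifts $c$ by $\Theta(q^{-3/2})$), so that step must be carried to second order or replaced by the explicit eigenvalue equations. The remaining ingredients --- the sign-pattern dichotomy, $|V^{\pm}|=q$, $|F|\le3$, and the reduction to finitely many shapes --- are routine.
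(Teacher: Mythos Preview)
Your approach is genuinely different from the paper's. The paper never invokes the sign-pattern dichotomy $V^{+},V^{-}$ or reduces to Tait--Tobin on each side; instead it first pins down two high-degree vertices $u_1,u_2$ via degree estimates (Lemma~\ref{lem:degrees}), shows $u_1u_2\notin E(G)$, and then encodes $\lambda_2$ as the root of a series $\lambda^2=\sum_{i\ge 0}a_i\lambda^{-i}$ where the $a_i$ are explicit walk counts in $P=G\setminus\{u_1,u_2\}$. The extremal graph is then isolated by comparing the $a_i$ term-by-term against those of $G_0$ (Claims~1--3), using Lemma~\ref{lem:compare}. Your route via Collatz--Wielandt on $x|_{V^{\pm}}$ to get $\lambda_1(G[V^{\pm}])\ge\lambda_2(G)$ and hence $|V^{\pm}|=q$ is clean and avoids the degree bootstrap, which is a genuine gain.

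However, there are two real gaps. First, the ``runner-up'' strengthening of Tait--Tobin that you invoke --- that every outerplanar graph on $q$ vertices other than the fan has $\lambda_1\le\lambda_1(K_1\vee P_{q-1})-(2-o(1))z_a^2$ --- is not in the literature and is not proved here. It is not even clear that $K_1\vee(P_{q-2}\cup P_1)$ is the runner-up: one must also rule out all maximal outerplanar graphs obtained from the fan by diagonal flips, and while a first-order heuristic says those lose $\Theta(q^{-1/2})\gg z_a^2$, turning that into a proof is essentially a stability theorem for Tait--Tobin at scale $1/q$. The paper sidesteps this entirely: its Claim~2 (that $G[N(u_i)]=P_{q-1}$) falls out of the inequality $a_1\le 2|E(G[N(u_1)])|+2|E(G[N(u_2)])|-|E(N(u_1),N(u_2))|$ compared to $a_1(G_0)=2q-5$, with no appeal to a runner-up.

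Second, the bridge step is not actually finished. You correctly note that the first-order shift $c=\sum_{uv\in F}z_u^{(1)}z_v^{(2)}$ has differences of order $q^{-3/2}$ between competing bridge positions, the same order as the perturbation error, so first order does not decide. But your fallback --- ``a bounded list of explicit graphs'' --- is not accurate: even with $|F|=1$ the bridge endpoints range over $\Theta(q)$ positions along each path, so there is a one-parameter family, not a finite list, to compare. The paper handles this with the $a_2$ coefficient (Claim~3): the contribution of the bridge $uv$ to $a_2$ is $1-(d^P_u-1)-(d^P_v-1)$, which is uniquely maximized when both $u$ and $v$ are path-endpoints, and the comparison $a_2\le a_2(G_0)-1$ otherwise is enough to conclude. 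Your framework would need an analogous refinement --- either a second-order perturbation computation tracking the dependence on the bridge position, or the explicit Chebyshev-type recurrence you allude to --- and that work remains to be done.
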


\begin{corollary}\label{cor:c1}
   For $n=2q$ even and sufficiently large, among all (not necessarily connected) outerplanar graphs on $n$ vertices, the maximum $\lambda_2$ is uniquely achieved by $2(K_1\vee P_{q-1})$.
\end{corollary}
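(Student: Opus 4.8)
The plan is to leverage Theorem~\ref{thm:t2} together with the fact that for a disconnected graph $\lambda_2$ is governed by the spectral radii of its two ``largest'' components, the only real subtlety being to separate $2(K_1\vee P_{q-1})$ from the connected extremal graph $F:=(K_1\vee P_{q-1})\!\!-\!\!(K_1\vee P_{q-1})$ of Theorem~\ref{thm:t2}. Set $\mu:=\lambda_1(K_1\vee P_{q-1})$; since $2(K_1\vee P_{q-1})$ consists of two isomorphic components, $\mu$ is an eigenvalue of multiplicity two and $\lambda_2\big(2(K_1\vee P_{q-1})\big)=\mu$.

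The crucial step is the strict inequality $\lambda_2(F)<\mu$. I would prove it by exploiting the automorphism $\sigma$ of $F$ that swaps the two fans and interchanges the two endpoints of the bridge: $\sigma$ commutes with $A(F)$ and $\sigma^2=\mathrm{id}$, so the eigenvectors of $A(F)$ may be chosen $\sigma$-symmetric or $\sigma$-antisymmetric, and the restriction of such an eigenvector to one copy of $K_1\vee P_{q-1}$ is an eigenvector of $A(K_1\vee P_{q-1})+ee^\top$ in the symmetric case and of $A(K_1\vee P_{q-1})-ee^\top$ in the antisymmetric case, where $e$ is the indicator vector of the bridge endpoint. Hence the spectrum of $A(F)$ is the union of the spectra of $A(K_1\vee P_{q-1})\pm ee^\top$. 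Since $A(K_1\vee P_{q-1})-ee^\top\preceq A(K_1\vee P_{q-1})$ and the Perron vector of $A(K_1\vee P_{q-1})$ is positive at the bridge endpoint, $\lambda_1\big(A(K_1\vee P_{q-1})-ee^\top\big)<\mu$. For the other summand, $\mu$ is a simple eigenvalue of $A(K_1\vee P_{q-1})$, and because the Perron vector is not proportional to $e$, the vector $e$ has a nonzero component along some non-Perron eigenvector; analyzing the secular equation of the rank-one perturbation $A(K_1\vee P_{q-1})+ee^\top$ then gives $\lambda_2\big(A(K_1\vee P_{q-1})+ee^\top\big)<\mu$. Taking the maximum of the two bounds yields $\lambda_2(F)<\mu$.

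Next I would record the monotonicity $\lambda_{2,max}(m)\le\lambda_{2,max}(m+1)$, obtained by attaching a pendant vertex to an extremal connected outerplanar graph on $m$ vertices (the result is connected outerplanar on $m+1$ vertices and contains the old graph as an induced subgraph, so Cauchy interlacing applies). With Theorem~\ref{thm:t2} this gives $\lambda_{2,max}(m)\le\lambda_{2,max}(n)=\lambda_2(F)<\mu$ for every $m\le n$. Now suppose $G$ is an outerplanar graph on $n$ vertices with $\lambda_2(G)\ge\mu$. If $G$ is connected then $\lambda_2(G)\le\lambda_{2,max}(n)<\mu$, a contradiction, so $G$ is disconnected; pick components $H_1,H_2$ realizing the two largest spectral radii, with $\lambda_1(H_1)\ge\lambda_1(H_2)$ and $|V(H_i)|=n_i$, so that $\lambda_2(G)=\max\big(\lambda_2(H_1),\lambda_1(H_2)\big)$ (we may assume each $H_i$ is nontrivial). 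Since $\lambda_2(H_1)\le\lambda_{2,max}(n_1)<\mu$, we must have $\lambda_1(H_2)\ge\mu$. If $n_2>q$ then $n_1<q$ and, by the theorem of Tait--Tobin and Lin--Ning~\cite{TT2017, Lin-Ning19} that the fan uniquely maximizes $\lambda_1$ among outerplanar graphs, $\lambda_1(H_2)\le\lambda_1(H_1)\le\lambda_1(K_1\vee P_{n_1-1})<\mu$, which is impossible; hence $n_2\le q$, and $\mu\le\lambda_1(H_2)\le\lambda_1(K_1\vee P_{n_2-1})\le\mu$ forces $n_2=q$ and $H_2\cong K_1\vee P_{q-1}$. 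Then $n_1\le n-n_2=q$ and $\mu=\lambda_1(H_2)\le\lambda_1(H_1)\le\lambda_1(K_1\vee P_{n_1-1})\le\mu$ forces $n_1=q$ and $H_1\cong K_1\vee P_{q-1}$, so $G$ has exactly these two components and $G\cong 2(K_1\vee P_{q-1})$. Since the latter has $\lambda_2=\mu$, this proves that $2(K_1\vee P_{q-1})$ is the unique maximizer.

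I expect the main obstacle to be the strict inequality $\lambda_2(F)<\mu$: the non-strict bound $\lambda_2\big(A(K_1\vee P_{q-1})+ee^\top\big)\le\mu$ is immediate from Weyl's inequality, but eliminating equality needs the precise information that a specific coordinate of the Perron vector of $A(K_1\vee P_{q-1})$ does not vanish together with a careful (though elementary) discussion of the secular equation, with some bookkeeping to handle possible multiplicities in the lower part of the spectrum of $A(K_1\vee P_{q-1})$. The remaining ingredients---the description of $\lambda_2$ for disconnected graphs, monotonicity of $\lambda_{2,max}$, and the known extremal result for $\lambda_1$---are routine.
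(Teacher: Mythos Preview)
Your approach is correct and genuinely different from the paper's. The paper establishes $\lambda_2(F)<\mu$ by directly comparing the series expansions computed in Lemma~\ref{lem:specradfangph} and Lemma~\ref{lem:lamda2even}: the two quantities agree through the $\frac{1}{2\sqrt{q-1}}$ term and differ first at the $\frac{1}{q-1}$ term, with $\mu$ having coefficient $-1$ and $\lambda_2(F)$ having coefficient $-\tfrac32$, so $\mu>\lambda_2(F)$ for large $q$. Your route via the involution $\sigma$ and the rank-one perturbations $A(K_1\vee P_{q-1})\pm e_\ell e_\ell^\top$ is more structural and yields the strict inequality for \emph{every} $q$, not just sufficiently large ones. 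One small correction: the condition that forces $\lambda_2\big(A+e_\ell e_\ell^\top\big)<\mu$ strictly is $\langle e_\ell,v_1\rangle\neq 0$ (equivalently, the Perron entry at $\ell$ is nonzero), not the condition you stated that $e_\ell$ has a component along a non-Perron eigenvector; indeed, if $\mu$ is simple and $\langle e_\ell,v_1\rangle\neq 0$ then $\mu$ is not an eigenvalue of $A+e_\ell e_\ell^\top$, which combined with the Weyl bound $\lambda_2(A+e_\ell e_\ell^\top)\le\mu$ gives strictness immediately.

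Your treatment of the disconnected case is also more thorough than the paper's: the paper only argues that the disconnected maximum is $\mu>\lambda_2(F)$ and leaves the uniqueness of $2(K_1\vee P_{q-1})$ implicit, whereas you spell out why both relevant components must be exactly $K_1\vee P_{q-1}$ and why there can be no further components. Two minor points of care: when you invoke $\lambda_1(H_i)\le\lambda_1(K_1\vee P_{n_i-1})$ you should either restrict to $n_i\ge 7$ (where Lin--Ning applies) and handle the finitely many smaller $n_i$ by the crude bound $\lambda_1(H_i)\le n_i-1<\mu$, or replace the comparison by the monotone quantity $\max_{\text{outerplanar on }m}\lambda_1$; and strict monotonicity of $\lambda_1(K_1\vee P_{m-1})$ in $m$ (used to force $n_1=n_2=q$) should be justified by noting that $K_1\vee P_{m-1}$ is a proper subgraph of $K_1\vee P_{m}$.
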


\begin{remark}
    For $n=12$, $\lambda_{2, max}$ is not achieved by $(K_1\vee P_{5})\!\!-\!\!(K_1\vee P_{5})$. Instead, it is achieved by the graph in Figure~\ref{fig:12vertexextremal}. We further conjecture that for all even $n\geq 14$, the connected outerplanar graph maximizing $\lambda_2$ is unique and isomorphic to the graph $(K_1\vee P_{q-1})\!\!-\!\!(K_1\vee P_{q-1})$, where $n=2q$. 
    
    As some evidence for this conjecture, note that deleting the bridge in the graph in Figure~\ref{fig:12vertexextremal} leaves two copies of the outerplanar graph on $6$ vertices with maximum spectral radius, whereas Lin and Ning~\cite{Lin-Ning19} showed that the outerplanar graph with maximum spectral radius on $n$ vertices for any $n\ge 7$ is $K_1\vee P_{n-1}$. 
\end{remark}

\begin{figure}
\begin{center}
     \resizebox{10cm}{!}{\begin{tikzpicture}[scale=1, Wvertex/.style={circle, draw=black, fill=white, scale=1}, bvertex/.style={circle, draw=black, fill=black, scale=0.3}]

\node [bvertex] (u0) at (-1, 0) {};
\node [bvertex] (u1) at (-2.5, 1) {};
\node [bvertex] (u2) at (-1.5, 1) {};
\node [bvertex] (u3) at (-0.5, 1) {};
\node [bvertex] (u4) at (0.5, 1) {};

\node [bvertex] (v0) at (4, 0) {};
\node [bvertex] (v1) at (2.5, 1) {};
\node [bvertex] (v2) at (3.5, 1) {};
\node [bvertex] (v3) at (4.5, 1) {};
\node [bvertex] (v4) at (5.5, 1) {};

\node [bvertex] (c0) at (-1, 1.5) {}; 
\node [bvertex] (c2) at (4, 1.5) {};

\draw (u0) -- (u1);
\draw (u0) -- (u2);
\draw (u0) -- (u3);
\draw (u0) -- (u4);

\draw (v0) -- (v1);
\draw (v0) -- (v2);
\draw (v0) -- (v3);
\draw (v0) -- (v4);

\draw (u1) -- (u2);
\draw (u3) -- (u2); 
\draw (u4) -- (u3);

\draw (v1) -- (v2);
\draw (v3) -- (v2); 
\draw (v4) -- (v3);

\draw (c0) -- (c2);

\draw (c0) -- (u2); 
\draw (c0) -- (u3); 

\draw (c2) -- (v2); 
\draw (c2) -- (v3); 

\end{tikzpicture}}
\end{center}
\caption{The outerplanar graph on $12$ vertices with maximum $\lambda_2$.}
\label{fig:12vertexextremal}
\end{figure}
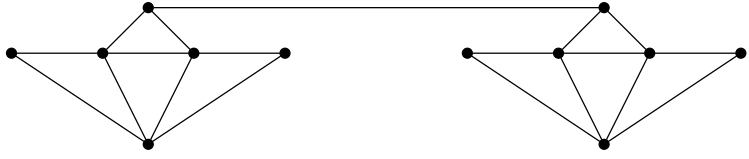

\begin{theorem} \label{thm:t3}
 For $n$ odd and sufficiently large, among all connected outerplanar graphs on $n=2q+1$ vertices, the graphs $G$ maximizing $\lambda_2$ have the following structure: $G$ has a unique cut vertex $u$ and $G\setminus\{u\}$ consists of two disjoint copies of the fan graph $K_1\vee P_{q-1}$. Moreover, for all of these graphs $G$, $\lambda_2(G)=\lambda_1(K_1\vee P_{q-1})$.     
\end{theorem}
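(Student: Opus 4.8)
The plan is to pin down the extremal value first and then read off the structure from the equality case; pleasantly, Theorem~\ref{thm:t1} is not actually needed, and the whole argument rests on the Tait--Tobin/Lin--Ning theorem that the fan maximizes the spectral radius among outerplanar graphs. For the lower bound $\lambda_{2,max}(2q+1)\ge\lambda_1(K_1\vee P_{q-1})$, take any connected outerplanar graph $G_0$ on $2q+1$ vertices that contains $2(K_1\vee P_{q-1})$ as an induced subgraph (e.g.\ glue a new vertex $u$ to one vertex of each copy of $K_1\vee P_{q-1}$; this is clearly outerplanar and connected). The spectrum of $2(K_1\vee P_{q-1})$ is that of $K_1\vee P_{q-1}$ with every multiplicity doubled, so $\lambda_2(2(K_1\vee P_{q-1}))=\lambda_1(K_1\vee P_{q-1})$, and Cauchy interlacing under deletion of one vertex gives $\lambda_2(G_0)\ge\lambda_2(G_0\setminus\{u\})=\lambda_1(K_1\vee P_{q-1})$.

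For the matching upper bound, let $G$ be an arbitrary connected outerplanar graph on $n=2q+1$ vertices; I will show $\lambda_2(G)\le\lambda_1(K_1\vee P_{q-1})$. Assume $\mu:=\lambda_2(G)>0$ (otherwise there is nothing to prove) and let $x$ be a $\mu$-eigenvector. Since $x$ is orthogonal to the positive Perron vector of the connected graph $G$, both $V^+=\{v:x_v>0\}$ and $V^-=\{v:x_v<0\}$ are nonempty. For $v\in V^+$ the eigenvalue equation gives $\mu x_v=\sum_{w\sim v}x_w\le\sum_{w\sim v,\,w\in V^+}x_w$, so $y:=x|_{V^+}\ge 0$ satisfies $A(G[V^+])\,y\ge\mu y$ entrywise, whence $\lambda_1(G[V^+])\ge y^\top A(G[V^+])y/(y^\top y)\ge\mu$; symmetrically $\lambda_1(G[V^-])\ge\mu$. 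Now $G[V^+]$ and $G[V^-]$ are outerplanar, so by the spectral-radius theorem an outerplanar graph on $m$ vertices has $\lambda_1\le\lambda_1(K_1\vee P_{m-1})$ whenever $m\ge7$, while for $m\le6$ its $\lambda_1$ is bounded by an absolute constant. Writing $a=|V^+|\le b=|V^-|$, so $a\le q$, we conclude (for $n$ large, so that the small-$m$ case is dominated) that $\mu\le\lambda_1(K_1\vee P_{a-1})\le\lambda_1(K_1\vee P_{q-1})$, using that $m\mapsto\lambda_1(K_1\vee P_{m-1})$ is strictly increasing. Together with the lower bound this yields $\lambda_{2,max}(2q+1)=\lambda_1(K_1\vee P_{q-1})$.

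Finally I would run the same computation for an extremal $G$, i.e.\ with $\mu=\lambda_1(K_1\vee P_{q-1})$, and track the equality cases. Strict monotonicity forces $a=q$, hence $b\in\{q,q+1\}$ and $|V^0|:=|\{v:x_v=0\}|=q+1-b\in\{0,1\}$. The equality $\lambda_1(G[V^+])=\lambda_1(K_1\vee P_{q-1})$ for an outerplanar graph on $q\ge7$ vertices forces $G[V^+]\cong K_1\vee P_{q-1}$ by the \emph{uniqueness} clause of Lin--Ning, and equality in the Rayleigh quotient forces $y$ to be the Perron vector of $G[V^+]$; feeding this back into the eigenvalue equation at each $v\in V^+$ forces $\sum_{w\sim v,\,w\in V^-}x_w=0$, i.e.\ there are no $V^+$--$V^-$ edges. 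If $|V^0|=0$ this disconnects $G$, a contradiction; hence $|V^0|=1$, $b=q$, and the same reasoning gives $G[V^-]\cong K_1\vee P_{q-1}$. Writing $V^0=\{u\}$ we get $G\setminus\{u\}=G[V^+]\sqcup G[V^-]\cong 2(K_1\vee P_{q-1})$. Connectedness of $G$ forces $u$ to have a neighbor on each side, so $u$ is a cut vertex; and since each $K_1\vee P_{q-1}$ is $2$-connected, deleting any other cut vertex would separate off the $(q-1)$-vertex remainder of one fan and thus leave components of sizes $q-1$ and $q+1$, so $u$ is the unique vertex whose removal yields $2(K_1\vee P_{q-1})$. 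Conversely, for every graph $G$ of this form, $\lambda_2(G)\ge\lambda_1(K_1\vee P_{q-1})$ by the interlacing argument of the first paragraph and $\lambda_2(G)\le\lambda_1(K_1\vee P_{q-1})$ by the upper bound, so $\lambda_2(G)=\lambda_1(K_1\vee P_{q-1})$ and $G$ is extremal, completing the characterization.

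I expect the only delicate point to be the equality analysis, where one must use \emph{uniqueness} (not merely extremality) of the fan for the outerplanar spectral radius --- which is exactly where ``$n$ sufficiently large'' enters --- and be slightly careful when $\lambda_2(G)$ is not a simple eigenvalue, in which case one chooses the eigenvector $x$ (equivalently, the splitting $V^+,V^-$) that makes the argument go through.
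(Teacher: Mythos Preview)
Your proof is correct and considerably more elegant than the paper's. The paper proceeds by first invoking the general machinery of Section~3 to locate two high-degree vertices $u_1,u_2$, then sets up the power-series expansion $\lambda^2=\sum_i a_i\lambda^{-i}$ from equations~\eqref{eq:gamma1}--\eqref{eq:gamma}, and finally does a four-way case split on $(d_{u_1},d_{u_2},|N(u_1)\cap N(u_2)|)$, comparing term-by-term against the target series to force a cut vertex $u$ with $G\setminus\{u\}=2(K_1\vee P_{q-1})$. You bypass all of this by observing directly that for any $\lambda_2$-eigenvector $x$ one has $\lambda_2\le\lambda_1(G[V^+])$ and $\lambda_2\le\lambda_1(G[V^-])$, and then applying the Tait--Tobin/Lin--Ning theorem to the smaller side; the equality analysis then falls out immediately from the uniqueness clause of Lin--Ning together with the observation that equality in $A(G[V^+])y\ge\mu y$ kills all $V^+$--$V^-$ edges.

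What your approach buys is a short, conceptual, computation-free proof of Theorem~\ref{thm:t3}. What it \emph{costs} is generality: your argument exploits the coincidence, special to odd $n$, that the extremal value of $\lambda_2$ equals the spectral radius of a fan on exactly $q$ vertices, so that the Lin--Ning uniqueness clause can be invoked verbatim. For even $n$ (Theorem~\ref{thm:t2}) the extremal $\lambda_2$ is strictly smaller than $\lambda_1(K_1\vee P_{q-1})$, and for the $2$-connected case (Theorem~\ref{thm:t4}) there is no cut vertex to delete, so in those settings your $V^\pm$ argument only yields the (non-sharp) upper bound $\lambda_2\le\lambda_1(K_1\vee P_{q-1})$; determining the extremal graph there genuinely requires the paper's series-comparison machinery. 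One minor remark: the literal phrase ``unique cut vertex'' in the statement is slightly imprecise (if $u$ has a single neighbor on one side, that neighbor is also a cut vertex), and what you actually prove --- that $u$ is the unique vertex whose deletion yields $2(K_1\vee P_{q-1})$ --- is the intended and correct reading.
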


Note that the graphs in Theorem~\ref{thm:t3} also maximize $\lambda_2$ over all (not necessarily connected) outerplanar graphs on $n=2q+1$ vertices, with $n$ sufficiently large. The extremal graphs $G$ have the same structure, but do not need to be connected.

We also determine the maximum $\lambda_2$ among all 2-connected outerplanar graphs on $n$ vertices.
\begin{theorem}\label{thm:t4}
    Among all 2-connected outerplanar graphs on $n$ vertices, for $n$ sufficiently large, the maximum $\lambda_2$ is obtained by the graph in Figure~\ref{fig:2conn}, denoted by 
    $(K_1\vee P_{\lfloor \frac{n}{2}\rfloor-1}) \diamond (K_1\vee P_{\lceil \frac{n}{2}\rceil-1})$,
    which is constructed by gluing a fan graph $K_1\vee P_{\lfloor \frac{n}{2}\rfloor-1}$ and another fan graph  $K_1\vee P_{\lceil \frac{n}{2}\rceil-1}$ by connecting the first vertex on the path $P_{\lfloor \frac{n}{2}\rfloor-1}$ to the second vertex on the path $P_{\lceil \frac{n}{2}\rceil-1}$,  and the second vertex on the path $P_{\lfloor \frac{n}{2}\rfloor-1}$ to the first vertex on the path $P_{\lceil \frac{n}{2}\rceil-1}$.
    \begin{figure}[h]
        \begin{center}
     \resizebox{10cm}{!}{\begin{tikzpicture}[scale=1, Wvertex/.style={circle, draw=black, fill=white, scale=1}, bvertex/.style={circle, draw=black, fill=black, scale=0.3}]

\node [bvertex] (u0) at (-0.5, -0.5) {};
\node [bvertex] (u1) at (-3, 1) {};
\node [bvertex] (u2) at (-2, 1) {};
\node [bvertex] (u3) at (-1, 1) {};
\node [bvertex] (u4) at (0, 1) {};
\node [bvertex] (u5) at (1, 1) {};
\node [bvertex] (u6) at (2, 1) {};

\node [bvertex] (v0) at (3.5, 2.5) {};
\node [bvertex] (v1) at (1, 2) {};
\node [bvertex] (v2) at (2, 2) {};
\node [bvertex] (v3) at (3, 2) {};
\node [bvertex] (v4) at (4, 2) {};
\node [bvertex] (v5) at (5, 2) {};
\node [bvertex] (v6) at (6, 2) {};

\draw (u0) -- (u1);
\draw (u0) -- (u2);
\draw (u0) -- (u3);
\draw (u0) -- (u4);
\draw (u0) -- (u5);
\draw (u0) -- (u6);

\draw (u1) -- (u2);
\draw (u3) -- (u2); 
\draw [dashed] (u4) -- (u3) node [midway, above=3pt] {$...$};
\draw (u5) -- (u4);
\draw (u6) -- (u5);

\draw (v0) -- (v1);
\draw (v0) -- (v2);
\draw (v0) -- (v3);
\draw (v0) -- (v4);
\draw (v0) -- (v5);
\draw (v0) -- (v6);

\draw (v1) -- (v2);
\draw (v3) -- (v2); 
\draw [dashed] (v4) -- (v3) node [midway, below=3pt] {$...$};
\draw (v5) -- (v4);
\draw (v6) -- (v5);

\draw (u6) -- (v2);
\draw (u5) -- (v1);
\end{tikzpicture}}
     \end{center} 
     \caption{The graph  $(K_1\vee P_{\lfloor \frac{n}{2}\rfloor-1})\diamond(K_1\vee P_{\lceil \frac{n}{2}\rceil-1})$.}
     \label{fig:2conn}
     \end{figure}
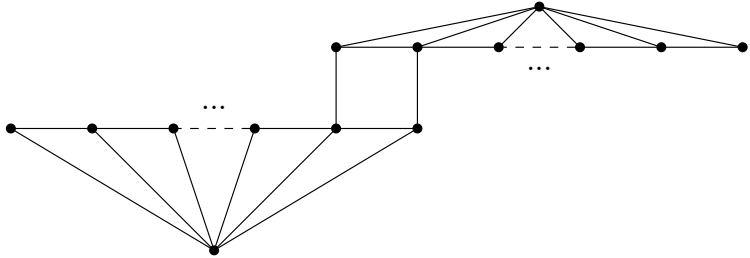
\end{theorem}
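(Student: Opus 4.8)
The plan is to follow the scheme that proves Theorem~\ref{thm:t2}, inserting the $2$-connectivity hypothesis only where the link between the two halves is determined. For the lower bound, write $F_i=K_1\vee P_{m_i-1}$ with $m_1=\lfloor n/2\rfloor$, $m_2=\lceil n/2\rceil$, let $\phi_i$ be the unit Perron eigenvector of $F_i$, and let $D$ be the graph of Figure~\ref{fig:2conn}. On the $2$-dimensional subspace spanned by the extensions of $\phi_1,\phi_2$ by zeros, the Rayleigh quotient of $A(D)$ is at least $\min\{\lambda_1(F_1),\lambda_1(F_2)\}-O(1/n)$, because the only cross-block contributions come from the two joining edges, whose four endpoints are path vertices (not centers) of the two fans and hence carry Perron weight $O(1/\sqrt n)$. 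By Courant--Fischer, $\lambda_2(D)\ge\sqrt{n/2}+1-O(1/\sqrt n)$; comparing with Theorem~\ref{thm:t1}, the maximum of $\lambda_2$ over $2$-connected outerplanar graphs on $n$ vertices is $\sqrt{n/2}+1+O(1/\sqrt n)$, and (re-running the proof of Theorem~\ref{thm:t1} for graphs whose $\lambda_2$ is this close to optimal) every maximizer $G$ has exactly two vertices $u,w$ of degree $n/2\pm O(1)$ and bounded degree elsewhere.

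\emph{Skeleton.} As in the proof of Theorem~\ref{thm:t2}: normalizing the $\lambda_2$-eigenvector $x$ to $\|x\|_\infty=1$, the eigenequation gives $|x_v|=O(1/\sqrt n)$ for $v\notin\{u,w\}$, so $x$ concentrates on $u,w$, and one shows $x_u,x_w$ have opposite signs and are bounded away from $0$. As $G$ is $K_{2,3}$-minor-free, $u$ and $w$ have at most two common neighbours, so $N[u]$ and $N[w]$ cover all but $O(1)$ vertices with $O(1)$ overlap; as $G$ is $K_4$-minor-free, $G[N(u)]$ and $G[N(w)]$ are linear forests, so $G[N[u]]$ and $G[N[w]]$ are sub-fans. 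Local moves that preserve outerplanarity and do not decrease $\lambda_2$ — merging two path-components of a sub-fan, relocating a low-weight vertex into the larger fan, absorbing a stray low-degree vertex — reduce $G$, up to $O(1)$ slack vertices, to two full fans $K_1\vee P_{m_1-1}$ and $K_1\vee P_{m_2-1}$ with $m_1+m_2=n$, joined by an edge set $S$ between their path parts; these moves are justified by the unimodality of $x$ along each fan-path, exactly as in Theorem~\ref{thm:t2}.

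\emph{Determining the link.} If $|S|\le1$ then $G$ has a bridge or a cut vertex, contradicting $2$-connectivity, so $|S|\ge2$; on the other hand each edge of $S$ joins vertices of opposite sign and so contributes a negative term to $x^{\top}A(G)x$, while $K_{2,3}$-minor-freeness confines the endpoints of $S$ to a bounded window at the ends of the two paths, forcing $|S|=2$ with both edges at the path-ends. This leaves finitely many graphs, indexed by the split $m_1+m_2=n$ and by whether the two path-ends are joined in the ``parallel'' or ``crossing'' pattern. When $n=2m$, each candidate has a $\mathbb{Z}_2$-symmetry exchanging the halves, and its second eigenvalue is the top eigenvalue of the folded matrix on a single half: for the crossing pattern this folded matrix is $A(K_1\vee P_{m-1})$ with the last path-edge $pp'$ deleted, and for the parallel pattern it is the same matrix with an additional $(e_p-e_{p'})(e_p-e_{p'})^{\top}$ subtracted; since this extra term is positive semidefinite and the Perron vector of the crossing-folded matrix takes different values at $p$ and $p'$, the crossing pattern has strictly larger $\lambda_2$. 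An analogous (asymmetric) perturbation analysis, or a rotation argument moving a path-end vertex from the larger fan to the smaller, shows the balanced split maximizes $\lambda_2$; this yields the graph of Figure~\ref{fig:2conn}, and uniqueness when $n$ is even.

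\emph{Main obstacle.} The delicate step is the skeleton reduction: $\lambda_2$ is not monotone under edge addition, so every local move must be accompanied by a sign/unimodality analysis of the second eigenvector (and its interaction with the first), and one must rule out the many near-extremal configurations — a stray low-degree vertex, a short extra path off a center, an unbalanced split, the parallel joining pattern — that lose only $o(1)$ in $\lambda_2$. Incorporating $2$-connectivity is easy once the two-fan skeleton is fixed; the real work, as for Theorem~\ref{thm:t2}, is bookkeeping the $O(1)$ slack vertices and controlling the eigenvector near the junction.
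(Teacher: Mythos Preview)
Your outline has the right coarse structure (two high-degree vertices, two fans, a short link), but there is a genuine gap in the ``skeleton'' step, and it stems from a misreading of the paper. You repeatedly say the reduction to two full fans joined at the path-ends proceeds ``exactly as in Theorem~\ref{thm:t2}'' via local moves and unimodality of the second eigenvector along the fan paths. The paper does not argue that way. Theorem~\ref{thm:t2} (and likewise the $2$-connected Theorem~\ref{thm:t4}) is proved by writing $\lambda_2$ as the root of an equation $\lambda^2=\sum_{i\ge 0}a_i\lambda^{-i}$, where the $a_i$ are explicit signed walk counts in $G\setminus\{u_1,u_2\}$ (equation~\eqref{eqn:aieqn}), computing the corresponding series $f(\lambda)$ for the target graph $G_0$, and then applying the comparison Lemma~\ref{lem:compare}: if $g(\lambda)<f(\lambda)$ on a short interval around $\sqrt{n/2}+1$, then $\lambda_2(G)<\lambda_2(G_0)$. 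Each structural conclusion (the partition $N(u_1)\sqcup N(u_2)=V(P)$, both $G[N(u_i)]$ being paths, the number and placement of crossing edges) is obtained by showing that its failure depresses some coefficient $a_i$ by at least $1$, which is enough to force $g<f$ once later coefficients are controlled by the crude bound $a_i\le 2^iq+O(1)$. Your ``local move'' plan would have to manufacture all of this precision from scratch, and you yourself flag that $\lambda_2$ is not monotone under the moves you propose; the paper's walk-count/series method is precisely what replaces that missing monotonicity.

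Two specific places where your argument as written would fail: (i) ``each edge of $S$ contributes a negative term to $x^{\top}A(G)x$'' does not bound $|S|$, because $x$ is not the Perron vector --- lowering the Rayleigh quotient at $x$ says nothing about $\lambda_2$ of a modified graph, and the paper instead rules out $|S|=3$ by computing $\eta-c^2$ in the $a_2$ coefficient; (ii) $K_{2,3}$-minor-freeness alone does not confine the endpoints of $S$ to the path ends --- the paper pins down the degree pattern $1,1,2,2$ by bounding the contribution of the crossing edges to $a_2$ (Claim~3 in the even case, Claim~7 in the odd case).

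That said, your folded-matrix idea for the final even-case comparison is correct and is a clean alternative to the paper's coefficient-by-coefficient calculation. With the $\mathbb{Z}_2$ symmetry, the antisymmetric quotient of the crossing pattern is $A(K_1\vee P_{q-1})-e_{p_1}e_{p_2}^{\top}-e_{p_2}e_{p_1}^{\top}$ and of the parallel pattern is $A(K_1\vee P_{q-1})-e_{p_1}e_{p_1}^{\top}-e_{p_2}e_{p_2}^{\top}$; their difference is $-(e_{p_1}-e_{p_2})(e_{p_1}-e_{p_2})^{\top}\preceq 0$, and strict inequality of the top eigenvalues follows since the Perron vector of the crossing quotient has $x_{p_1}\ne x_{p_2}$. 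The paper instead distinguishes $G_0$ from $G_0'$ by tracking one extra negative $3$-walk in $a_3$; your argument is more conceptual for that single step, but you still need the series-comparison machinery (or a genuine substitute) to get down to those two candidates.
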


The paper is organized as follows. In Section 2, we give necessary definitions and lemmas. In Section 3, we study the coarse structure of the outerplanar graphs which have the maximum $\lambda_k$ and prove Theorem~\ref{thm:t1}. In Section 4, we give the proofs of Theorems \ref{thm:t2} and \ref{thm:t3}. In Section 5, we prove Theorem~\ref{thm:t4}. We conclude in Section 6 with remarks and open problems. 

\section{Notation and lemmas}

For a given graph $G = (V,E)$, the \emph{neighborhood} of a vertex $v \in V$, denoted $N(v)$, is the set of all vertices adjacent to $v$. The \emph{closed neighborhood} of a vertex $v \in V$, denoted $N[v]$, is the set $N(v) \cup \{v\}$. The \emph{degree} of a vertex $v \in V$, denoted $d_v$, is $|N(v)|$. If $G'$ is a subgraph of $G$, then $d_v^{G'}$ is the degree of a vertex $v\in V$ in the subgraph $G'$. We use the notation $G[U]$ for the induced subgraph of $G$ on the vertex set $U\subset V$. If $A, B\subset V$ and $A\cap B = \emptyset$, then $E(A, B)$ is the set of edges in $E$ with one vertex in $A$ and one vertex in $B$. A \emph{walk} in a graph $G$ is a sequence of vertices $v_0v_1 \dots v_{k}$ where $v_iv_{i+1} \in E$ for all $0\leq i \leq k-1$. A \emph{path} in a graph $G$ is a walk where the vertices are distinct. A \emph{linear forest} is a disjoint union of paths. The \emph{path graph} $P_n = (V,E)$ is defined by $V = \left\{ v_0, v_1, \dots v_{n-1}\right\}$ and $E = \left\{ v_iv_{i+1} : 0\leq i \leq n-2\right\}$.


Let us go over some important tools from linear algebra (see~\cite{S10} for additional background on matrix theory).  Assume $A$ is a $n\times n$ real symmetric matrix (or Hermitian matrix in general).  Then the eigenvalues of $A$ are all real, and we may label them in non-increasing order as 
$$\lambda_1 \ge \lambda_2 \ge \cdots \ge  \lambda_n.$$



Recall that a sequence $\mu_1 \geq \dotsm \geq \mu_m$ is said to interlace a sequence $\lambda_1 \geq \dotsm \geq \lambda_n$ with $m<n$ when $\lambda_i \geq \mu_i \geq \lambda_{n-m+i}$ for $i = 1, \ldots, m$. A corollary of Cauchy's Interlacing Theorem states that if $B$ is a principal submatrix of a symmetric matrix $A$, then the eigenvalues of $B$ interlace the eigenvalues of $A$ \cite{H1995}. In particular, the eigenvalues of a proper induced subgraph $H$ of $G$ interlace the eigenvalues of $G$. Throughout this paper, we use $\bz$ for an eigenvector corresponding to $\lambda_1$ and $\bx$ for an eigenvector corresponding to $\lambda_2$. 


Tait and Tobin \cite{TT2017} proved that for sufficiently large $n$, the maximum spectral radius of an outerplanar graph is uniquely achieved by the fan graph $K_1\vee P_{n-1}$. Thus, we have
 $$\lambda_{1,max}=\lambda_1(K_1\vee P_{n-1}).$$

We need an estimate for the size of the largest eigenvalue $\lambda_1$ of the fan graph $K_1 \vee P_{n-1}$. We use a series expansion proven in~\cite{LLLW2022}. 

\begin{lemma}\label{lem:specradfangph}
For a positive integer $n$, let $G$ be the graph $K_1 \vee P_{n-1}$. The largest eigenvalue $\lambda_1$ of $G$ satisfies
\begin{equation}
   \lambda_1 = \sqrt{n-1}+ 1 + \frac{1}{2\sqrt{n-1}} -\frac{1}{n-1} - \frac{1}{8(n-1)^{3/2}} - \frac{7}{16(n-1)^{5/2}} + O\left(\frac{1}{(n-1)^3}\right).
   \label{eq:lambda1_fan_graph}
\end{equation}

\end{lemma}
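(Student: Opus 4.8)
The plan is to work directly with the Perron eigenvector of $G=K_1\vee P_{n-1}$, reduce the eigenvalue equation to a single scalar equation for $\lambda\coloneqq\lambda_1$, and solve that equation asymptotically. Write $v_0$ for the hub and $v_1,\dots,v_{m}$ (with $m=n-1$) for the path vertices in order, and let $z_0,z_1,\dots,z_m$ be the corresponding entries of the positive eigenvector $\bz$. Since $\bz$ is unique up to scaling by Perron--Frobenius, the automorphism of $G$ reversing the path forces $z_i=z_{m+1-i}$. Also $K_{1,n-1}$ is a spanning subgraph of $G$, so $\lambda\ge\lambda_1(K_{1,n-1})=\sqrt{n-1}$; hence for large $n$ we have $\lambda>2$ and may put $t=\tfrac12\big(\lambda+\sqrt{\lambda^2-4}\big)$, the root of $t+t^{-1}=\lambda$ with $t>1$, which satisfies $t>\lambda-1\ge\sqrt{n-1}-1$.

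Next I would solve the path recurrence. The eigen-equations at the interior path vertices are $z_{i-1}-\lambda z_i+z_{i+1}=-z_0$ for $2\le i\le m-1$; the shift $\tilde z_i\coloneqq z_i-\frac{z_0}{\lambda-2}$ makes these homogeneous, and a direct check shows that the two endpoint equations are exactly this homogeneous recurrence evaluated at $i=1$ and $i=m$, once one sets the boundary values $\tilde z_0=\tilde z_{m+1}=-\frac{z_0}{\lambda-2}$. Hence $\tilde z_i=At^i+Bt^{-i}$; the symmetry then forces $\tilde z_i=A'\,(t^i+t^{m+1-i})$, and $\tilde z_0=-\frac{z_0}{\lambda-2}$ gives $A'=\frac{-z_0}{(\lambda-2)(1+t^{m+1})}$. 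Substituting this back into the hub equation $\lambda z_0=\sum_{i=1}^{m}z_i$ and summing the resulting geometric series yields the scalar identity
\[
\lambda(\lambda-2)=(n-1)-\frac{2\,(t^{m+1}-t)}{(t-1)\,(1+t^{m+1})}.
\]

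Since $t\ge\sqrt{n-1}-1\to\infty$, the fraction $\frac{t^{m+1}-t}{1+t^{m+1}}=\frac{1-t^{-m}}{1+t^{-(m+1)}}$ equals $1$ up to an error of order $t^{-(n-1)}$, which is smaller than any fixed power of $n$. Using $\frac{2}{t-1}=\sqrt{\frac{\lambda+2}{\lambda-2}}-1$ (obtained by rationalizing the denominator), the identity becomes
\[
\lambda^2 \;=\; (n-1)+2\lambda-\left(\sqrt{\frac{\lambda+2}{\lambda-2}}-1\right)+O\big(t^{-(n-1)}\big),
\]
an implicit equation for $\lambda$ alone whose relevant solution satisfies $\lambda\ge\sqrt{n-1}$. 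It remains to solve this asymptotically: expand $\sqrt{(\lambda+2)/(\lambda-2)}=1+\frac{2}{\lambda}+\frac{2}{\lambda^2}+\frac{4}{\lambda^3}+\cdots$ in powers of $\lambda^{-1}$ and iterate from $\lambda^{(0)}=\sqrt{n-1}+1$ — equivalently, insert the ansatz $\lambda=\sqrt{n-1}+c_0+c_1(n-1)^{-1/2}+c_2(n-1)^{-1}+\cdots$ and match successive powers of $(n-1)^{-1/2}$. Each pass roughly doubles the number of correct coefficients, and carrying it far enough reproduces \eqref{eq:lambda1_fan_graph}; the vanishing $(n-1)^{-2}$ coefficient is merely an arithmetic accident of this matching. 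The only real work is the bookkeeping in this last step — there is no conceptual obstacle once the scalar identity above is in hand, and one may instead simply invoke the computation of~\cite{LLLW2022}. (Alternatively, the same scalar identity also follows from the characteristic-equation form $\lambda=\one^{\top}(\lambda I-A_{P_{n-1}})^{-1}\one$ together with the explicit entries of $(\lambda I-A_{P_{n-1}})^{-1}$ in terms of Chebyshev polynomials, but the eigenvector route is the most direct.)
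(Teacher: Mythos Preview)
Your proof is correct and takes a genuinely different route from the paper's. The paper simply quotes from \cite{LLLW2022} the truncated walk-counting identity
\[
\lambda_1^2 = (n-1) + \frac{2n-4}{\lambda_1} + \frac{4n-10}{\lambda_1^2} + \cdots + O\!\left(\frac{n}{\lambda_1^7}\right),
\]
which is the Neumann expansion $\lambda_1=\one^{\top}(\lambda_1 I-A_{P_{n-1}})^{-1}\one=\sum_{i\ge0}\lambda_1^{-i-1}\one^{\top}A_{P_{n-1}}^{i}\one$ with the walk counts on $P_{n-1}$ written out term by term, and then says the series expansion \eqref{eq:lambda1_fan_graph} follows by expanding the largest root. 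You instead solve the path recurrence in closed form via $t+t^{-1}=\lambda$, arriving at the \emph{exact} scalar identity $\lambda(\lambda-2)=(n-1)-\tfrac{2(t^{m+1}-t)}{(t-1)(1+t^{m+1})}$, which up to an exponentially small correction becomes $\lambda^2=(n-1)+2\lambda-\big(\sqrt{(\lambda+2)/(\lambda-2)}-1\big)$.

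Both equations are satisfied by $\lambda_1$ and both give \eqref{eq:lambda1_fan_graph} upon asymptotic inversion; the difference is in what each buys. Your derivation is self-contained and sharper---the discarded term is $O(t^{-(n-1)})$ rather than $O(n/\lambda_1^{7})$, so in principle one could extract arbitrarily many coefficients without revisiting the source equation. The paper's walk-counting form, on the other hand, is the same machinery that drives the rest of the paper (Lemma~\ref{lem:lamda2even} and all of Section~4), where the graph $P$ is no longer a single path and no closed-form solution of the recurrence is available. Your parenthetical remark that the identity is equivalent to $\lambda=\one^{\top}(\lambda I-A_{P_{n-1}})^{-1}\one$ with the resolvent entries written via Chebyshev polynomials is precisely the bridge between the two approaches.
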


\begin{proof}
    From \cite{LLLW2022}, $\lambda_1$ satisfies the following equation.
\begin{equation*}
    \lambda_1^2 = (n-1) + \frac{2n-4}{\lambda_1}
    + \frac{4n-10}{\lambda_1^2}
       + \frac{8n-24}{\lambda_1^3}
          + \frac{16n-54}{\lambda_1^4} 
          + \frac{32n-120}{\lambda_1^5}
          + \frac{64n-260}{\lambda_1^6}
          +O\left(\frac{n}{\lambda_1^7}\right).
\end{equation*}
Expanding the largest root of the equation above into a series in terms of $n-1$, we get equation \eqref{eq:lambda1_fan_graph}. 
\end{proof}

\begin{lemma}\label{lem:lamda2even}
Let $G$ be the graph $(K_1\vee P_{q-1})\!\!-\!\!(K_1\vee P_{q-1})$. The second largest eigenvalue $\lambda_2$ of $G$ satisfies the series expansion
    \begin{equation*}
        \lambda_2 =  \sqrt{q-1} + 1 + \frac{1}{2\sqrt{q-1}} - \frac{3}{2(q-1)} + \frac{7}{8(q-1)^{3/2}} - \frac{2}{(q-1)^2} + O\left(\frac{1}{(q-1)^{5/2}}\right).
    \end{equation*}
\end{lemma}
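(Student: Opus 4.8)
The plan is to exploit the structural symmetry of $G = (K_1\vee P_{q-1})\!-\!(K_1\vee P_{q-1})$ under the automorphism $\sigma$ that swaps the two fan-copies. Any eigenvector of $A(G)$ can be taken to be either symmetric or antisymmetric with respect to $\sigma$. The antisymmetric eigenvectors vanish on... wait, they need not vanish anywhere; rather, an antisymmetric eigenvector $\bx$ satisfies $\bx(\sigma(v)) = -\bx(v)$, so in particular if $e=\{u,u'\}$ is the bridge joining the two smallest-degree vertices $u,u'=\sigma(u)$, then the eigenvalue equation at $u$ reads $\lambda \bx(u) = \bx(u') + \sum_{w\sim u, w\ne u'}\bx(w) = -\bx(u) + (\text{terms from the first fan})$. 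Thus on the first fan copy $H=K_1\vee P_{q-1}$, the restriction of $\bx$ is an eigenvector of the matrix $A(H) - E_{uu}$ (the adjacency matrix of $H$ with a $-1$ placed at the diagonal entry of $u$); symmetric eigenvectors similarly restrict to eigenvectors of $A(H)+E_{uu}$. Hence the spectrum of $G$ is exactly the union of the spectra of $A(H)\pm E_{uu}$, and $\lambda_2(G)$ is the larger of $\lambda_1(A(H)-E_{uu})$ and $\lambda_2(A(H)+E_{uu})$; I expect $\lambda_1(A(H)-E_{uu})$ to win, since it is a rank-one negative perturbation of $A(H)$ at the \emph{low}-degree vertex and should stay just below $\lambda_1(H)=\lambda_{1,max}(q)$, whereas $\lambda_2(A(H)+E_{uu})$ interlaces below $\lambda_2(A(H))$, which is much smaller. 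A short interlacing/Weyl argument confirms $\lambda_1(A(H)-E_{uu}) > \lambda_2(A(H)+E_{uu})$ for large $q$, so $\lambda_2(G)=\lambda_1(A(H)-E_{uu})$.

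Next I would set up the characteristic-equation / quotient-matrix computation for $\mu := \lambda_1(A(H)-E_{uu})$, mirroring the derivation of Lemma~\ref{lem:specradfangph}. Label the hub of the fan $c$ (degree $q-1$) and the path vertices $v_1,\dots,v_{q-1}$, with $u=v_1$ the vertex carrying the diagonal $-1$. Writing the eigenvalue equations, one eliminates the path values $\bx(v_i)$ via the linear recurrence $\mu\,\bx(v_i) = \bx(v_{i-1}) + \bx(v_{i+1}) + \bx(c)$ (with the boundary modification $\mu\,\bx(v_1) = -\bx(v_1) + \bx(v_2) + \bx(c)$, i.e. $(\mu+1)\bx(v_1)=\bx(v_2)+\bx(c)$), solve the resulting inhomogeneous constant-coefficient recurrence in closed form using the roots $t_\pm$ of $t^2-\mu t+1=0$, and substitute into the equation at the hub $\mu\,\bx(c)=\sum_i \bx(v_i)$. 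This yields a single transcendental equation in $\mu$ and $q$ of the same flavor as the one quoted from \cite{LLLW2022}, but with the boundary term at $v_1$ shifted by the perturbation; collecting it into the form $\mu^2 = (q-1) + (\text{correction series in }1/\mu)$ and then inverting the series in powers of $(q-1)^{-1/2}$ gives the claimed expansion. The perturbation at $u=v_1$ contributes at order $1/\mu^2 \asymp 1/q$, which is exactly why the expansion for $\lambda_2(G)$ first deviates from that for $\lambda_1(K_1\vee P_{n-1})$ in Lemma~\ref{lem:specradfangph} at the $1/(q-1)$ term (here $-\tfrac{3}{2(q-1)}$ versus $-\tfrac{1}{n-1}$).

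An alternative, possibly cleaner route: write $A(H)-E_{uu}$ as a $2\times 2$-block plus path structure and use the standard trick of treating the path $P_{q-1}$ with a pendant weight, expressing the characteristic polynomial via Chebyshev-like polynomials $U_k(\mu/2)$; the eigenvalue condition becomes a ratio of these polynomials equal to a rational function of $\mu$, and since $\mu=\sqrt{q-1}+O(1)$ is large, the Chebyshev polynomials are dominated by $t_+^k$ and the subdominant $t_-^k=t_+^{-k}$ terms are exponentially small in $q$, hence absorbed into the $O((q-1)^{-5/2})$ error. Either way, the main obstacle is purely computational: carrying the series inversion to enough terms (through order $(q-1)^{-2}$) accurately, and bookkeeping the constant shifts in the numerators of the $1/\mu^j$ terms induced by the boundary perturbation. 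I would verify the final five printed terms numerically against $\lambda_2(G)$ for a few moderate values of $q$ before committing to them.
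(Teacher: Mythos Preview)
Your approach is correct but genuinely different from the paper's. To obtain antisymmetry the paper does not invoke the automorphism~$\sigma$; instead it shows $x_{u_1}=-x_{u_2}$ by pairing the $\lambda_2$-eigenvector against the Perron eigenvector and deriving a contradiction from $\langle\bx,\bz\rangle=0$. For the computation, the paper does \emph{not} reduce to the single-fan perturbation $A(H)-E_{uu}$: it deletes \emph{both} hubs $u_1,u_2$, works on the remaining path $P=P_{2q-2}$, and writes the eigen-equation as $\by=(\lambda_2 I-A_P)^{-1}\beta$ with $\beta$ the $\pm1$ indicator of $N(u_1)$ versus $N(u_2)$. Expanding the resolvent as a Neumann series and pairing with $\beta$ gives $2\lambda_2^2=\sum_{i\ge 0}\lambda_2^{-i}\,\beta^{T}A_P^{i}\beta$, and the coefficients $\beta^{T}A_P^{i}\beta$ are computed by walk-counting on $P$ for $i\le 5$; this directly yields the implicit equation \eqref{eqn:lmb2eqn}, from which the stated series is read off. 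Your reduction halves the dimension and is conceptually cleaner, and your Weyl comparison $\lambda_1(A(H)-E_{uu})>\lambda_2(A(H)+E_{uu})$ for large $q$ is valid since the latter is at most $\lambda_2(H)+1<3$. The paper's walk-counting route, on the other hand, is exactly the machinery of \eqref{eq:gamma1}--\eqref{eq:gamma} used later to compare arbitrary candidates against $G_0$, so this lemma doubles as a warm-up and supplies the benchmark function $f(\lambda)$ in the proof of Theorem~\ref{thm:t2}; your perturbed-fan equation would need to be recast into that form to serve the same purpose.
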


\begin{proof}
    Let $n$ be the total number of vertices in $G$, so that $n = 2q$. We assume $n$ (and $q$) are sufficiently large. Let $u_1$ and $u_2$ be the two largest degree vertices in $G$. We first show that $\lambda_2$ is a simple eigenvalue. By Cauchy's Interlacing Theorem, $\lambda_2(G) \ge \lambda_2((K_1\vee P_{q-1}) \cup (K_1\vee P_{q-2})) = \lambda_1(K_1\vee P_{q-2}) = \sqrt{q-2} + 1 + O\left(\frac{1}{\sqrt{q}}\right)$. On the other hand, we also have by Cauchy's Interlacing Theorem that $\lambda_3(G) \le \lambda_1(G\setminus\{u_1, u_2\}) = \lambda_1(P_{2q-2}) < 2$, so $\lambda_2$ is simple. Let $\bf{z}$ and $\bf{x}$ be the simple eigenvectors associated with $\lambda_1$ and $\lambda_2$, respectively. By the Perron-Frobenius Theorem, $\bf{z}$ is strictly positive. Furthermore, since $A(G)$ is real symmetric, $\langle \mathbf{x}, \mathbf{z}\rangle = 0.$ We show $x_{u_1} = -x_{u_2}$. Assume for a contradiction that $x_{u_1} > -x_{u_2}$. Since $\lambda_2 > 0$, we have $\lambda_2(x_{u_1} + x_{u_2}) > 0$, which implies
    \[\sum_{u \in N(u_1)} x_{u} + \sum_{ u \in N(u_2)} x_{u} > 0. \]
    Therefore,
    \[
    \sum_{u \in V(G)} x_{u} > 0.
    \]
    However, 
    \[
    \langle \mathbf{x}, \mathbf{z} \rangle \geq z_{min} \sum_{u \in V(G)} x_{u} > 0, 
    \]
    which contradicts the fact that $\langle{\mathbf{x}, \mathbf{z}\rangle} = 0$. A similar argument yields a contradiction when $x_{u_1} < -x_{u_2}$. We conclude that $x_{u_1} = - x_{u_2}.$

    Let $P$ be the path on $2q-2$ vertices obtained after deleting the two large degree vertices, and let $A_p$ be the adjacency matrix of $P$. Let ${\bf y}$ be the restriction of the eigenvector ${\bf x}$ on the path $P$. 
    Let $\beta$ be a $(2q-2)$-dimensional column vector with entries equal to $1$ on vertices in $N(u_1)$ and equal to $-1$ on vertices in $N(u_2)$. The eigen-equation gives %
    \begin{align*}
    {\bf y} 
    &= (\lambda_2 I -A_p)^{-1}\beta \\
    &= \frac{1}{\lambda_2}(I-\lambda_2^{-1}A_p)^{-1}\beta \\
    &=\frac{1}{\lambda_2} \sum_{i=0}^\infty \lambda_2^{-i}A_p^i\beta \\
    &=\sum_{i=0}^\infty \lambda_2^{-(i+1)}A_p^i\beta.
    \end{align*}

    This series converges because $\lambda_2 > 2 > \lambda_1(A_p)$. Since $\lambda_2(1) = \sum_{i \in N(u_1)} x_i$ and $\lambda_2(-1) = \sum_{i \in N(u_2)} x_i$, we know that $2\lambda_2 = \beta^T{\bf y}$. Therefore, 

    \begin{equation}
    \label{eq:lambda1_lowerBound}
        \lambda_2 = \frac{1}{2} \sum_{i=0}^\infty \lambda_2^{-(i+1)}\beta^TA_p^i\beta.
    \end{equation}
    When $i=0$, we get \[\beta^T\beta = 2q-2.\]
    When $i=1$, we get
    \begin{equation*}
        \beta^TA_p\beta = 2(2q-2)- 2(2)- 2(1)  = 4q-10.
    \end{equation*}
    When $i=2$, we get
    \begin{equation*}
        \beta^TA_p^2\beta = 4(2q-2) - 6(2) - 2(1) = 8q-22.
    \end{equation*}
    Similarly, for $3\le i\le 5$, we obtain
    \begin{align*}
        \beta^TA_p^3\beta &= 16q-56,\\
        \beta^TA_p^4\beta &= 32q-118,\\
        \beta^TA_p^5\beta &= 64q-272.
    \end{align*}
    After multiplying both sides of \eqref{eq:lambda1_lowerBound} by $\lambda_2$ and simplifying, we obtain that $\lambda_2$ is a root of the equation,
    \begin{equation}\label{eqn:lmb2eqn}
        \lambda_2^2 = (q-1) + \frac{2q-5}{\lambda_2} + \frac{4q-11}{\lambda_2^2} + \frac{8q-28}{\lambda_2^3} + \frac{16q-59}{\lambda_2^4} + \frac{32q-136}{\lambda_2^5} + O\left(\frac{q}{\lambda_2^6}\right).
    \end{equation}
    Using SageMath, we get the following series expansion for $\lambda_2$,
    \begin{equation*}
        \lambda_2 = \sqrt{q-1} + 1 + \frac{1}{2\sqrt{q-1}} - \frac{3}{2(q-1)} + \frac{7}{8(q-1)^{3/2}} - \frac{2}{(q-1)^2} + O\left(\frac{1}{(q-1)^{5/2}}\right).
    \end{equation*}
\end{proof}

\begin{corollary}\label{cor:2lb}
Let $q=\lfloor \frac{n}{2}\rfloor$.
Then we have
$$\lambda_{2,max}\geq \sqrt{q-1} +1 + \frac{1}{2\sqrt{q-1}}
+O\left(\frac{1}{q-1}\right).$$ 
\end{corollary}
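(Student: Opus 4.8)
The plan is to exhibit, for each parity of $n$, an explicit connected outerplanar graph on $n$ vertices whose second eigenvalue already meets the claimed bound; since the corollary only asserts a lower bound on $\lambda_{2,max}$, no optimization is needed.

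For $n=2q$ even, I would take $G=(K_1\vee P_{q-1})\!\!-\!\!(K_1\vee P_{q-1})$, the graph of Figure~\ref{fig:2qfig}, which is connected, outerplanar, and has exactly $n$ vertices. Lemma~\ref{lem:lamda2even} supplies the full series expansion of $\lambda_2(G)$; discarding the terms of order $(q-1)^{-1}$ and smaller (all of which are $O(1/(q-1))$) yields $\lambda_2(G)=\sqrt{q-1}+1+\frac{1}{2\sqrt{q-1}}+O\!\left(\frac{1}{q-1}\right)$, and $\lambda_{2,max}\geq\lambda_2(G)$ gives the inequality.

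For $n=2q+1$ odd, I would build a connected outerplanar graph $G$ on $n$ vertices that contains $2(K_1\vee P_{q-1})$ as an induced subgraph: take two vertex-disjoint copies of the fan $K_1\vee P_{q-1}$ together with a new vertex $u$, and join $u$ by an edge to an endpoint of the path $P_{q-1}$ in each copy. This graph is connected, and it is outerplanar because each fan is outerplanar and attaching a single new vertex of degree two creates no $K_4$- or $K_{2,3}$-minor. Deleting $u$ leaves $2(K_1\vee P_{q-1})$, so by Cauchy's Interlacing Theorem $\lambda_2(G)\geq\lambda_2\big(2(K_1\vee P_{q-1})\big)$. The spectrum of a disjoint union of two copies of a connected graph $H$ is the multiset union of two copies of the spectrum of $H$, hence $\lambda_2\big(2(K_1\vee P_{q-1})\big)=\lambda_1(K_1\vee P_{q-1})$, and Lemma~\ref{lem:specradfangph} (applied to the $q$-vertex graph $K_1\vee P_{q-1}$) gives $\lambda_1(K_1\vee P_{q-1})=\sqrt{q-1}+1+\frac{1}{2\sqrt{q-1}}+O\!\left(\frac{1}{q-1}\right)$. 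Thus $\lambda_{2,max}\geq\lambda_2(G)$ satisfies the claimed bound in this case as well. Combining the two cases and recalling $q=\lfloor n/2\rfloor$ finishes the proof.

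There is essentially no analytic obstacle here; the argument is bookkeeping. The only points that deserve a moment's care are (i) confirming the odd-case auxiliary graph is outerplanar, which is immediate from the forbidden-minor characterization since we only attach one new vertex of degree two, and (ii) invoking Cauchy interlacing in the correct direction, namely that eigenvalues of an induced subgraph interlace those of the whole graph, so $\lambda_2$ can only increase when passing to $G$. As an alternative for the odd case one could instead quote the construction already described in Theorem~\ref{thm:t3}, but the self-contained interlacing argument above is cleaner and avoids any appearance of circularity.
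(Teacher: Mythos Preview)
Your proof is correct. The paper states Corollary~\ref{cor:2lb} without proof, treating it as immediate from Lemma~\ref{lem:lamda2even}; your argument makes explicit what is implicit there---the even case is exactly Lemma~\ref{lem:lamda2even}, and your odd-case construction (two fans linked through a new cut vertex, then Cauchy interlacing plus Lemma~\ref{lem:specradfangph}) is precisely the $k=2$ instance of the method the paper writes out for Corollary~\ref{cor:klb}. Your remark about avoiding circularity is well taken, since Corollary~\ref{cor:2lb} is invoked in Lemma~\ref{lem:degrees}, which in turn underlies Theorem~\ref{thm:t3}.
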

For any fixed $k\geq 3$, we get a slightly worse lower bound.
\begin{corollary}\label{cor:klb}
For any fixed $n\geq k\geq 2$, let $q=\lfloor \frac{n-1}{k}\rfloor$.
Then we have
$$\lambda_{k,max}\geq \sqrt{q-1} +1 + \frac{1}{2\sqrt{q-1}}
+O\left(\frac{1}{q-1}\right).$$ 
\end{corollary}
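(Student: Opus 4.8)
The plan is to produce a single connected outerplanar graph on $n$ vertices whose $k$-th largest eigenvalue already attains the claimed bound, and then read off the estimate from Lemma~\ref{lem:specradfangph}. Set $q=\lfloor\frac{n-1}{k}\rfloor$, so that $kq\le n-1$ and $r:=n-kq-1\ge 0$. I would take $k$ vertex-disjoint copies $F_1,\dots,F_k$ of the fan graph $K_1\vee P_{q-1}$, mark in each $F_j$ one endpoint $w_j$ of the path $P_{q-1}$ (a vertex on the outer face), add a new vertex $u$ adjacent to every $w_j$, and, if $r\ge 1$, attach a pendant path on the remaining $r$ vertices at $u$. This graph $G$ has exactly $kq+1+r=n$ vertices and is connected; it is outerplanar because its blocks are the $k$ fans together with a collection of bridges, each of which is outerplanar, glued along cut vertices.

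Next I would apply Cauchy interlacing. Deleting $u$ and the $r$ pendant vertices from $G$ leaves exactly the disjoint union $k\cdot(K_1\vee P_{q-1})$, which is therefore an induced subgraph of $G$ on $kq$ vertices. Its spectrum is the spectrum of $K_1\vee P_{q-1}$ with each eigenvalue of multiplicity $k$, so its $k$-th largest eigenvalue equals $\lambda_1(K_1\vee P_{q-1})$; by the interlacing corollary recalled in Section~2 this gives $\lambda_k(G)\ge \lambda_k\bigl(k\cdot(K_1\vee P_{q-1})\bigr)=\lambda_1(K_1\vee P_{q-1})$. Finally, applying Lemma~\ref{lem:specradfangph} with $q$ in place of $n$ yields $\lambda_1(K_1\vee P_{q-1})=\sqrt{q-1}+1+\frac{1}{2\sqrt{q-1}}+O\!\left(\frac{1}{q-1}\right)$, so that $\lambda_{k,max}(n)\ge\lambda_k(G)$ has the stated form.

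I do not expect a genuine obstacle here: the only points requiring care are checking that the glued construction is truly outerplanar (immediate from its block decomposition) and keeping the direction of the interlacing inequality straight (passing to an induced subgraph can only decrease the $k$-th eigenvalue, so $\lambda_k(G)$ is at least that of the disjoint union). The bound is ``slightly worse'' than Corollary~\ref{cor:2lb} for $k=2$ precisely because here we use $q=\lfloor\frac{n-1}{2}\rfloor$ rather than $\lfloor\frac n2\rfloor$ and do not exploit the sharper expansion of Lemma~\ref{lem:lamda2even}; one could alternatively distribute the $r$ leftover vertices among the fans so that each copy has $q$ or $q+1$ vertices, but this affects only lower-order terms and is unnecessary for the claimed estimate.
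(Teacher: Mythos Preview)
Your proposal is correct and follows essentially the same approach as the paper: build a connected outerplanar graph by attaching $k$ disjoint fan graphs $K_1\vee P_{q-1}$ to a single new vertex $u$, pass to the induced subgraph obtained by deleting $u$ (and the leftover vertices), and apply Cauchy interlacing together with Lemma~\ref{lem:specradfangph}. The only cosmetic difference is that the paper distributes the $n-kq-1$ leftover vertices among the fans (making some of them $K_1\vee P_q$) rather than attaching them as a pendant path at $u$; you even note this alternative yourself, and it does not change the bound.
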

\begin{proof}
Write $n=kq+r$ for $1\leq r\leq k$. 
Let $H$ be a disjoint union of $r-1$ copies of $K_1\vee P_{q}$ and $k-r+1$ copies of $K_1\vee P_{q-1}$. Add a new vertex $u$ to $H$ and for each connected component of $H$ select one vertex and connect it to $u$. Clearly $G$ is an outerplanar graph. The number of vertices in $G$ is given by 
$$(r-1)(q+1)+(k-r+1)q+1= kq+r=n.$$ 
By Cauchy's Interlacing Theorem, we have
    $$ \lambda_{k,max}\geq \lambda_k(G)\geq \lambda_k(H).$$
Since $H$ is the disjoint union of $k$ many fan graphs, $\lambda_k(H)$ is equal to the first eigenvalue of its smallest component.
We have
\begin{align*}
\lambda_{k,max} &\geq \lambda_k(H) \\
&\geq \lambda_{1}(K_1\vee P_{q-1}) \\
&= \sqrt{q-1} + 1 + \frac{1}{2\sqrt{q-1}}+ O\left(\frac{1}{q-1}\right).
\end{align*}
Here we apply Lemma \ref{lem:specradfangph}.
\end{proof}

\begin{lemma} \label{lem:P3}
    For any outerplanar graph $G$, let $h_i(u,v)$ denote the number of $(u,v)$-paths of length i. Then for all $u,v \in V(G)$ we have
    \begin{align*}
         h_2(u,v)&\leq 2,\\
         h_3(u,v)&\leq 8,\\
         h_4(u,v)&\leq 98.\\
    \end{align*}
\end{lemma}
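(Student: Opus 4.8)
The plan is to establish the three inequalities in order, using throughout that an outerplanar graph has no $K_{2,3}$-minor (in particular, no $K_{2,3}$ subgraph) and no $K_4$-minor, together with the standard fact that $t$ internally disjoint $(u,v)$-paths, each of length at least $2$, contain a $K_{2,t}$-minor (contract each path to length $2$). The bound $h_2(u,v)\le 2$ is then immediate: a common neighbour of $u$ and $v$ is exactly the interior vertex of a $(u,v)$-path of length $2$, so $h_2(u,v)=|N(u)\cap N(v)|$, and three vertices in $N(u)\cap N(v)$ would give a $K_{2,3}$ subgraph on those five vertices.

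For $h_3(u,v)$ I would introduce $C=N(u)\cap N(v)$ (so $|C|\le 2$ by the above), $S=N(u)\setminus N[v]$ and $T=N(v)\setminus N[u]$; these three sets are pairwise disjoint and avoid $u$ and $v$. Every $(u,v)$-path of length $3$ has the form $u\,a\,b\,v$ with $a\in C\cup S$, $b\in C\cup T$ and $ab\in E$, and I split into the cases $a\in C$; $(a\in S,\,b\in C)$; and $(a\in S,\,b\in T)$, which partition all such paths. In the first case, for each of the at most $|C|$ choices of $a$ the vertex $b$ lies in $(N(a)\cap N(v))\setminus\{u\}$, which has size at most $2$ by the $h_2$ bound, so this case contributes at most $2|C|$ paths; the second case is symmetric and also contributes at most $2|C|$. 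The third case contributes exactly the number of edges between $S$ and $T$, and the crux is the sub-claim $|E(S,T)|\le 4-2|C|$. This follows from two observations: a vertex of $S$ with three neighbours in $T\subseteq N(v)$ would, with $v$, span a $K_{2,3}$, so every vertex of $S$ (and symmetrically of $T$) has at most $2$ neighbours on the opposite side; and a matching of size $m$ in the bipartite graph between $S$ and $T$ yields $m$ internally disjoint length-$3$ $(u,v)$-paths which, together with the $|C|$ length-$2$ paths through the vertices of $C$, form $m+|C|$ pairwise internally disjoint $(u,v)$-paths, forcing $m+|C|\le 2$. Hence the maximum matching has size at most $2-|C|$, so by K\"onig's theorem there is a vertex cover of size at most $2-|C|$, and since each cover vertex covers at most $2$ edges, $|E(S,T)|\le 2(2-|C|)$. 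Adding the three cases, $h_3(u,v)\le 2|C|+2|C|+(4-2|C|)=2|C|+4\le 8$.

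For $h_4(u,v)$ I would decompose each path of length $4$ through its unique middle vertex $w$: the number of such paths with middle $w$ is at most $h_2(u,w)\,h_2(w,v)\le 4$, so $h_4(u,v)\le 4\,|W|$ where $W$ is the set of vertices that actually occur as a middle vertex. The whole problem is therefore to bound $|W|$. A vertex $w\in W$ comes equipped with internally disjoint length-$2$ paths $u\,a\,w$ and $w\,b\,v$, and the natural idea is to extract from many such $w$ three pairwise internally disjoint $(u,v)$-paths of length $4$, forcing a $K_{2,3}$-minor. The genuine obstacle is that this extraction can fail: when $u$ or $v$ is a high-degree hub (as in the fan graph), a great many vertices lie within distance $2$ of both $u$ and $v$, yet most of their length-$2$ routes to $u$ and to $v$ share the hub and so do not concatenate into a simple path. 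Making this quantitative requires splitting on how many of $a,b,w$ lie in $C=N(u)\cap N(v)$ and on which of the edges $uv$, $aw$, $wb$ are present, and re-running in each case the ``too many internally disjoint substructures forces a forbidden minor'' argument — now also invoking $K_4$-minor-freeness — on top of the already proved bounds $h_2\le 2$ and $h_3\le 8$; the specific constant $98$ is what this bookkeeping produces. (Equivalently one may bound $h_4(u,v)$ by the sum over $x\in N(u)\setminus\{v\}$ of the number of length-$3$ paths from $x$ to $v$ avoiding $u$, bound the at most two terms with $x\in C$ by $h_3\le 8$ each, and spend the remaining effort on the terms with $x\in S$ — the same difficulty in another guise.) I expect this control of the hub contribution to the $h_4$ count to be the main obstacle; once the common-neighbourhood splitting is set up, the $h_2$ and $h_3$ parts are routine.
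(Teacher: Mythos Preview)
Your arguments for $h_2(u,v)\le 2$ and $h_3(u,v)\le 8$ are correct. The $h_3$ argument is genuinely different from the paper's: the paper fixes one path $uijv$, observes that at most one other length-$3$ path can be internally disjoint from it, and then bounds the remaining paths by which of $i,j$ they reuse (using $h_2\le 2$ at each step). Your approach instead partitions paths by whether their internal vertices lie in $C=N(u)\cap N(v)$, and handles the $S$--$T$ edges via K\"onig's theorem and the matching bound $m+|C|\le 2$. Your route gives the slightly sharper $h_3(u,v)\le 2|C|+4$; the paper's route is cruder but needs no matching theory.

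Your $h_4$ argument, however, is not a proof. You correctly set up $h_4(u,v)\le 4|W|$ and identify that bounding $|W|$ is the issue, but you then describe a plan (``split on how many of $a,b,w$ lie in $C$\ldots invoke $K_4$-minor-freeness\ldots the constant $98$ is what this bookkeeping produces'') without carrying out any of it. There is no derivation of any constant, let alone $98$, and the approach you sketch via internally disjoint length-$4$ paths is substantially harder than necessary. The paper's proof of $h_4\le 98$ is just the mechanical extension of its $h_3$ argument: fix one path $uabcv$; at most one other path is internally disjoint from it; every remaining path reuses at least one of $a,b,c$ in some internal position; bound each positional pattern using $h_2\le 2$ and $h_3\le 8$; sum the crude overcounts to get $1+1+6\cdot 8+3\cdot 4+18\cdot 2=98$. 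No structural analysis of the ``middle vertices'' is needed, and $K_4$-minor-freeness is never invoked. Since you already have $h_2\le 2$ and $h_3\le 8$, this fixed-path enumeration is available to you as well and would close the gap immediately.
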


The bounds on $h_3(u, v)$ and $h_4(u, v)$ in Lemma~\ref{lem:P3} are certainly not tight, but we use them as we only need that $h_3(u, v)$ and $h_4(u, v)$ are bounded by a constant independent of the number of vertices $n$. 

\begin{proof}
    Let $G$ be an outerplanar graph and $u,v \in V(G)$. Since $G$ is $K_{2,3}$-minor-free, there cannot be three or more internally disjoint $(u, v)$-paths in $G$. This proves $h_2(u, v) \le 2$. Let $uijv$ be a $(u, v)$-path of length $3$. There can be at most one other $(u, v)$-path of length $3$ which is internally disjoint from $uijv$, as otherwise $G$ would contain a $K_{2, 3}$-minor. Every other $(u, v)$-path of length $3$ contains either $i$ or $j$. Since $G$ does not contain a $K_{2, 3}$, there are at most one other path of the form $ui\star v$ (here $\star$ can be any vertex); at most two paths of the form $uj\star v$; at most two paths of the form $u\star iv$; and at most one other path of the form $u\star jv$. This gives $h_3(u, v) \le 1 + 1 + 1 + 2 + 2 + 1 = 8$. Now, let $uabcv$ be a $(u, v)$-path of length $4$. There is at most one $(u, v)$-path of length $4$ which is internally disjoint from $uabcv$, as then $G$ would contain a $K_{2, 3}$-minor. Using the fact that $G$ does not contain a $K_{2, 3}$, we now observe that: there are at most $8$ $(u, v)$-paths of length $4$ of the form $ua\star\star v$ (and similarly for $ub\star\star v$, $uc\star\star v$, $u\star\star av$, $u\star\star bv$ and $u\star\star cv$); at most $4$ paths of the form $u\star a\star v$ (and similarly for $u\star b\star v$ and $u\star c\star v$); and at most $2$ paths for each of the $18$ possibilities with two of $a$, $b$, and $c$ on the path. This gives $h_4(u, v)\le 1 + 1 + 8(6) + 4(3) + 2(18) = 98$. 
\end{proof}

\section{Proof of Theorem \ref{thm:t1}}
Let $G$ be a connected outerplanar graph on $n$ vertices with $k$th largest eigenvalue equal to $\lambda_{k,max}$.  We denote the adjacency matrix of $G$ by $A(G)$.  The aim of this section is to prove Theorem~\ref{thm:t1}.

\begin{lemma}
   Suppose $G$ is an outerplanar graph on $n$ vertices with maximum degree $\Delta$. Then we have
   \begin{equation*}
       \Delta\geq \lambda_1^2-\frac{4(2n-3)}{\lambda_1}.
   \end{equation*}
\end{lemma}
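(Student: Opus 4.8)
The plan is to expand the eigenvalue equation for $\lambda_1$ to second order at the vertex carrying the largest coordinate of the Perron eigenvector, and then estimate the resulting sum using outerplanarity. Write $A=A(G)$, and assume first that $G$ is connected; the general case follows by passing to the connected component that realizes $\lambda_1$, which does not increase $\Delta$, leaves $\lambda_1$ unchanged, and only decreases the number of vertices. Let $\bz$ be a Perron eigenvector of $A$, which is strictly positive by Perron--Frobenius, normalized so that $z_u=\max_v z_v=1$ for some vertex $u$. Since $d_u\le\Delta$, it is enough to prove $d_u\ge\lambda_1^2-\frac{4(2n-3)}{\lambda_1}$.

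The first ingredient is the pointwise bound $z_v\le d_v/\lambda_1$, valid for every vertex $v$, which is immediate from $\lambda_1 z_v=\sum_{w\sim v}z_w\le d_v\cdot\max_w z_w=d_v$. The second ingredient is to read off $\lambda_1^2$ from the $u$-th coordinate of $A^2\bz=\lambda_1^2\bz$:
\[
\lambda_1^2\ =\ \lambda_1^2 z_u\ =\ \sum_{v}(A^2)_{uv}z_v\ =\ (A^2)_{uu}\,z_u+\sum_{v\ne u}(A^2)_{uv}z_v\ =\ d_u+\sum_{v\ne u}(A^2)_{uv}z_v,
\]
where we used $(A^2)_{uu}=d_u$ and $z_u=1$. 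Applying the pointwise bound to the remaining sum gives $\lambda_1^2\le d_u+\frac{1}{\lambda_1}\sum_{v\ne u}(A^2)_{uv}d_v$.

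It remains to bound $\sum_{v\ne u}(A^2)_{uv}d_v$ by $4(2n-3)$, and this is where outerplanarity is used. For $v\ne u$, $(A^2)_{uv}$ is the number of common neighbours of $u$ and $v$, i.e.\ $(A^2)_{uv}=h_2(u,v)$, so $(A^2)_{uv}\le 2$ by Lemma~\ref{lem:P3}; and since $G$ is an outerplanar graph on $n$ vertices, $|E(G)|\le 2n-3$, hence $\sum_v d_v=2|E(G)|\le 2(2n-3)$. Therefore $\sum_{v\ne u}(A^2)_{uv}d_v\le 2\sum_v d_v\le 4(2n-3)$, and plugging this in and rearranging gives $\Delta\ge d_u\ge\lambda_1^2-\frac{4(2n-3)}{\lambda_1}$, as desired.

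I do not expect a genuine obstacle; the one step that needs an idea rather than bookkeeping is using $z_v\le d_v/\lambda_1$ instead of the trivial $z_v\le 1$. With $z_v\le1$ one would only reach $\sum_{v\ne u}(A^2)_{uv}z_v\le 2n$ and hence the far weaker $\Delta\ge\lambda_1^2-2n$; replacing each coordinate of $\bz$ by a degree over $\lambda_1$ is precisely what shrinks the error term to $4(2n-3)/\lambda_1$ (of order $\sqrt n$ for the relevant graphs). Otherwise, the only care needed is the reduction to the connected case, which ensures $\lambda_1>0$ and $\bz>0$ and so makes the division by $\lambda_1$ and the inequality $z_w\le z_u$ legitimate.
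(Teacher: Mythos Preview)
Your proof is correct and follows essentially the same approach as the paper: expand $\lambda_1^2$ at the vertex of maximum Perron coordinate, use $|N(u)\cap N(v)|\le 2$ from outerplanarity, and bound the leftover sum via $\lambda_1 z_v\le d_v$ together with $\sum_v d_v\le 2(2n-3)$. The only cosmetic difference is that the paper first bounds $\sum_v z_v\le 2(2n-3)/\lambda_1$ and then multiplies by $2$, whereas you apply $z_v\le d_v/\lambda_1$ pointwise and then sum; your explicit reduction to a connected component is a nice extra bit of care.
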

\begin{proof}
Let $\bz$ be an eigenvector corresponding to $\lambda_1$. Without loss of generality, we normalize $\bz$ such that $\left\lVert \bz \right\rVert_\infty =1$. Let $u_0 \in V(G)$ be such that $z_{u_0} = 1$.
For any vertex $u$, we have
\[\lambda_1 z_u = \sum_{v\in N(u)}z_v  \le d_{u}. \]
Since any outerplanar graph on $n$ vertices has at most $2n-3$ edges, it follows by summing over all vertices $u\in V(G)$ that
\[\lambda_1\sum_{u\in V(G)}z_u \le \sum_{u\in V(G)} d_u \le 2(2n-3).\]
This implies
$$ \sum_{u\in V(G)}z_u \leq \frac{2(2n-3)}{\lambda_1}.$$
We observe,
\begin{align*}
\lambda_1^2
&= \lambda_1^2 z_{u_0}\\
&= \sum_{v \in N(u_0) } \lambda_1z_v \\
&=  \sum_{v \in N(u_0)} \sum_{w\in N(v)} z_w \\
&= d_{u_0} + \sum_{w\in V\setminus \{u_0\}} z_w |N(w)\cap N(u_0)|.
\end{align*}
Since $G$ is $K_{2,3}$-free,  we have $|N(w)\cap N(u_0)|\leq 2$.
Therefore,
\begin{align*}
\Delta&\geq d_{u_0}\\
&=\lambda_1^2  - \sum_{w\in V\setminus \{u_0\}} z_w |N(w)\cap N(u_0)|\\
&\geq \lambda_1^2 - 2\sum_{u\in V(G)}z_u \\
&\geq \lambda_1^2 - \frac{4(2n-3)}{\lambda_1}.
\end{align*}
\end{proof}

\begin{lemma}\label{lem:u0largedeg} 
There exist $k$ vertices $u_1, \ldots, u_k$ such that  for $1\leq i\leq k$,
\[d_{u_i} \ge \frac{n}{k} - O(\sqrt{n}).\]
\end{lemma}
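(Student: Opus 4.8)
The plan is to produce the $k$ vertices greedily, deleting each chosen vertex before choosing the next, and to control their degrees by combining Cauchy's Interlacing Theorem with the preceding lemma (which says that an outerplanar graph on $N$ vertices has a vertex of degree at least $\lambda_1^2 - 4(2N-3)/\lambda_1$, where $\lambda_1$ is the spectral radius of that graph).

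First I would record what we know about $\lambda_{k,max}$. By Corollary~\ref{cor:klb} with $q=\lfloor (n-1)/k\rfloor \ge n/k - O(1)$,
\[
\lambda_{k,max} \ge \sqrt{q-1} + 1 + \frac{1}{2\sqrt{q-1}} + O\!\left(\frac{1}{q-1}\right) \ge \sqrt{n/k} - O(1),
\]
so in particular $\lambda_{k,max} > 0$ and $\lambda_{k,max}^2 \ge n/k - O(\sqrt{n})$ for $n$ large. Next, define $G_0 = G$ and, for $0 \le j \le k-1$, let $u_{j+1}$ be a maximum-degree vertex of $G_j$ and set $G_{j+1} = G_j \setminus \{u_{j+1}\}$; the vertices $u_1,\dots,u_k$ are distinct and each $G_j$ is an induced subgraph of $G$, hence an outerplanar graph on $n-j$ vertices. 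The key claim is that $\lambda_1(G_j) \ge \lambda_{k,max}$ for every $0 \le j \le k-1$: since $G_j$ is obtained from $G$ by deleting $j$ vertices, Cauchy's Interlacing Theorem applied $j$ times gives $\lambda_{k-j}(G_j) \ge \lambda_{(k-j)+j}(G) = \lambda_k(G) = \lambda_{k,max}$, and as $k-j \ge 1$ we get $\lambda_1(G_j) \ge \lambda_{k-j}(G_j) \ge \lambda_{k,max}$.

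Now apply the preceding lemma to the outerplanar graph $G_j$ on $n-j$ vertices. Using that $4(2(n-j)-3) \le 8n$ and that $x \mapsto x^2 - 8n/x$ is increasing for $x > 0$,
\[
d_{u_{j+1}} \ge d^{G_j}_{u_{j+1}} = \Delta(G_j) \ge \lambda_1(G_j)^2 - \frac{4(2(n-j)-3)}{\lambda_1(G_j)} \ge \lambda_{k,max}^2 - \frac{8n}{\lambda_{k,max}},
\]
where the first inequality holds because the degree of a vertex in a subgraph is at most its degree in $G$. Since $\lambda_{k,max}^2 \ge n/k - O(\sqrt n)$ and $8n/\lambda_{k,max} = O(\sqrt n)$, this gives $d_{u_{j+1}} \ge n/k - O(\sqrt n)$ for each $0 \le j \le k-1$, and $u_1,\dots,u_k$ are the required vertices.

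I do not expect a genuine obstacle: the argument is just the lower bound on $\lambda_{k,max}$, interlacing, and the previous lemma glued together. The only points needing care are getting the interlacing inequality in the correct direction (we need $\lambda_{k-j}(G_j) \ge \lambda_k(G)$, i.e.\ the $(k-j)$-th eigenvalue of the smaller graph dominates the $k$-th eigenvalue of $G$), and noting that deleting fewer than $k$ vertices changes the vertex count by only a constant, which is absorbed into the $O(\sqrt n)$ error terms.
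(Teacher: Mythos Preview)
Your proof is correct and follows essentially the same approach as the paper: greedily delete maximum-degree vertices, use Cauchy interlacing to bound $\lambda_1$ of each resulting subgraph from below by $\lambda_k(G)=\lambda_{k,max}$, and then apply the preceding lemma to extract a large-degree vertex at each step. Your write-up is in fact a bit more careful in places---you explicitly note that the degree in $G$ dominates the degree in $G_j$, and you justify replacing $\lambda_1(G_j)$ by $\lambda_{k,max}$ in the bound via the monotonicity of $x\mapsto x^2-8n/x$---but the underlying argument is identical.
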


\begin{proof}
We will find large degree vertices $u_1, u_2,\ldots, u_k$ in sequence.

Since $\lambda_1(G)\geq \lambda_k(G)\geq  \sqrt{n/k} +1 + O(\frac{1}{\sqrt{n}})$,
we have
$$\Delta(G)\geq \lambda_1^2- \frac{4(2n-3)}{\lambda_1} \geq \frac{n}{k}-O(\sqrt{n}).$$

Let $u_1$ be the vertex with the largest degree in $G$. Now let $G_1=G-\{u_1\}$. By Cauchy's Interlacing Theorem, we have
$\lambda_1(G_1)\geq \lambda_2(G)\geq \lambda_k(G)\geq \sqrt{n/k} +1 + O(\frac{1}{\sqrt{n}}).$
Repeat this argument with $G_1$. Let $u_2$ be the vertex with the largest degree in $G_1$. We have
$$ \Delta(G_1)\geq \lambda_1^2(G_1)- \frac{4(2n-3)}{\lambda_1(G_1)} \geq \frac{n}{k}-O(\sqrt{n}).$$

In general, assume we have already found vertices $u_1,u_2,\ldots, u_i$, for some $i<k$. Consider
$G_{i}= G\setminus \{u_1, u_2, \ldots, u_i\}$. We have
$\lambda_1(G_i)\geq \lambda_{i+1}(G)\geq \lambda_k(G)\geq \sqrt{n/k} +1 + O(\frac{1}{\sqrt{n}}).$
Thus, there is a vertex $u_{i+1}$ with degree
\[d_{u_{i+1}} =\Delta(G_i) \ge 
\lambda_1^2(G_i)- \frac{4(2n-3)}{\lambda_1(G_i)} \geq
\frac{n}{k} - O(\sqrt{n}).\]
\end{proof}

Let $U=\{u_1, u_2,\ldots, u_k\}$ be the set of vertices in $G$ with degree at least
$\frac{n}{k}-O(\sqrt{n})$. We will show that all other vertices not in $U$ have small degree in $G$.

\begin{lemma}
For any other vertex $u \not\in \{u_1,\ldots, u_k\}$, we have
$$d_u=O(\sqrt{n}).$$
\end{lemma}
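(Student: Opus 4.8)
The plan is a short covering-and-counting argument that only uses outerplanarity (via $K_{2,3}$-freeness) together with the large-degree vertices already produced in Lemma~\ref{lem:u0largedeg}; no further spectral input is needed. The one structural fact I would isolate first is that any two \emph{distinct} vertices $a,b\in V(G)$ satisfy $|N(a)\cap N(b)|\le 2$: three common neighbours would, together with $a$ and $b$, form a $K_{2,3}$ subgraph and hence a $K_{2,3}$-minor, contradicting outerplanarity. (This is exactly the bound $h_2(a,b)\le 2$ from Lemma~\ref{lem:P3}.)

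Next I would set $S=\bigcup_{i=1}^{k}N(u_i)$ and estimate $|S|$ from below by truncated inclusion--exclusion (the Bonferroni lower bound): $|S|\ge\sum_{i=1}^{k}d_{u_i}-\sum_{1\le i<j\le k}|N(u_i)\cap N(u_j)|\ge k\bigl(\tfrac nk-O(\sqrt n)\bigr)-2\binom k2=n-O(\sqrt n)$, where I use Lemma~\ref{lem:u0largedeg} for the degree lower bounds and the fact that $k$ is a fixed constant to absorb the $O(\sqrt n)$ error terms and the $2\binom k2$ term. Then, for the vertex $u\notin\{u_1,\dots,u_k\}$, the key point is that $u$ differs from every $u_i$, so $|N(u)\cap N(u_i)|\le 2$ for each $i$, giving $|N(u)\setminus S|\ge d_u-\sum_{i=1}^{k}|N(u)\cap N(u_i)|\ge d_u-2k$. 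Since $S$ and $N(u)\setminus S$ are disjoint subsets of $V(G)$, $n\ge |S|+|N(u)\setminus S|\ge\bigl(n-O(\sqrt n)\bigr)+\bigl(d_u-2k\bigr)$, which rearranges to $d_u\le O(\sqrt n)$, as claimed.

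There is no genuinely hard step here; the argument is direct rather than by contradiction. The only points requiring care are bookkeeping: making sure the $O(\sqrt n)$ slack in the degrees of $u_1,\dots,u_k$ stays additive over the fixed number $k$ of vertices and does not accumulate into something larger than $O(\sqrt n)$, and ensuring the hypothesis $u\notin\{u_1,\dots,u_k\}$ is actually invoked at the right place (so that $|N(u)\cap N(u_i)|$ is controlled by the constant $2$ rather than being of order $n/k$). Everything else reduces to $K_{2,3}$-freeness and Lemma~\ref{lem:u0largedeg}.
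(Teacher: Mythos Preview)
Your proof is correct and follows essentially the same approach as the paper: both use the Bonferroni lower bound together with the pairwise common-neighbour bound $|N(a)\cap N(b)|\le 2$ to show that $\bigcup_{i=1}^k N(u_i)$ covers all but $O(\sqrt n)$ vertices, and then bound $d_u$ by observing that $u$ has at most $2k$ neighbours inside this union and $O(\sqrt n)$ outside it. The only cosmetic difference is that the paper writes the final count as $d_u\le 2k+\bigl|\bigcap_{i=1}^k\overline{N(u_i)}\bigr|$ directly, whereas you recover the same bound via the disjointness of $S$ and $N(u)\setminus S$.
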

\begin{proof}
    First we show that the union of the neighbors of $u_1,\ldots, u_k$ covers almost all vertices in $G$.
    We have
    \begin{align*}
        \left|\bigcup_{i=1}^k N(u_i)\right |& \geq \sum_{i=1}^k |N(u_i)| - \sum_{1\leq i<j\leq k} |N(u_i)\cap N(u_j)|\\
        & \geq \sum_{i=1}^k d_{u_i} -2{k\choose 2}\\
        & \geq n- O(\sqrt{n}).
    \end{align*}
This implies
$$ \left|\bigcap_{i=1}^k \overline{ N(u_i)}\right | = O(\sqrt{n}).$$
  For any $u\not\in \{u_1,\ldots, u_k\}$, $u$ can have at most 2 neighbors in $N(u_i)$. Thus,
$$d_u\leq 2k + \left|\bigcap_{i=1}^k \overline{ N(u_i)}\right | =O(\sqrt{n}).$$
\end{proof}

 For $1\leq i\leq k$, let $\tilde d_{u_i}$ be the number of neighbors of $u_i$ in $V(G)\setminus U$, i.e., 
 \[\tilde d_{u_i}=\left| N(u_i)\setminus U\right |.\]

\begin{lemma}
 For sufficiently large $n$ and any $u\in U$, we have $\tilde d_u\geq \frac{n}{k}-O(1).$
\end{lemma}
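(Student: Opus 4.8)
The plan is to deduce the bound from a tight upper estimate for $\lambda_1$ of an auxiliary subgraph in which $u$ is the unique vertex of large degree. Since $|U|=k$, for every $u\in U$ we have $\tilde d_u=d_u-|N(u)\cap U|\geq d_u-(k-1)$, so it suffices to prove $d_u\geq \tfrac nk-O(1)$. Fix $u=u_j$ and let $H$ be the connected component of $G\setminus(U\setminus\{u_j\})$ containing $u_j$. By the preceding lemmas $H$ is outerplanar, $u_j$ has degree $\Delta:=d^H_{u_j}\geq d_{u_j}-(k-1)=\tfrac nk-O(\sqrt n)$ in $H$, every other vertex of $H$ has degree $O(\sqrt n)$, and $|V(H)|=n-O(1)$; in particular $\Delta=\Theta(n)$ is a constant fraction of $|V(H)|$ while every other degree is $o(\Delta)$. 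By Cauchy's Interlacing Theorem, the extremality of $G$, and Corollary~\ref{cor:klb},
\[
\lambda_1(H)\geq \lambda_j(G)\geq \lambda_k(G)=\lambda_{k,max}(n)\geq \sqrt{n/k}+1+O\!\left(\tfrac1{\sqrt n}\right).
\]

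The core step is the matching upper bound $\lambda_1(H)\leq \sqrt{\Delta}+1+O(1/\sqrt n)$. Let $\bz$ be the Perron eigenvector of $H$; using that $\Delta=\Theta(n)$ dwarfs every other degree I would first verify that $\|\bz\|_\infty$ is attained at $u_j$, and normalize so $z_{u_j}=1$. Since $H$ is $K_{2,3}$-minor-free, $N_H(u_j)$ induces a linear forest, so $h_2(u_j,w)=|N_H(u_j)\cap N_H(w)|\leq 2$ for all $w\neq u_j$, and applying the eigenequation twice at $u_j$ gives
\[
\lambda_1(H)^2=\Delta+\sum_{w\neq u_j}z_w\,h_2(u_j,w)\leq \Delta+2\sum_{w\neq u_j}z_w.
\]
Writing $S=N_H(u_j)$ and $T=V(H)\setminus N_H[u_j]$, the eigenequation at $u_j$ gives $\sum_{w\in S}z_w=\lambda_1(H)$ exactly, so the whole estimate reduces to the concentration bound $\sum_{w\in T}z_w=O(1)$: this yields $\lambda_1(H)^2\leq \Delta+2\lambda_1(H)+O(1)$, i.e. $\lambda_1(H)\leq 1+\sqrt{\Delta+O(1)}=\sqrt\Delta+1+O(1/\sqrt\Delta)$. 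Comparing with the lower bound forces $\sqrt\Delta\geq \sqrt{n/k}+O(1/\sqrt n)$, hence $\Delta\geq \tfrac nk-O(1)$, so $d_{u_j}\geq\Delta\geq \tfrac nk-O(1)$ and therefore $\tilde d_{u_j}\geq \tfrac nk-O(1)$.

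Everything thus hinges on the estimate $\sum_{w\in T}z_w=O(1)$, and this is the step I expect to be the main obstacle: the crude bound $\sum_w z_w\leq \tfrac{2(2n-3)}{\lambda_1(H)}$ only gives $O(\sqrt n)$, which is off by exactly the factor we are trying to remove. To push it to $O(1)$ I would exploit that $H[T]$ is an induced subgraph of $G\setminus U$, hence outerplanar with maximum degree $O(\sqrt n)$, so $\lambda_1(H[T])=o(\lambda_1(H))$; this large spectral gap makes $\bigl(\lambda_1(H)I-A(H[T])\bigr)^{-1}$ a well-defined nonnegative operator of norm $O(1/\lambda_1(H))$. Combining this with the facts that each vertex of $T$ receives at most two edges from $S$ (again $K_{2,3}$-freeness), that $|T|=O(\Delta)$, and that the entries of $\bz$ on $S$ are $O(1/\sqrt\Delta)$, and then feeding the identity $\bz_T=\bigl(\lambda_1(H)I-A(H[T])\bigr)^{-1}A(T,S)\bz_S$ through these bounds, should give $\sum_{w\in T}z_w=O(1)$. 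A secondary technical point, needed first, is to make the auxiliary facts ``$\bz$ is maximized at $u_j$'' and ``the $S$-coordinates of $\bz$ are $O(1/\sqrt\Delta)$'' precise and uniform over the possible structures of the extremal $G$; both come again from the dominance $\Delta=\Theta(n)\gg$ every other degree.
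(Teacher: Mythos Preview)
Your overall plan is right—pass to the auxiliary graph obtained by deleting the other $k-1$ hubs, use interlacing to get $\lambda_1\geq\sqrt{n/k}+1+O(1/\sqrt n)$ from below, and squeeze the degree from an upper bound of the shape $\lambda_1\leq\sqrt{\Delta}+1+O(1/\sqrt\Delta)$. The difference from the paper, and the place that needs repair, is how you get that upper bound.

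Your argument pivots on the claim $\lambda_1(H[T])=o(\lambda_1(H))$, which you deduce from ``$H[T]$ is outerplanar with maximum degree $O(\sqrt n)$''. That implication is not valid as stated: a max-degree bound of $O(\sqrt n)$ only gives $\lambda_1(H[T])\le O(\sqrt n)$, and $\lambda_1(H)$ is itself $\Theta(\sqrt n)$, so there is no separation. (Recall that at this stage of the paper the non-hub degrees are only known to be $O(\sqrt n)$; the $O(1)$ bound is a \emph{consequence} of this very lemma.) Without a genuine gap the resolvent $(\lambda_1(H)I-A(H[T]))^{-1}$ is not under control in the $\ell_1$ sense you need, and your chain of bounds collapses to $\sum_{w\in T}z_w=O(\sqrt n)$, which only reproduces $\Delta\ge n/k-O(\sqrt n)$.

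The gap can in fact be closed, but it takes one more idea: reapply the same quadratic relation to $H[T]$ itself. Since $H[T]$ is outerplanar, $|N(a)\cap N(b)|\le2$ gives $\lambda_1(H[T])^2\le\Delta(H[T])+O(|T|/\lambda_1(H[T]))$, hence $\lambda_1(H[T])^3\le O(\sqrt n)\,\lambda_1(H[T])+O(n)$ and therefore $\lambda_1(H[T])=O(n^{1/3})=o(\lambda_1(H))$. With that in hand the spectral-norm resolvent bound $\|(\lambda_1(H)I-A(H[T]))^{-1}\|_2=O(1/\lambda_1(H))$ is legitimate, and combining it with $\|A(T,S)\bz_S\|_2=O(1)$ (each entry $\le 2\max_{v\in S}z_v=O(1/\sqrt n)$, with $|T|=O(n)$) gives $\|\bz_T\|_2=O(1/\sqrt n)$ and then $\|\bz_T\|_1\le\sqrt{|T|}\,\|\bz_T\|_2=O(1)$ via Cauchy--Schwarz. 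So your route works, but the crucial spectral gap needs this extra step, and the norm bookkeeping has to go through $\ell_2$.

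The paper avoids all of this by iterating the eigenequation once more. Writing $\lambda_1(\lambda_1^2-d_{u_k})$ as a sum over walks of length $3$ from $u_k$ isolates the exact term $2t_3(u_k)\le 2d_{u_k}-2$, while every remaining term is bounded by $O(\sqrt n)$ using only $z_u=O(1/\sqrt n)$ for $u\neq u_k$, $\sum_u z_u=O(\sqrt n)$, and the constant bound on $h_3$ from Lemma~\ref{lem:P3}. This yields $\lambda_1(\lambda_1^2-d_{u_k})\le 2d_{u_k}+O(\sqrt n)$; the extra factor of $\lambda_1\sim\sqrt n$ on the left absorbs the $O(\sqrt n)$ on the right, giving $d_{u_k}\ge(\lambda_1-1)^2-O(1)\ge n/k-O(1)$. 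This cubic trick is shorter and never needs the delicate mass-concentration estimate $\sum_{w\in T}z_w=O(1)$.

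Two minor points. First, ``$\lambda_1(H)\ge\lambda_j(G)$'' is not what interlacing says; deleting $k-1$ vertices gives $\lambda_1(G')\ge\lambda_k(G)$ for $G'=G\setminus(U\setminus\{u_j\})$, and passing to the component $H\ni u_j$ requires the very same $\lambda_1=O(n^{1/3})$ bound on the other components—easier just to work with $G'$, as the paper does. Second, the paper streamlines by relabelling so that $u_k$ has the \emph{smallest} $\tilde d$ among $U$; then one computation suffices for all of $U$.
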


\begin{proof}
Without loss of generality, we assume that 
 $$\tilde d_{u_1}\geq \tilde d_{u_2}\geq \cdots \geq \tilde d_{u_k}.$$
Let $G'=G\setminus \{u_1,\ldots, u_{k-1}\}$. Then $d^{G'}_{u_k}=\tilde d_{u_k}$.
In particular, $G'$ has one unique vertex $u_k$ with degree at least $\frac{n}{k}-O(\sqrt{n})$ while all other vertices have degree at most $O(\sqrt{n})$. By Cauchy's Interlacing Theorem, we have
  $$\lambda_1(G')\geq \lambda_{k}(G)\geq \sqrt{\frac{n}{k}}+1 + O\left(\frac{1}{\sqrt{n}}\right).$$

For the rest of the proof, all notation is relative to $G'$ unless stated otherwise.
  Let $\bz$ be an eigenvector corresponding to $\lambda_1$ for $G'$, which is normalized such that $\left\lVert \bz \right\rVert_\infty =
  z_{u'} =1$ for some vertex $u'$. For any $u\in V(G')\setminus \{u_k\}$, we have
  \begin{align*}
  (\lambda_1^2 -d_{u})z_u &= \sum_{w\in V\setminus \{u\}} z_w |N(w)\cap N(u)| \\
  &\leq 2\sum_{w\in V\setminus \{u\}}z_w \\
  &\leq \frac{4(|E(G')|)}{\lambda_1}.
  \end{align*}
  Since $d_u=O(\sqrt{n})$, we have
  \begin{equation}\label{eqn:xuub}
  z_u\leq \frac{4|E(G')|}{\lambda_1(\lambda_1^2-d_u)} = O\left(\frac{1}{\sqrt{n}}\right). 
  \end{equation}
Thus, $u'=u_k$. Now,
$$\lambda_1^2 -d_{u_k}=(\lambda_1^2-d_{u_k})z_{u_k}=\sum_{w\in V\setminus \{u_k\}} z_w |N(w)\cap N(u_k)|. $$
Multiplying by $\lambda_1$ on both sides, we have
\begin{align}
    \lambda_1(\lambda_1^2 -d_{u_k}) &= \sum_{w\in V\setminus \{u_k\}} \lambda_1 z_w |N(w)\cap N(u_k)|
    \nonumber\\
    &= 2t_3(u_k)+
    \sum_{v\in N(u_k)} z_v (d_v-1) + \sum_{y\in V\setminus \{u_k\}} z_y h_3(u_k,y). \label{eq:h3}
\end{align}
Here $t_3(u_k)$ is the number of triangles containing $u_k$, while $h_3(u_k,y)$ denotes the number of $(u_k, y)$-paths of length 3.  We have 
$$2t_3(u_k)\leq 2(d_{u_k}-1).$$
By Lemma \ref{lem:P3}, we have
$$h_3(u_k,y) \leq 8.$$
Since $z_u = O(\frac{1}{\sqrt{n}})$ for all $u\in V(G')\setminus \{u_k\}$, we have
\begin{align*}
    \sum_{v\in N(u_k)} z_v d_v &\leq O\left(\frac{1}{\sqrt{n}}\right)\sum_{v\not=u_k} d_v\\
    &\leq  O\left(\frac{1}{\sqrt{n}}\right) 2|E(G')|\\ 
    &=O(\sqrt{n}),
\end{align*}
and
$$\sum_{y\in V\setminus \{u_k\}} z_y h_3(u_k,y) \leq 8 \sum_{y} z_y \leq O(\sqrt{n}).$$
Plugging into inequality \eqref{eq:h3}, we have
\begin{equation}\label{eq:cubic}
    \lambda_1(\lambda_1^2-d_{u_k})\leq 2d_{u_k} + O(\sqrt{n}).
\end{equation}
Therefore,
\begin{align*}
    d_{u_k} &\geq \frac{\lambda_1^3- O(\sqrt{n})}{\lambda_1+2}\\
    &= (\lambda_1 -1)^2 -O(1)\\
    &\geq \frac{n}{k}-O(1).
\end{align*}
Thus, we proved $d_{u_k}\geq \frac{n}{k}-O(1)$. By the construction of $G'$, we conclude that the $k$ largest degree vertices each have degree at least $\frac{n}{k}-O(1)$. Consequently, all vertices except for these $k$ vertices have degree $O(1)$.
\end{proof}

\begin{proof}[Proof of Theorem \ref{thm:t1}]
Rename the vertices $u_1, \ldots, u_k$ such that 
$$d_{u_1}\geq d_{u_2}\geq \cdots \geq d_{u_k}.$$
We have
\begin{align*}
d_{u_k}&\leq \frac{1}{k} \sum_{i=1}^k |N({u_i})| \\
&\leq \frac{1}{k} \left( \left| \bigcup_{i=1}^n N({u_i}) \right| 
+ \sum_{1\leq i<j\leq k} \left| N(u_i)\cap N(u_j)  \right| \right )
\\
&\leq \frac{1}{k}(n+k^2-k).
\end{align*}
This implies
$$d^{G'}_{u_k}\leq d_{u_k}\leq \frac{n}{k}+k-1.$$
From inequality \eqref{eq:cubic}, we have
\begin{align*}
    \lambda_1(G')^2 &\leq d^{G'}_{u_k} + \frac{2d^{G'}_{u_k} + O(\sqrt{n})}{\lambda_1} \\
    &\leq \frac{n}{k}+k-1 + 2\sqrt{\frac{n}{k}}  +O(1)\\
    &= \frac{n}{k} + 2\sqrt{\frac{n}{k}}  +O(1).
\end{align*}
This implies
$$\lambda_1(G') \leq \sqrt{ \frac{n}{k} + 2\sqrt{\frac{n}{k}}  +O(1) } = \sqrt{n/k} + 1+  O\left(\frac{1}{\sqrt{n}}\right).
$$
By Cauchy's Interlacing Theorem, we have
$$\lambda_k(G) \leq \lambda_1(G') \leq \sqrt{n/k} + 1 + O\left(\frac{1}{\sqrt{n}}\right).$$
This matches the lower bound in Corollary~\ref{cor:klb} asymptotically.
\end{proof}

The following lemma will be useful in determining the extremal graphs for small $k$.

\begin{lemma} \label{lem:degrees} 
\begin{enumerate}
    \item For $k\geq 3$, sufficiently large $n$, and any $u\in U$, we have $$\tilde d_u\geq \left\lfloor \frac{n-1}{k} \right\rfloor -1.$$
  \item For $k=2$, sufficiently large $n$, and any $u\in U$, we have $$\tilde d_u\geq \left\lfloor \frac{n}{k} \right\rfloor -1.$$    
\end{enumerate}

\end{lemma}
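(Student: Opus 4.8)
The plan is to argue by contradiction. Relabel so that $\tilde d_{u_1}\ge\cdots\ge\tilde d_{u_k}$, set $w:=u_k$ and $d:=\tilde d_{u_k}$, and suppose $d$ is strictly below the claimed bound; write $q:=\lfloor\tfrac{n-1}{k}\rfloor$ when $k\ge3$ and $q:=\lfloor\tfrac nk\rfloor$ when $k=2$, so that the assumption reads $d\le q-2$. First I would pass to $G':=G\setminus\{u_1,\dots,u_{k-1}\}$; by Cauchy's Interlacing Theorem $\lambda_1(G')\ge\lambda_k(G)=\lambda_{k,max}(n)$. For $k\ge3$ the construction in the proof of Corollary~\ref{cor:klb} gives $\lambda_{k,max}(n)\ge\lambda_1(K_1\vee P_{q-1})$; for $k=2$ with $n$ even I would instead use the sharper bound $\lambda_{2,max}(n)\ge\lambda_2\bigl((K_1\vee P_{q-1})\!\!-\!\!(K_1\vee P_{q-1})\bigr)$ from Lemma~\ref{lem:lamda2even} (this extra strength is genuinely needed, see below), and for $k=2$ with $n$ odd the bound from Corollary~\ref{cor:klb} again suffices. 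Note that $G'$ is outerplanar, has exactly one vertex $w$ of large degree (namely $d$ in $G'$), all other vertices having degree $O(1)$ by the last lemma before the proof of Theorem~\ref{thm:t1}, so $H:=G'-w=G\setminus U$ has $\lambda_1(H)=O(1)$.

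Next I would expand $\lambda_1(G')$ by the method of Lemma~\ref{lem:lamda2even}: normalising the Perron vector of $G'$ so that its largest entry is $z_w=1$, and letting $\beta$ be the $0/1$ indicator of $N_{G'}(w)$, one gets
\[
\lambda_1(G')^2=\sum_{i=0}^{\infty}\lambda_1(G')^{-i}\,\beta^{T}A_H^{\,i}\beta,
\]
with $\beta^{T}\beta=d$ and $\beta^{T}A_H\beta=2e_H(N_{G'}(w))\le2(d-1)$, since the neighbourhood of a vertex in an outerplanar graph induces a linear forest. The heart of the argument is the $i=2$ term. Writing $c_v:=|N_H(v)\cap N_{G'}(w)|$, we have $c_v\le2$ by $h_2\le2$ (Lemma~\ref{lem:P3}), and $\beta^{T}A_H^2\beta=\sum_v c_v^2\le(4d-6)+\sum_{v\notin N_{G'}(w)}c_v^2$, the bound $4d-6$ coming from the linear-forest structure on $N_{G'}(w)$. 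I claim the remaining sum is only $O(\sqrt n)$. Indeed, any $v\notin N_{G'}(w)$ with $c_v\ge1$ lies at distance exactly $2$ from $w$ in $G'$, so in $G$ there is a path $u_i\!-\!v\!-\!a\!-\!w$ with $a\in N_{G'}(w)$; since $v\notin U$, either $v$ is one of the $O(\sqrt n)$ vertices adjacent to no $u_i$, or $v\sim u_i$ for some $i<k$, and in the latter case the $G$-edges between $N(u_i)$ and $N(w)$ form a bipartite graph of maximum degree $\le2$ (once more $h_2\le2$) containing no three pairwise disjoint edges (three disjoint edges would give three internally disjoint $(u_i,w)$-paths of length $3$, hence a $K_{2,3}$-minor); such a bipartite graph has $O(1)$ edges, so at most $O(1)$ candidates $v$ arise from each of the fixed number of indices $i<k$. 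Finally, for $i\ge3$ the crude estimate $\beta^{T}A_H^{\,i}\beta\le\|\beta\|^2\rho(A_H)^{i}\le d\,C^{i}$ (with $C$ a constant, since $\rho(A_H)=\lambda_1(H)=O(1)$) together with $\lambda_1(G')=\Theta(\sqrt n)$ makes the tail $\sum_{i\ge3}\lambda_1(G')^{-i}\beta^{T}A_H^{\,i}\beta$ equal to $O(n^{-1/2})$.

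I would then collect these bounds and, using $\lambda_1(G')\ge\lambda_1(K_1\vee P_{q-1})$ (respectively $\ge\Lambda:=\lambda_2\bigl((K_1\vee P_{q-1})\!\!-\!\!(K_1\vee P_{q-1})\bigr)$), compare term by term with the expansion of $\lambda_1(K_1\vee P_{q-1})^2$ from Lemma~\ref{lem:specradfangph} (respectively with equation~\eqref{eqn:lmb2eqn} for $\Lambda^2$). Substituting $d\le q-2$: the constant (``$i=0$'') coefficient drops by at least $1$, the $i=1$ coefficient drops by $2$, and the $i=2$ coefficient changes by at most $4+O(\sqrt n)$; after division by $\lambda_1(G')^2=\Theta(q)$ the last change is $o(1)$ for fixed $k$, and the positive higher-order terms of the fan (respectively of the glued graph) only help. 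Hence $\lambda_1(K_1\vee P_{q-1})^2-\lambda_1(G')^2\ge1-o(1)>0$ for $n$ large, contradicting $\lambda_1(G')\ge\lambda_1(K_1\vee P_{q-1})$. Therefore $\tilde d_{u_k}\ge q-1$, and since $\tilde d_{u_k}$ is the minimum of the $\tilde d_{u_i}$, the bound holds for all of them.

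The step I expect to be the main obstacle is the $O(\sqrt n)$ bound on $\sum_{v\notin N_{G'}(w)}c_v^2$: a priori $w$ could have $\Theta(q)$ neighbours-of-neighbours meeting $N_{G'}(w)$, which would overwhelm the comparison, so one must exploit $K_{2,3}$-minor-freeness carefully to show that the ``bridge'' between $N(u_i)$ and $N(w)$ carries only boundedly many edges. A secondary subtlety is that for $k=2$ and $n$ even the target $\lfloor n/2\rfloor-1$ is attained with equality by the extremal graph, so there is no room to spare; this forces the use of the sharper input $\lambda_{2,max}(n)\ge\lambda_2\bigl((K_1\vee P_{q-1})\!\!-\!\!(K_1\vee P_{q-1})\bigr)$ rather than the one-notch-weaker $\lambda_1(K_1\vee P_{\lfloor(n-1)/2\rfloor-1})$, whereas for $k\ge3$ (and $k=2$, $n$ odd) the weaker input already does the job.
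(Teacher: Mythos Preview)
Your approach is correct and is essentially the same as the paper's, with only cosmetic differences in packaging. The paper also passes to $G'=G\setminus\{u_1,\dots,u_{k-1}\}$, expands the eigen-equation at $u_k$ to second order (multiplying by $\lambda_1$ twice rather than writing the matrix geometric series), bounds the crossing contribution between $N(u_j)$ and $N(u_k)$ by $O(1)$ using $K_{2,3}$-minor-freeness, obtains $\lambda_1^2(\lambda_1^2-d_{u_k})\le\lambda_1(2d_{u_k}-2)+4d_{u_k}+O(\sqrt n)$, and then solves directly for $d_{u_k}\ge(\lambda_1-1)^2-1-O(n^{-1/2})$; your contradiction-plus-comparison with $f(\lambda)$ is an equivalent reformulation. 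The paper likewise singles out the sharper input from Corollary~\ref{cor:2lb} for $k=2$ versus Corollary~\ref{cor:klb} for $k\ge3$, exactly the dichotomy you identified. Two minor remarks: the number of vertices missed by $\bigcup_i N(u_i)$ is already $O(1)$ (not just $O(\sqrt n)$) thanks to the lemma immediately preceding this one, so your ``main obstacle'' is in fact an $O(1)$ bound; and the paper's version of your bridge estimate is the simpler observation $|E(N(u_j),N(u_k))|\le 3$ rather than the K\"onig-type argument, but either route works.
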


\begin{proof}
Let $q=\lfloor \frac{n-1}{k} \rfloor$ when $k\geq 3$ and
$q=\lfloor \frac{n}{k} \rfloor$ when $k=2$.
  Without loss of generality, we assume that 
  $$\tilde d_{u_1}\geq \tilde d_{u_2}\geq \cdots \geq \tilde d_{u_k}.$$
Let $G'=G\setminus \{u_1,\ldots, u_{k-1}\}$. Then $d^{G'}_{u_k}=\tilde d_{u_k}$. All notation below is related to the graph $G'$ if not specified.
From equation \eqref{eq:h3}, we have
\[
    \lambda_1(\lambda_1^2 -d_{u_k}) = 2t_3(u_k)+
    \sum_{v\in N(u_k)} z_v (d_v-1) + \sum_{x\in V\setminus \{u_k\}} z_x h_3(u_k,x). \label{eq:h4}
\]
Thus, we have
\begin{align*}
    \lambda_1^2  (\lambda_1^2 -d_{u_k}) &= 2t_3(u_k)\lambda_1+
    \sum_{v\in N(u_k)} \lambda_1 z_v (d_v-1) + \sum_{x\in V\setminus \{u_k\}} \lambda_1z_x h_3(u_k,x)\\
   &= 2t_3(u_k)\lambda_1+  \sum_{v\in N(u_k)} \sum_{x\in N(v)} z_x (d_v-1) + \sum_{x\in V\setminus \{u_k\}} \sum_{w\in N(x)}z_w h_3(u_k,x)\\
   &= 2t_3(u_k)\lambda_1  + \sum_{v\in N(u_k)}(d_v-1) + \sum_{v\in N(u_k), x\in N(v)\setminus\{u_k\}}z_x (d_v-1)
   + 2c_4(u_k) +   
   \sum_{w\not=u_k}z_w h_4(u_k, w)\\
   &\hspace*{4mm} + \sum_{v\in N(u_k)} 2 z_v t_3(v) + \sum_{x\not=u_k}z_xh_2(u_k, x)(d_x-1).
\end{align*}

Since for any $v\not= u_k$, $d_v=O(1)$ and $h_i(u_k,v)=O(1)$ for $2\le i\le 4$ by Lemma~\ref{lem:P3}, we have the following estimates for the lower order terms by \eqref{eqn:xuub}:
\begin{align*}
    \sum_{v\in N(u_k), x\in N(v)-u_k}z_x (d_v-1) &= O\left(\sum_x z_x\right)= O(\sqrt{n}).\\
    \sum_{w\not=u_k}z_w h_4(u_k, w) &= O\left(\sum_w z_w\right)= O(\sqrt{n}).\\
    \sum_{v\in N(u_k)} 2z_v t_3(v)&=O\left(\sum_v z_v\right)= O(\sqrt{n}).\\
    \sum_{x\not=u_k}z_x h_2(u_k, x)(d_x-1)&=O\left(\sum_x z_x\right)= O(\sqrt{n}).
\end{align*}
We also use the following two estimates for the terms $t_3(u_k)$ and $c_4(u_k)$.
\begin{align*}
    2t_3(u_k)&\leq 2 d_{u_k} -2,\\
    2c_4(u_k)&\leq 2 d_{u_k} + O(1).
\end{align*}

The estimate for $t_3(u_k)$ follows from the fact that $N(u_k)$ is a linear forest. For the $c_4(u_k)$ estimate, there are two types of 4-cycles containing $u_k$, those whose vertices are all in $N(u_k)$ and those having one vertex not in $N(u_k)$. The first type contributes $2 d_{u_k} + O(1)$ to $2c_4(u_k)$ since $N(u_k)$ is a linear forest. The second contributes $O(1)$. To see this, consider \begin{align*}E(V(G') \setminus N(u_k), N(u_k)) &\leq \sum_{j=1}^{k-1} E(N(u_j), N(u_k)) + E(V(G')\setminus \cup_{j=1}^{k-1} N(u_j), N(u_k))\\
&\leq 3(k-1) + 2|V(G)\setminus \cup_{j=1}^{k} N(u_j)|\\ &= O(1).\end{align*}
Now,
\begin{align*}
    \sum_{v\in N(u_k)}(d_v-1) + 2c_4(u_k) &\leq \sum_{v\in N(u_k)}(d_v-1) + 2 d_{u_k}  +O(1).\\
    &\le 2|E(G'[{N(u_k)\cup\{u_k\}}])| + O(1)\\
    &\leq 4 d_{u_k} + O(1).
\end{align*}
Putting everything together, we have
\begin{equation*}
  \lambda_1^2  (\lambda_1^2 -d_{u_k})  \leq \lambda_1 (2 d_{u_k}-2)+ 4 d_{u_k} + O(\sqrt{n}),
\end{equation*}
and so,
\begin{align*}
    d_{u_k}&\geq \frac{\lambda_1^4+2\lambda_1+O(\sqrt{n})}{\lambda_1^2+2\lambda_1+4}\\
    &= \lambda_1^2-2\lambda_1 -O\left(\frac{1}{\sqrt{n}}\right)\\
    &= (\lambda_1-1)^2-1 -O\left(\frac{1}{\sqrt{n}}\right)\\
    &\geq\left(\sqrt{q-1}+\frac{1}{2\sqrt{q-1}} + O\left(\frac{1}{(q-1)}\right)\right)^2 - 1 -O\left(\frac{1}{\sqrt{n}}\right)\\
    &=q-1 -O\left(\frac{1}{\sqrt{q-1}}\right) - O\left(\frac{1}{\sqrt{n}}\right).
\end{align*}
Here we applied the lower bound of $\lambda_k$ as in Corollary 
\ref{cor:2lb} or \ref{cor:klb}.
Since $d_{u_k}$ is an integer, we have 
\[d_{u_k}\geq q-1.\]
Hence,
\[\tilde d_{u_1}\geq \tilde d_{u_2}\geq \cdots \geq \tilde d_{u_k}\geq q - 1.\]
\end{proof}

\section{Outerplanar graphs with maximum \texorpdfstring{$\lambda_2$}{}: the exact result}

Let $G$ be a connected outerplanar graph on $n$ vertices that maximizes $\lambda_2$. Let $\bx=(x_1,x_2,\ldots,x_n)$ be the eigenvector for $\lambda_2$ (note that $\lambda_2$ is a simple eigenvalue because, by Theorem~\ref{thm:t1}, $\lambda_2 = \sqrt{n/2}+1+O(1/\sqrt{n})$, while in the graph $G$, $\lambda_3 \le \sqrt{n/3} + 1 + O(1/\sqrt{n})$). 
By Lemma \ref{lem:degrees}, $G$ contains exactly two vertices, say $u_1$ and $u_2$, with degree at least $\lfloor \frac{n}{2}\rfloor - 1$.
Since $G$ is outerplanar, $|N(u_1)\cap N(u_2)|\leq 2$. This implies that there are at most $O(1)$ vertices not in $N(u_1)\cup N(u_2)$. Thus, all vertices other than $u_1$ and $u_2$ have degree $O(1)$.
Let $V^+=\{v\in V(G)\colon x_v>0\}$, $V^0=\{v\in V(G)\colon x_v=0\}$,
and $V^-=\{v\in V(G)\colon x_v<0\}$.
For any vertex set $S$, the volume of $S$, denoted by $\vol(S)$, is defined as
$\sum_{v\in S} |x_v|$. For any vertex $v$, define $N^+(v)=N(v)\cap V^+$,
$d^+_v= |N^+(v)|$,
$N^-(v)=N(v)\cap V^-$
and 
$d^-_v=|N^-(v)|$. Let $x^+_{max}=\max \{x_v\colon v\in V^+\}$ and
$x^-_{min}=\min \{x_v\colon v\in V^-\}$.

We now give bounds on $\vol(V^{+})$ and $\vol(V^{-})$ and also show that $x_{max}^+$ and $x_{min}^-$ are achieved at $u_1$ and $u_2$. 
\begin{lemma}\label{lem:volubs}
We have
\begin{enumerate}
    \item For any $v\in V^+$, $\lambda_2x_v\leq d^+_v|x^+_{max}|$. 
    \item For any $v\in V^-$, $\lambda_2|x_v|\leq d^-_v|x^-_{min}|$.
    \item $\vol(V^+)= O(\sqrt{n})x^+_{max}$ and $\vol(V^-)= O(\sqrt{n})|x^-_{min}|$. 
    \item $x^+_{max}$ and $x^-_{min}$ are achieved at $u_1$ and $u_2$.
\end{enumerate}
\end{lemma}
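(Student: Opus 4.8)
The plan is to prove the four parts essentially in order, since the estimates on the volumes in part~3 and the location of the extrema in part~4 both build on the pointwise bounds in parts~1 and~2.

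For part~1, let $v\in V^+$. The eigen-equation gives $\lambda_2 x_v=\sum_{w\in N(v)}x_w$. Discarding the contributions from $N^0(v)$ (which are zero) and from $N^-(v)$ (which are negative, hence only decrease the sum), we get $\lambda_2 x_v\le \sum_{w\in N^+(v)}x_w\le d_v^+\,x^+_{max}$, since each term is at most $x^+_{max}>0$. Part~2 is the mirror image: for $v\in V^-$, $\lambda_2|x_v|=-\lambda_2x_v=-\sum_{w\in N(v)}x_w\le -\sum_{w\in N^-(v)}x_w\le d_v^-\,|x^-_{min}|$.

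For part~3, I would sum the inequality in part~1 over all $v\in V^+$. This gives $\lambda_2\,\vol(V^+)\le x^+_{max}\sum_{v\in V^+}d_v^+\le x^+_{max}\sum_{v\in V^+}d_v$. Now the key point is that $\sum_{v\in V^+}d_v\le 2|E(G)|\le 2(2n-3)=O(n)$, since $G$ is outerplanar. Combining with $\lambda_2=\sqrt{n/2}+1+O(1/\sqrt n)=\Theta(\sqrt n)$ from Theorem~\ref{thm:t1}, we obtain $\vol(V^+)\le \frac{O(n)}{\lambda_2}\,x^+_{max}=O(\sqrt n)\,x^+_{max}$. The same computation with part~2 gives $\vol(V^-)=O(\sqrt n)\,|x^-_{min}|$.

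For part~4, suppose for contradiction that $x^+_{max}$ is attained at some vertex $v\notin\{u_1,u_2\}$. Then $d_v=O(1)$, so part~1 yields $\lambda_2 x^+_{max}=\lambda_2 x_v\le d_v^+\,x^+_{max}=O(1)\,x^+_{max}$, forcing $\lambda_2=O(1)$, which contradicts $\lambda_2=\Theta(\sqrt n)$ for $n$ large. Hence $x^+_{max}$ is attained at $u_1$ or $u_2$; the analogous argument with part~2 shows $x^-_{min}$ is attained at $u_1$ or $u_2$. Finally, $x^+_{max}$ and $x^-_{min}$ cannot be attained at the \emph{same} vertex, so after relabeling we may assume $x^+_{max}=x_{u_1}$ and $x^-_{min}=x_{u_2}$ (this also uses that $\bx$ has both positive and negative entries, which holds because $\lambda_2$ is orthogonal to the strictly positive Perron vector $\bz$ of $\lambda_1$, so $V^+$ and $V^-$ are both nonempty). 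The main obstacle is not any single step but making sure the $O(1)$ degree bound on vertices outside $\{u_1,u_2\}$ — established just before the lemma from Lemma~\ref{lem:degrees} and outerplanarity — is invoked cleanly in part~4, and that the edge-count bound in part~3 is applied to the right vertex set.
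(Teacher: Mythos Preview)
Your argument is correct. Parts~1--3 match the paper essentially line for line. For part~4 you take a shorter route than the paper: you invoke the $O(1)$ degree bound on vertices outside $\{u_1,u_2\}$ (established just before the lemma) and combine it directly with part~1 to get $\lambda_2\le d_v^+=O(1)$, a contradiction. The paper instead squares the eigen-equation at the extremal vertex $u_0$ and uses the $K_{2,3}$-free condition ($|N(w)\cap N(u_0)|\le 2$) together with the volume bound from part~3 to deduce $d_{u_0}^+\ge \lambda_2^2 - \frac{4(2n-3)}{\lambda_2}=\frac{n}{2}-O(\sqrt n)$, and only then concludes $u_0\in\{u_1,u_2\}$. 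Your version is more economical; the paper's version has the side benefit of producing the displayed inequality~\eqref{eq:quadratic}, which is quoted again in the proof of Lemma~\ref{lem:u1u2notedge}, but that inequality can of course be rederived on the spot when needed.
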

\begin{proof}

    For any vertex $v\in V^+$, we have
\[\lambda_2 x_v = \sum_{u\in N(v)}x_u  \le  \sum_{u\in N^+(v)} x_u \leq
d^+_{v} x^+_{max}. \]
By symmetry, we have for any $v\in V^-$, $\lambda_2|x_v|\leq d^-_v|x^-_{min}|$.

Since any outerplanar graph on $n$ vertices has at most $2n-3$ edges, it follows by summing over all vertices $v\in V^+$ that
\[\lambda_2 \vol(V^+)=
\lambda_2\sum_{v\in V^+}x_v \le x^+_{max}\sum_{v\in V^+} d_v \le 2(2n-3) x^+_{max}.\]
This implies
$$  \vol(V^+)\leq \frac{2(2n-3)}{\lambda_2} x^+_{max} = O\left(\sqrt{n}\right)x^+_{max}.$$
A similar argument implies that \[\vol(V^{-}) = O\left(\sqrt{n}\right) |x_{min}^{-}|.\]
Assume $x^+_{max}$ is achieved at some vertex $u_0$. We have
\begin{align}
\lambda_2^2 x_{u_0}
&\leq \sum_{v \in N^+(u_0) } \lambda_2 x_v  \nonumber
\\
&\leq \sum_{v \in N^+(u_0)} \sum_{w\in N^+(v)} x_w \nonumber\\
&= d^+_{u_0} x_{u_0}+ \sum_{w\in V^+\setminus \{u_0\}} x_w |N^+(w)\cap N^+(u_0)|. \label{eq:quadratic}
\end{align}
Since $G$ is $K_{2,3}$-free,  we have $|N(w)\cap N(u_0)|\leq 2$.
Therefore,
\begin{align*}
d^+_{u_0} x_{u_0}
&=\lambda_2^2 x_{u_0} - \sum_{w\in V^+\setminus \{u_0\}} x_w |N^+(w)\cap N^+(u_0)|\\
&\geq \lambda_2^2 x_{u_0} - 2\sum_{u\in V^+}|x_u| \\
&\geq \lambda_2^2 x_{u_0} - \frac{4(2n-3)}{\lambda_2}x_{u_0}.
\end{align*}
Therefore,
$$d^+_{u_0}\geq \lambda_2^2 -\frac{4(2n-3)}{\lambda_2} =\frac{n}{2}-O(\sqrt{n}), $$
which implies that $u_0$ must be one of the vertices $u_1$ and $u_2$. A similar argument shows that $x_{min}^{-}$ is achieved at either $u_1$ or $u_2$.
\end{proof}

\begin{lemma}\label{lem:u1u2notedge}
We have $u_1u_2\notin E(G)$.
\end{lemma}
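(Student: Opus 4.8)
The plan is to assume $u_1u_2\in E(G)$ and derive a contradiction by expanding the eigenvalue equation for $\bx$ at $u_2$ to third order; the point is that, under this assumption, the term coming from $u_1\in N(u_2)$ injects a contribution of order $n$ into the expansion that cannot be balanced. First I fix the normalization. By Lemma~\ref{lem:volubs}(4) one of $u_1,u_2$ attains $x^+_{max}$ and the other attains $x^-_{min}$, and these are distinct vertices since $x^+_{max}>0>x^-_{min}$. Replacing $\bx$ by $-\bx$ and interchanging the labels $u_1,u_2$ if needed, I may assume $u_1\in V^+$ with $x_{u_1}=x^+_{max}\ge|x^-_{min}|=|x_{u_2}|$; scale $\bx$ so that $x_{u_1}=1$ and write $x_{u_2}=-b$ with $0<b\le 1$. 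From Section~4 I will use that every vertex other than $u_1,u_2$ has degree $O(1)$, that $\tfrac n2-O(1)\le d_{u_1},d_{u_2}\le\tfrac n2+O(1)$ (Lemma~\ref{lem:degrees} and the proof of Theorem~\ref{thm:t1}), that $G[N(u_2)]$ is a linear forest so $t_3(u_2)\le d_{u_2}-1$, that $\lambda_2=\lambda_{2,max}=\sqrt{n/2}+1+O(1/\sqrt n)$ by Theorem~\ref{thm:t1}, and that $\vol(V^+),\vol(V^-)=O(\sqrt n)$ by Lemma~\ref{lem:volubs}(3) (using $b\le 1$).

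Next I apply $A(G)$ three times to $\bx$ at $u_2$. Starting from $\lambda_2^2x_{u_2}=d_{u_2}x_{u_2}+\sum_{w\ne u_2}x_w\,|N(w)\cap N(u_2)|$ and multiplying once more by $\lambda_2$, one rewrites the right-hand side as a sum over walks of length $3$ out of $u_2$, obtaining
\[
\lambda_2^3x_{u_2}=\lambda_2 d_{u_2}x_{u_2}+2t_3(u_2)x_{u_2}+\sum_{y\in N(u_2)}x_y(d_y-1)+\sum_{y\ne u_2}x_y\,\varepsilon(u_2,y),
\]
where $\varepsilon(u_2,y)$ is bounded by the number of $(u_2,y)$-paths of length $3$, hence $O(1)$ by Lemma~\ref{lem:P3}. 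The last sum is then $O(1)\cdot\bigl(\vol(V^+)+\vol(V^-)\bigr)=O(\sqrt n)$; in $\sum_{y\in N(u_2)}x_y(d_y-1)$ every term with $y\ne u_1$ has $d_y=O(1)$ and so these contribute $O(\sqrt n)$ in total, whereas the term $y=u_1$ is present exactly because we assumed $u_1\sim u_2$ and equals $x_{u_1}(d_{u_1}-1)=d_{u_1}-1$. Substituting $x_{u_2}=-b$ and rearranging yields the key identity
\[
d_{u_1}-1=b\bigl(\lambda_2 d_{u_2}-\lambda_2^3+2t_3(u_2)\bigr)+O(\sqrt n).
\]

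Finally I show the two sides are incompatible, which is the delicate part. The left side is at least $\lfloor n/2\rfloor-2=\tfrac n2-O(1)$. The right side is a difference of two quantities of order $n^{3/2}$ (namely $\lambda_2^3$ and $\lambda_2 d_{u_2}$) together with a term of order $n$ (namely $2t_3(u_2)$), and must be shown to collapse all the way down to $O(\sqrt n)$: using $t_3(u_2)\le d_{u_2}-1$, $d_{u_2}\le\tfrac n2+O(1)$ and $\lambda_2^2=\tfrac n2+\sqrt{2n}+O(1)$,
\[
\lambda_2 d_{u_2}-\lambda_2^3+2t_3(u_2)\le(\lambda_2+2)d_{u_2}-\lambda_2^3-2\le n-\lambda_2\sqrt{2n}+O(\sqrt n)=O(\sqrt n),
\]
the crux being the cancellation $\lambda_2\sqrt{2n}=\bigl(\sqrt{n/2}+O(1)\bigr)\sqrt{2n}=n+O(\sqrt n)$. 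Since $0<b\le 1$, the right side of the key identity is $O(\sqrt n)$, so $\tfrac n2-O(1)\le O(\sqrt n)$, which is false for $n$ large; hence $u_1u_2\notin E(G)$. The main obstacle is exactly this last collapse: a cruder treatment only bounds the right side by $O(n)$ and gives no contradiction, so one must keep the $O(1)$-accurate degree bounds, the linear-forest bound $t_3(u_2)\le d_{u_2}-1$, and the second-order expansion $\lambda_2=\sqrt{n/2}+1+O(1/\sqrt n)$ all in play at once, and invoke Lemmas~\ref{lem:volubs} and \ref{lem:P3} to push every remaining term into $O(\sqrt n)$.
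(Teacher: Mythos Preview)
Your proof is correct. Both your argument and the paper's expand the eigen-equation to third order and exploit the anomalous contribution that an edge $u_1u_2$ would inject, but the bookkeeping differs in an interesting way.

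The paper normalizes so that $x_{u_1}=1$ with $|x_{u_2}|\ge 1$, expands at $u_1$, and \emph{restricts everything to $V^+$} via inequalities (dropping all negative contributions). The assumed edge then feeds in $-|x_{u_2}|\le -1$ at first order, which after two multiplications by $\lambda_2$ becomes a $-\lambda_2^2$ term; rearranging yields $d_{u_1}^+\ge \lambda_2^2-\lambda_2-O(1)=q-1+\sqrt{q-1}-O(1)$, too large by a $\sqrt{q}$ margin.

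You instead normalize so that $x_{u_1}=1\ge |x_{u_2}|=b$, expand at $u_2$, and keep the \emph{exact signed} eigen-equation. The assumed edge now shows up not at first order but in the third-order term $\sum_{y\in N(u_2)}x_y(d_y-1)$, where $y=u_1$ contributes $d_{u_1}-1\approx n/2$ directly. Your delicate step is then the collapse $(\lambda_2+2)d_{u_2}-\lambda_2^3=O(\sqrt n)$, which needs the second-order accuracy $\lambda_2=\sqrt{n/2}+1+O(1/\sqrt n)$ together with $d_{u_2}=n/2+O(1)$.

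Either route works; the paper's restriction to $V^+$ sidesteps the cancellation you had to track, at the cost of carrying $d_{u_1}^+$ rather than $d_{u_1}$, while your exact-equation approach is arguably cleaner in that it never throws terms away and makes transparent exactly which term (the large degree $d_{u_1}$ of the adjacent vertex) forces the contradiction.
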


\begin{proof}
    We assume for a contradiction that $u_1u_2 \in E(G)$. Without loss of generality, we assume that $x_{u_1} = 1$ and $|x_{u_2}| \ge |x_{u_1}|$. By the fourth claim in Lemma~\ref{lem:volubs}, $x_{u_2}$ is negative, so 
    \[\lambda_2x_{u_1} \leq \sum_{v\in N^{+}(u_1)}x_v - |x_{u_2}|.\]
    Multiplying by $\lambda_2$ and using the same steps as in equation \eqref{eq:quadratic}, we have
    \[(\lambda_2^2 - d_{u_1}^{+})\le \sum_{v\in V^{+}}x_vh_2(u_1, v) - \lambda_2|x_{u_2}|.\]
    Multiplying by $\lambda_2$ again, we obtain 
    \[\lambda_2(\lambda_2^2 - d_{u_1}^{+}) \le 2t_3^{+}(u_1) + \sum_{v\in N^{+}(u_1)}x_v(d_v-1) + \sum_{w\in V^{+}}x_wh_2(u_1, w) - \lambda_2^2,\]
    where $t_3^{+}(u_1)$ is the number of triangles containing $u_1$ with all vertices in the triangle in $V^+$. 
    Therefore, 
    \[\lambda_2(\lambda_2^2 - d_{u_1}^{+}) \le 2d_{u_1}^{+} - \lambda_2^2 + O(\sqrt{n}), \]
    as $\sum_{v\in N^{+}(u_1)}x_v(d_v-1) = O(\sqrt{n})$ and $\sum_{w\in V^{+}}x_wh_2(u_1, w) = O(\sqrt{n})$. 
    Rearranging the inequality, we obtain
    \[(\lambda_2+2)d_{u_1}^{+} \ge \lambda_2^3 + \lambda_2^2 - O(\sqrt{n}),\]
    and so,
    \begin{align*}
        d_{u_1}^{+} &\ge \frac{\lambda_2^3+\lambda_2^2}{\lambda_2+2} - O(1)\\
        &=\lambda_2^2 - \lambda_2 - O(1)\\
        &=\left(\sqrt{q-1}+1 + O\left(\frac{1}{\sqrt{q-1}}\right)\right)^2 - \sqrt{q-1} - O(1)\\
        &= q - 1 + \sqrt{q-1} - O(1),
    \end{align*}
    which is a contradiction to the fact that $d_{u_1}^{+} \le d_{u_1} = q-1 + O(1)$. 
\end{proof}

 Let $P$ be the induced subgraph obtained from $G$ after deleting $u_1$ and $u_2$. Let $\bx$ be the eigenvector of $G$ corresponding to $\lambda_2$. Let $A_P$ be the adjacency matrix of $P$. 
Let $\by$ be the restriction of $\bx$ on the vertex set of $P$.
Define $(n-2)$-dimensional column vectors $\beta_1$, $\beta_2$, $\gamma_1$, and $\gamma_2$, indexed by the vertices of $P$, as follows:
\begin{align}
  \beta_1(v) &=\begin{cases}
    x_{u_1} & \mbox{ if } v\in N(u_1),\\
    0 & \mbox{ otherwise.} 
    \label{eq:beta1def}
    \end{cases}\\
  \beta_2(v) &=\begin{cases}
    x_{u_2} & \mbox{ if } v\in N(u_2),\\
    0 & \mbox{ otherwise.} 
    \label{eq:beta2def}
    \end{cases}\\
   \gamma_1(v) &=\begin{cases}
    \frac{1}{x_{u_1}} & \mbox{ if } v\in N(u_1),\\
    0 & \mbox{ otherwise.} 
    \label{eq:gamma1def}
    \end{cases}\\
  \gamma_2(v) &=\begin{cases}
    \frac{1}{x_{u_2}} & \mbox{ if } v\in N(u_2),\\
    0 & \mbox{ otherwise.} 
    \label{eq:gamma2def}
    \end{cases} 
\end{align}
Let $\beta=\beta_1+\beta_2$ and $\gamma=\gamma_1+\gamma_2$, so that
\begin{align}
  \beta(v)=\begin{cases}
    x_{u_1} & \mbox{ if } v\in N(u_1)\setminus N(u_2),\\
    x_{u_2} & \mbox{ if } v\in N(u_2)\setminus N(u_1),\\
    x_{u_1}+ x_{u_2} & \mbox{ if } v\in N(u_1)\cap N(u_2),\\
    0 & \mbox{ otherwise.}
\end{cases}\\
\gamma(v)=\begin{cases}
    \frac{1}{x_{u_1}} & \mbox{ if } v\in N(u_1)\setminus N(u_2),\\
    \frac{1}{x_{u_2}} & \mbox{ if } v\in N(u_2)\setminus N(u_1),\\
    \frac{1}{x_{u_1}}+ \frac{1}{x_{u_2}} & \mbox{ if } v\in N(u_1)\cap N(u_2),\\
    0 & \mbox{ otherwise.}
\end{cases}
\end{align}
From the eigen-equation for
$\lambda_2$, we have
\[\lambda_2\by = A_P\by + \beta.\]
Note that $\lambda_2$ is not an eigenvalue of $A_P$, as $\lambda_2 = \sqrt{n/2} + 1 + O(1/\sqrt{n})$, while $\lambda_1(A_P) \le \Delta(P) = O(1)$. Therefore, we have
    \begin{align*}
    \by
    &= (\lambda_2 I -A_P)^{-1}\beta \\
    &= \frac{1}{\lambda_2}(I-\lambda_2^{-1}A_P)^{-1}\beta \\
    &=\frac{1}{\lambda_2} \sum_{i=0}^\infty \lambda_2^{-i}A_P^i\beta \\
    &=\sum_{i=0}^\infty \lambda_2^{-(i+1)}A_P^i\beta.
    \end{align*}
Also, by the eigen-equation, we have
\begin{align*}
    \lambda_2 &=x_{u_1}^{-1}\sum_{v\in N(u_1)} x_v = \gamma_1^T \by, \\
    \lambda_2 &=x_{u_2}^{-1}\sum_{v\in N(u_2)} x_v = \gamma_2^T \by.\\
\end{align*}
Therefore, 
    \begin{align}
    \lambda_2^2 &=\sum_{i=0}^\infty \lambda_2^{-i}\gamma_1^T A_P^i\beta,
    \label{eq:gamma1}\\
    \lambda_2^2 &=\sum_{i=0}^\infty \lambda_2^{-i}\gamma_2^T A_P^i\beta.
     \label{eq:gamma2}
    \end{align}   
Taking the average, we get    
    \begin{equation}
    \lambda_2^2 =\frac{1}{2}\sum_{i=0}^\infty \lambda_2^{-i}\gamma^T A_P^i\beta.
    \label{eq:gamma}
    \end{equation}

For $i=0,1,2,\ldots,$ let $a_i=\frac{1}{2} \gamma^T A_P^i\beta$. 
For any two vertices $u$ and $v$ (with $u=v$ allowed), let $w_i(u,v)$ denote the number of $(u, v)$-walks of length $i$. Then we have
\begin{equation} 
\label{eqn:aieqn}
\gamma^T A_P^i \beta
=\sum_{(u,v) \in N(u_1)^2} w_i(u,v) +
\sum_{(u,v) \in N(u_2)^2} w_i(u,v) \\
+ \sum_{(u,v) \in N(u_1) \times N(u_2)} w_i(u,v) \left(\frac{x_{u_2}}{x_{u_1}}
+ \frac{x_{u_1}}{x_{u_2}}\right).
\end{equation}
Since $x_{u_1}$ and $x_{u_2}$ have opposite signs, we have
\[\frac{x_{u_2}}{x_{u_1}}
+ \frac{x_{u_1}}{x_{u_2}} \leq -2.\]
Thus, we have
\begin{equation}\label{eqn:aiineq}
a_i \leq \frac{1}{2}\sum_{(u,v)\in N(u_1)^2} w_i(u,v) + \frac{1}{2}
\sum_{(u,v)\in N(u_2)^2} w_i(u,v)
-\sum_{(u,v)\in N(u_1)\times N(u_2)} w_i(u,v).
\end{equation}
In particular,
\begin{align}
    a_0&= \frac{1}{2} \left(d_{u_1}+d_{u_2}+ |N(u_1)\cap N(u_2)| \left(\frac{x_{u_2}}{x_{u_1}}
+ \frac{x_{u_1}}{x_{u_2}}\right)\right)\nonumber\\
&\leq \frac{1}{2} \left(d_{u_1}+d_{u_2}-2|N(u_1)\cap N(u_2)|\right)\nonumber\\ 
&= \frac{1}{2} |N(u_1)\Delta N(u_2)|.\label{a_0ub}
\end{align}

\begin{lemma}\label{lem:series}
Consider the equation
\begin{equation*}
    \lambda^2 = a_0 +  \dss_{i=1}^{\infty} \frac{a_i}{\lambda^i}.
\end{equation*}
 The largest root $\lambda_1$ has the following series expansion:
 $$\lambda_1 = \sqrt{a_0} + c_1 + \frac{c_2}{\sqrt{a_0}} + \frac{c_3}{a_0} + \frac{c_4}{a_0^{\frac{3}{2}}} + O\left(a_0^{-2}\right).$$
 Here
\begin{align*}
c_1 &= \frac{a_1}{2a_0}\\
c_2 &= -\frac38 \left(\frac{a_1}{a_0}\right)^2 + \frac12 \frac{a_2}{a_0},
\\
c_3 &= \frac{a_1^3}{2a_0^3} -\frac{a_1a_2}{a_0^2} + \frac{a_3}{2a_0}\\
c_4 &=-\frac{105}{128} \left(\frac{a_1}{a_0}\right)^4 +\frac{35}{16} \left(\frac{a_1}{a_0}\right)^2\frac{a_2}{a_0}
-\frac{5}{8}\left(\frac{a_2}{a_0}\right)^2 -\frac{5}{4}\frac{a_1}{a_0}\frac{a_3}{a_0} +\frac{1}{2} \frac{a_4}{a_0}.
\end{align*}
 \end{lemma}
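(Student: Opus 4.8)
The plan is to regard $a_0$ as the large parameter and to derive the expansion by a rescaling followed by a comparison of coefficients; the error estimate should be understood in the regime relevant to all our applications, where $a_i/a_0$ stays bounded for each fixed $i$ and decays fast enough that the tail is negligible (in our uses $a_i/a_0 = O(2^i)$). First I would substitute $\lambda = \sqrt{a_0}\,(1+\epsilon)$ and write $s=a_0^{-1/2}$, $b_i=a_i/a_0$, which turns the equation into
\[
2\epsilon+\epsilon^2 \;=\; \sum_{i\ge 1} b_i\,s^{\,i}\,(1+\epsilon)^{-i}.
\]
The right-hand side is $O(s)$ uniformly for $|\epsilon|\le \tfrac12$ and $s$ small, so the map $G(\epsilon,s)=2\epsilon+\epsilon^2-\sum_{i\ge1}b_is^i(1+\epsilon)^{-i}$ is analytic near $(0,0)$ with $G(0,0)=0$ and $\partial_\epsilon G(0,0)=2\neq 0$. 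By the analytic implicit function theorem there is a unique analytic branch $\epsilon=\epsilon(s)$ with $\epsilon(0)=0$, and $\lambda=\sqrt{a_0}(1+\epsilon(s))$ is exactly the largest root of the original equation: the function $\lambda\mapsto \lambda^2-a_0-\sum_i a_i\lambda^{-i}$ is increasing and tends to $+\infty$ for large $\lambda$, it is negative at $\lambda=\sqrt{a_0}$, and this branch furnishes its unique zero in the window $\sqrt{a_0}+O(1)$.

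Next I would set $\epsilon(s)=e_1s+e_2s^2+e_3s^3+e_4s^4+O(s^5)$ and substitute into the displayed equation, expanding $(1+\epsilon)^{-i}=1-i\epsilon+\binom{i+1}{2}\epsilon^2-\cdots$ and collecting powers of $s$. Matching the coefficients of $s^1,s^2,s^3,s^4$ gives a triangular system solved in turn for $e_1,e_2,e_3,e_4$: one gets $2e_1=b_1$, then $2e_2=b_2-3e_1^2$, then a linear equation for $e_3$ whose right-hand side is a polynomial in $e_1,e_2,b_2,b_3$, and likewise for $e_4$ in terms of $e_1,e_2,e_3,b_2,b_3,b_4$. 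Only $b_1,\dots,b_4$ (hence only $a_1,\dots,a_4$) appear, because $\sum_{i\ge5}b_is^i(1+\epsilon)^{-i}=O(s^5)$ under the growth hypothesis and so does not influence the coefficients of $s^{\le 4}$. Re-expressing everything through $b_i=a_i/a_0$ and $s=a_0^{-1/2}$ and reading off
\[
\lambda=\sqrt{a_0}\,(1+\epsilon(s))=\sqrt{a_0}+e_1+\frac{e_2}{\sqrt{a_0}}+\frac{e_3}{a_0}+\frac{e_4}{a_0^{3/2}}+O(a_0^{-2})
\]
identifies $c_j=e_j$ and yields the claimed formulas; this final algebraic simplification is routine and, as with Lemmas~\ref{lem:specradfangph} and~\ref{lem:lamda2even}, can be confirmed with a computer algebra system.

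For the error term, analyticity of $\epsilon(s)$ together with a bound of the shape $|a_i|\le C^i a_0$ makes $\sum_{i\ge5}b_is^i(1+\epsilon)^{-i}$ and all higher Taylor coefficients of $\epsilon(s)$ uniformly controlled, so $\epsilon(s)=e_1s+\cdots+e_4s^4+O(s^5)$, whence the tail contributes $\sqrt{a_0}\cdot O(s^5)=O(a_0^{-2})$ to $\lambda$. The main obstacle is not a single conceptual hurdle but the bookkeeping: tracking precisely which powers of $s$ (equivalently, which $a_i$) feed into each $c_j$, and making the ``$O(a_0^{-2})$'' genuinely rigorous by fixing the growth regime for the $a_i$ in which the tail of the series is negligible; once that is pinned down, the extraction of $c_1,\dots,c_4$ is mechanical.
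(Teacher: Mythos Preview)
Your approach is correct and, in fact, more complete than what appears in the paper: the paper does not prove this lemma at all but simply refers to Lemma~17 in~\cite{LLLW2022} and to~\cite[Lemma~9]{LLW2022}, where the same type of calculation is carried out. Your rescaling $\lambda=\sqrt{a_0}(1+\epsilon)$, $s=a_0^{-1/2}$, $b_i=a_i/a_0$ followed by the analytic implicit function theorem and term-by-term coefficient matching is precisely the standard derivation of such expansions, and is almost certainly what those references do as well. Your explicit check $2e_1=b_1$, $2e_2=b_2-3e_1^2$ recovers $c_1,c_2$ exactly, and the pattern continues mechanically for $c_3,c_4$.

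One point worth keeping: you are right to flag the growth hypothesis on the $a_i$ (something like $|a_i|\le C^i a_0$) as what makes the tail $\sum_{i\ge 5}b_i s^i(1+\epsilon)^{-i}=O(s^5)$ legitimate and the $O(a_0^{-2})$ error genuine. The paper's statement leaves this implicit, but in every application here $a_i=2^i q+O(1)$ with $a_0=q+O(1)$, so your hypothesis holds. A tiny quibble: the function $\lambda\mapsto\lambda^2-a_0-\sum_i a_i\lambda^{-i}$ need not be globally increasing, only eventually increasing; your localisation to the window $\sqrt{a_0}+O(1)$ via the implicit function theorem already handles uniqueness of the relevant root, so the monotonicity remark is inessential.
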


 The proof of Lemma~\ref{lem:series} is justified by Lemma 17 in~\cite{LLLW2022} and is similar to calculations given in~\cite{LLLW2022} and~\cite[Lemma 9]{LLW2022}. The following lemma compares the largest roots. 

\begin{lemma}\label{lem:compare}
   Let $f(\lambda)$ and $g(\lambda)$ be two decreasing functions on an interval $I$. Suppose
   $\lambda^2=f(\lambda)$ has a unique positive root $\lambda_f$ in $I$ 
   and $\lambda^2=g(\lambda)$ has a unique root $\lambda_g$ in $I$.
   If $f(\lambda)>g(\lambda)$ on I, then $\lambda_f>\lambda_g$.
\end{lemma}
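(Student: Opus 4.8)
\textbf{Proof plan for Lemma~\ref{lem:compare}.}

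The plan is to argue by a direct comparison of the two functions at the roots, exploiting monotonicity. Define $F(\lambda) = \lambda^2 - f(\lambda)$ and $G(\lambda) = \lambda^2 - g(\lambda)$ on $I$. Since $f$ and $g$ are decreasing, both $F$ and $G$ are strictly increasing on $I$ (the $\lambda^2$ term is increasing for $\lambda > 0$, and subtracting a decreasing function preserves this), so each has at most one positive root in $I$; by hypothesis $\lambda_f$ and $\lambda_g$ are exactly these roots, i.e. $F(\lambda_f) = 0$ and $G(\lambda_g) = 0$. The hypothesis $f(\lambda) > g(\lambda)$ on $I$ translates to $F(\lambda) < G(\lambda)$ for every $\lambda \in I$.

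Now evaluate $G$ at $\lambda_f$: we have $G(\lambda_f) > F(\lambda_f) = 0 = G(\lambda_g)$. Since $G$ is strictly increasing on $I$, the inequality $G(\lambda_f) > G(\lambda_g)$ forces $\lambda_f > \lambda_g$, which is the desired conclusion. (Symmetrically, one could evaluate $F$ at $\lambda_g$ to get $F(\lambda_g) < G(\lambda_g) = 0 = F(\lambda_f)$ and again invoke monotonicity of $F$.)

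There is essentially no obstacle here; the only points requiring a word of care are (i) confirming that "decreasing" together with the $\lambda^2$ term indeed yields strict monotonicity of $F$ and $G$ on the relevant interval — which holds as long as $I \subseteq (0,\infty)$, consistent with the phrase "unique positive root" — and (ii) making sure the uniqueness hypotheses are used only to identify $\lambda_f,\lambda_g$ as the zeros of $F,G$, so that the strict monotonicity argument applies cleanly. No estimation or series manipulation is needed; this is a short order-of-magnitude-free argument that will be applied later to compare $\lambda_1$ of competing candidate graphs via their defining equations of the form in Lemma~\ref{lem:series}.
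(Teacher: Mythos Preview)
Your proof is correct and follows essentially the same idea as the paper's: both exploit the monotonicity of $\lambda \mapsto \lambda^2 - f(\lambda)$ (respectively $-g$) together with the pointwise inequality $f>g$. The only cosmetic difference is that the paper argues by contradiction (assuming $\lambda_f\le\lambda_g$ and chaining $0 = f(\lambda_f)-\lambda_f^2 > g(\lambda_f)-\lambda_f^2 \ge g(\lambda_g)-\lambda_g^2 = 0$), whereas you argue directly via $G(\lambda_f)>F(\lambda_f)=0=G(\lambda_g)$ and strict monotonicity of $G$; the content is identical.
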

\begin{proof}
    Assume towards a contradiction that $f(\lambda) > g(\lambda)$ on $I$ and $\lambda_f \le \lambda_g$. We observe
    \[
    0 = f(\lambda_f) - \lambda_f^2 > g(\lambda_f) - \lambda_f^2 \geq g(\lambda_g) - \lambda_f^2 \geq g(\lambda_g) - \lambda_g^2 = 0.
    \]
\end{proof}

The basic tools are the same for the cases when $n$ is even and $n$ is odd, but there are various technicalities which make it appropriate to deal with these two cases separately.   

\subsection{\texorpdfstring{When $n$ is even}{}}

\begin{proof}[Proof of Theorem~\ref{thm:t2}]

Let $n=2q$ and $G$ be the graph which achieves the maximum $\lambda_2$ among all connected outerplanar graphs on $n$ vertices. We would like to show that $G$ is isomorphic to $G_0=(K_1\vee P_{q-1})\!\!-\!\!(K_1\vee P_{q-1})$.
By equation~\eqref{eqn:lmb2eqn} in Lemma~\ref{lem:lamda2even}, $\lambda_2(G_0)$ satisfies the equation
$\lambda^2=f(\lambda)$, where
   \begin{equation*}
        f(\lambda) = (q-1) + \frac{2q-5}{\lambda} + \frac{4q-11}{\lambda^2} + \frac{8q-28}{\lambda^3} + \frac{16q-59}{\lambda^4} + \frac{32q-136}{\lambda^5} + O\left(\frac{q}{\lambda^6}\right).
    \end{equation*}
For $i\ge 0$, we define $a_i(G_0)$ to be the coefficient of the $\lambda^{-i}$ term in $f(\lambda)$, so that
\[a_i(G_0) = [\lambda^{-i}]f(\lambda).\]
Let $g(\lambda)=\sum_{i=0}^\infty \frac{a_i}{\lambda^i}$, where the $a_is$ are defined as in equation \eqref{eqn:aieqn} for the extremal graph $G$. Let $I=(\sqrt{n/2}+1 - \frac{d_1}{\sqrt{n}},
\sqrt{n/2}+1 + \frac{d_2}{\sqrt{n}})$, where $d_1$ and $d_2$ are sufficiently large constants. By Theorem~\ref{thm:t1} and Lemma~\ref{lem:lamda2even}, both of the
equations $\lambda^2=f(\lambda)$ and $\lambda^2=g(\lambda)$ have a unique root in $I$. We apply Lemma~\ref{lem:compare} to compare their roots $\lambda_f$ and $\lambda_g$.

We first give coarse upper bounds on $a_i$.
Since $G$ is an outerplanar graph, for $i=1,2$,
$G[{N(u_i)}]$ is a linear forest, which has at most $d_{u_i}-1$ edges.
Since $G$ is connected, there is at least one crossing edge connecting the two parts by Lemma~\ref{lem:u1u2notedge}. 
Thus, by \eqref{eqn:aiineq}
\[a_1\leq (d_{u_1}-1) + (d_{u_2}-1) -1 \leq 2q-2 +|N(u_1)\cap N(u_2)| -3 \leq 2q-3.\]

Since $G$ is outerplanar, the number of crossing edges between $N(u_1)$ and  $N(u_2)$ is at most $3$. These edges can only contribute $O(1)$ to $a_i$ for any fixed $i$. If we delete these crossing edges from $P$, we get a linear forest. They can contribute at most $2^iq$ to $a_i$.
Thus, for $i\geq 2$,  we have
\[a_i\leq 2^iq + O(1).\]

\noindent{\bf Claim 1:} $N(u_1) \cup N(u_2)$ forms a partition of $V(P).$ Furthermore, $d_{u_1} = d_{u_2} = q-1.$\vspace{0.5cm}

By \eqref{a_0ub},
\[2a_0\le d_{u_1}+ d_{u_2}- 2|N(u_1)\cap N(u_2)|= |N(u_1)\Delta N(u_2)|\leq 2q-2.\]
Equality holds if and only if $N(u_1)\cup N(u_2)$ forms a partition of $V(P)$. Suppose, towards a contradiction, that $N(u_1)\cup N(u_2)$ does not form a partition of $V(P)$. Then $2a_0 \leq 2q-3 $, and for any $\lambda\in I$, we have
\begin{align*}
    g(\lambda) &= a_0 + \frac{a_1}{\lambda} + \frac{a_2}{\lambda^2} + O\left(\frac{q}{\lambda^3}\right)\\
    &\leq \frac{1}{2}(2q-3) + \frac{2q-3}{\lambda} + \frac{4q+O(1)}{\lambda^2} + O\left(\frac{q}{\lambda^3}\right)\\
    &< (q-1) + \frac{2q-5}{\lambda} + \frac{4q-11}{\lambda^2} + O\left(\frac{q}{\lambda^3}\right)\\
    &=f(\lambda).
\end{align*}

By Lemma \ref{lem:compare}, we have $\lambda_{2,max}=\lambda_g<\lambda_f=\lambda_2(G_0),$ a contradiction. Hence, $N(u_1)\cup N(u_2)$ forms a partition of $V(P)$. By Lemma \ref{lem:degrees}, we have $d_{u_1} = d_{u_2} = q-1 $.\vspace{0.5cm}

\noindent{\bf Claim 2:} For $i=1,2$, $G[{N(u_i)}]$ is a path of length $q-2$. There is exactly one edge in 
$E(N(u_1), N(u_2))$.\vspace{0.5cm}

Otherwise, by \eqref{eqn:aiineq} we have
\[a_1< (q-2) + (q-2) -1 = 2q-5.\]
For any  $\lambda\in I$, we have
\begin{align*}
    g(\lambda) &= a_0 + \frac{a_1}{\lambda} + \frac{a_2}{\lambda^2} + \frac{a_3}{\lambda^3} +
   O\left(\frac{q}{\lambda^4}\right)\\
    &\leq q-1 + \frac{2q-6}{\lambda} + \frac{4q+O(1)}{\lambda^2} + \frac{8q+O(1)}{\lambda^3} + O\left(\frac{q}{\lambda^4}\right)\\
    &< (q-1) + \frac{2q-5}{\lambda} + \frac{4q-11}{\lambda^2} + \frac{8q-28}{\lambda^3} + O\left(\frac{q}{\lambda^4}\right)\\
    &=f(\lambda).
\end{align*}
By Lemma \ref{lem:compare}, we have $\lambda_{2,max}=\lambda_g<\lambda_f=\lambda_2(G_0),$ a contradiction. Hence, Claim 2 is proven. \vspace{0.5cm}

\noindent{\bf Claim 3}: The unique edge in $E(N(u_1), N(u_2))$ connects the ending vertices of $G[N(u_1)]$ and $G[N(u_2)]$.\vspace{0.5cm}

Denote this edge by $uv$. The contribution of $uv$ in $a_2$ is given by
$$1 -(d_u-1+d_v-1)< 3 - (2 + 2) = -1.$$
If either $u$ or $v$ are not ending vertices, we have
\[a_2 \leq a_2(G_0)-1 \leq 4q-12.\]
For any  $\lambda\in I$, we have
\begin{align*}
    g(\lambda) &= a_0 + \frac{a_1}{\lambda} + \frac{a_2}{\lambda^2} + \frac{a_3}{\lambda^3} + \frac{a_4}{\lambda^4}+ O\left(\frac{q}{\lambda^5}\right)\\
    &\leq q-1 + \frac{2q-5}{\lambda} + \frac{4q-12}{\lambda^2} + \frac{8q+O(1)}{\lambda^3} + \frac{16q+O(1)}{\lambda^4}+O\left(\frac{q}{\lambda^5}\right)\\
    &< (q-1) + \frac{2q-5}{\lambda} + \frac{4q-11}{\lambda^2} + \frac{8q-28}{\lambda^3} + \frac{16q-59}{\lambda^4} + O\left(\frac{q}{\lambda^5}\right)\\
    &=f(\lambda).
\end{align*}
By Lemma \ref{lem:compare}, we have $\lambda_{2,max}=\lambda_g<\lambda_f=\lambda_2(G_0),$ a contradiction. Thus, Claim 3 is proven, $G$ is isomorphic to $G_0$ and the proof of Theorem \ref{thm:t2} is complete.
\end{proof}



We now give the proof of Corollary~\ref{cor:c1}, which determines the maximum size of $\lambda_2$ over all outerplanar graphs on $n=2q$ vertices with $n$ sufficiently large. 

\begin{proof}[Proof of Corollary~\ref{cor:c1}]
Theorem~\ref{thm:t2} determines the unique outerplanar graph with maximum $\lambda_2$ over connected outerplanar graphs, so we only need to consider disconnected outerplanar graphs. Let $G$ be the disconnected outerplanar graph on $n=2q$ vertices with maximum $\lambda_2$ and suppose that $G$ has connected components $G_1, G_2\ldots, G_m$, labelled so that $\lambda_1(G_1) \ge \lambda_1(G_2) \ge \cdots \ge \lambda_1(G_m)$. Note that $\lambda_2(G_i) <  \lambda_2((K_1\vee P_{q-1})\!\!-\!\!(K_1\vee P_{q-1}))$ for each $i$ and for $n$ sufficiently large, so we only need to consider the case where $\lambda_2(G) = \lambda_1(G_2)$. By the result of Tait and Tobin~\cite{TT2017} on the maximum spectral radius of outerplanar graphs, this is achieved when $G_2 = K_1 \vee P_{q-1}$. Using the series expansions in Lemma~\ref{lem:specradfangph} and Lemma~\ref{lem:lamda2even}, we see that \[\lambda_1(K_1 \vee P_{q-1}) = \sqrt{q-1} + 1 + \frac{1}{2\sqrt{q-1}} - \frac{1}{q-1} - \frac{1}{8(q-1)^{3/2}} - \frac{7}{16(q-1)^{5/2}} + O\left(\frac{1}{(q-1)^{3}}\right),\] 
while \[ \lambda_2((K_1\vee P_{q-1})\!\!-\!\!(K_1\vee P_{q-1})) = \sqrt{q-1} + 1 + \frac{1}{2\sqrt{q-1}} - \frac{3}{2(q-1)} + \frac{7}{8(q-1)^{\frac32}} - \frac{2}{(q-1)^2} + O\left(\frac{1}{(q-1)^{5/2}}\right),\] so for sufficiently large $n$, 
\[\lambda_1(K_1\vee P_{q-1}) > \lambda_2((K_1\vee P_{q-1})\!\!-\!\!(K_1\vee P_{q-1})),\]
completing the proof. 
\end{proof}

\subsection{\texorpdfstring{When $n$ is odd}{}}

\begin{proof}[Proof of Theorem~\ref{thm:t3}]

Let $n=2q+1$ and $G$ be the graph that reaches the maximum $\lambda_2$ among all connected outerplanar graphs on $n$ vertices. Let $G_0$ be a graph obtained by adding a new vertex connecting two copies of $(K_1\vee P_{q-1})$.
By Cauchy's Interlacing Theorem and Lemma \ref{lem:specradfangph}, $\lambda_2(G_0)$ satisfies the equation
$\lambda^2=f(\lambda)$, where
   \begin{equation*}
        f(\lambda) = (q-1) + \frac{2q-4}{\lambda} + \frac{4q-10}{\lambda^2} + \frac{8q-24}{\lambda^3} + \frac{16q-54}{\lambda^4} + \frac{32q-120}{\lambda^5} + O\left(\frac{q}{\lambda^6}\right).
    \end{equation*}
For $i\ge 0$, we define
\[a_i(G_0) = [\lambda^{-i}]f(\lambda).\]
Let $g(\lambda)=\sum_{i=0}^\infty \frac{a_i}{\lambda^i}$, where the $a_i$s are defined as in \eqref{eqn:aieqn} for the extremal graph $G$. Let \[I=\left(\sqrt{n/2}+1 - \frac{d_1}{\sqrt{n}},
\sqrt{n/2}+1 + \frac{d_2}{\sqrt{n}}\right),\] where $d_1$ and $d_2$ are sufficiently large constants. By Theorem \ref{thm:t1} and Lemma \ref{lem:specradfangph}, both of the
equations $\lambda^2=f(\lambda)$ and $\lambda^2=g(\lambda)$ have a unique root in $I$. We apply Lemma \ref{lem:compare} to compare their roots $\lambda_f$ and $\lambda_g$.

By a similar argument as for the even case, for $i\geq 2$,  we have
\[a_i\leq 2^iq + O(1).\]
{\bf Claim 1:}  One of the following four cases must occur:
\begin{enumerate}
    \item $d_{u_1}=d_{u_2}=q-1$, and $N(u_1)\cap N(u_2)=\emptyset$.
    \item $d_{u_1}=d_{u_2}=q$, and $|N(u_1)\cap N(u_2)|=1$.
    \item $d_{u_1}=q$, $d_{u_2}=q-1$, and $N(u_1)\cap N(u_2)=\emptyset$.
    \item $d_{u_1}=q-1$, $d_{u_2}=q$, and $N(u_1)\cap N(u_2)=\emptyset$.
\end{enumerate}

Otherwise, we show $a_0\leq \frac{1}{2}(2q-3)$. Indeed, by \eqref{eqn:aiineq}
\[2a_0\le d_{u_1}+ d_{u_2}- 2|N(u_1)\cap N(u_2)|= |N(u_1)\Delta N(u_2)|\leq 2q-1.\]
If $|N(u_1)\Delta N(u_2)|=2q-1$, we get case 3 or case 4. If $|N(u_1)\Delta N(u_2)|=2q-2$, we get case 1 or case 2. If none of these cases occur, we get
\[a_0\leq \frac{2q-3}{2}. \]
For any  $\lambda\in I$, we have
\begin{align*}
    g(\lambda) &= a_0 + \frac{a_1}{\lambda} + \frac{a_2}{\lambda^2} + O\left(\frac{q}{\lambda^3}\right)\\
    &\leq \frac{1}{2}(2q-3) + \frac{2q+O(1)}{\lambda} + \frac{4q+O(1)}{\lambda^2} + O\left(\frac{q}{\lambda^3}\right)\\
    &< (q-1) + \frac{2q-4}{\lambda} + \frac{4q-10}{\lambda^2} + O\left(\frac{q}{\lambda^3}\right)\\
    &=f(\lambda).
\end{align*}
By Lemma \ref{lem:compare}, we have $\lambda_{2,max}=\lambda_g<\lambda_f=\lambda_2(G_0),$ a contradiction and Claim 1 is proven.\vspace{0.5cm}

\noindent\textit{\textbf{Case 1:}} $d_{u_1}=d_{u_2}=q-1$, and $N(u_1)\cap N(u_2)=\emptyset$.
Let $u$ be the unique vertex not in $N[u_1] \cup N[u_2]$.\vspace{0.5cm} {\bf Claim 2:} For $i = 1, 2$, $G[{N (u_i)}]$ is a path of length $q-2$. Moreover, $u$ is a cut vertex of $G$.\vspace{0.5cm}

Observe $u$ has zero contribution to $a_1$, because there are no walks of length $1$ with endpoints in $N(u_1)$ or $N(u_2)$ which include $u$. If $u$ is not a cut vertex, then,
there is at least an edge in $E(N(u_1), N(u_2))$, which contributes $-1$ to $a_1$. If either $G[{N (u_1)}]$ or $G[{N (u_2)}]$ is not $P_{q-1}$, then it contributes one less in $a_1$. Therefore, if Claim 2 does not hold, we have
\[a_1\leq (q-2) + (q-2) -1 = 2q-5.\]
For any  $\lambda\in I$, we have
\begin{align*}
    g(\lambda) &= a_0 + \frac{a_1}{\lambda} + \frac{a_2}{\lambda^2} + \frac{a_3}{\lambda^3} + O\left(\frac{q}{\lambda^4}\right)\\
    &\leq q-1 + \frac{2q-5}{\lambda} + \frac{4q+O(1)}{\lambda^2} + \frac{8q+O(1)}{\lambda^3} + O\left(\frac{q}{\lambda^4}\right)\\
    &< (q-1) + \frac{2q-4}{\lambda} + \frac{4q-10}{\lambda^2} + \frac{8q-24}{\lambda^3} + O\left(\frac{q}{\lambda^4}\right)\\
    &=f(\lambda).
\end{align*}
By Lemma \ref{lem:compare}, we have $\lambda_{2,max}=\lambda_g<\lambda_f=\lambda_2(G_0),$ a contradiction and Claim 2 is proven.\vspace{0.5cm}\\
\noindent
\textit{\textbf{Case 2:}} $d_{u_1}=d_{u_2}=q$, and $|N(u_1)\cap N(u_2)|=1$. Let $u$ be the unique vertex in $N(u_1)\cap N(u_2)$.\vspace{0.5cm}\\
\noindent
{\bf Claim 3}: For $i = 1, 2$, $G[{N (u_i)}] \setminus \{u\}$ is a path of length $q-2$. Moreover, $u$ is a cut vertex of $G$.\vspace{0.5cm}

Observe $u$ has zero contribution to $a_1$. If $u$ is not a cut vertex, then,
there is at least an edge in $E(N(u_1)\setminus\{u\} , N(u_2)\setminus\{u\})$, which contributes $-1$ to $a_1$. If either $G[N (u_1)] \setminus \{u\}$ or $G[N (u_2)] \setminus \{u\}$ is not $P_{q-1}$, then it contributes one less in $a_1$. Therefore, if Claim 2 does not hold, we have
\[a_1\leq (q-2) + (q-2) -1 = 2q-5.\]
For any  $\lambda\in I$, we have
\begin{align*}
    g(\lambda) &= a_0 + \frac{a_1}{\lambda} + \frac{a_2}{\lambda^2} + \frac{a_3}{\lambda^3} + O\left(\frac{q}{\lambda^4}\right)\\
    &\leq q-1 + \frac{2q-5}{\lambda} + \frac{4q+O(1)}{\lambda^2} + \frac{8q+O(1)}{\lambda^3} + O\left(\frac{q}{\lambda^4}\right)\\
    &< (q-1) + \frac{2q-4}{\lambda} + \frac{4q-10}{\lambda^2} + \frac{8q-24}{\lambda^3} + O\left(\frac{q}{\lambda^4}\right)\\
    &=f(\lambda).
\end{align*}
By Lemma \ref{lem:compare}, we have $\lambda_{2,max}=\lambda_g<\lambda_f=\lambda_2(G_0),$ a contradiction and Claim 3 is proven.\vspace{0.5cm}

\noindent
\textit{\textbf{Case 3:}} $d_{u_1}=q$, $d_{u_2}=q-1$, and $N(u_1)\cap N(u_2)=\emptyset$.\vspace{0.5cm}

From equation \eqref{eq:gamma1} and \eqref{eq:gamma2}, $\lambda_2$ satisfies two equations.
\begin{align}
\lambda^2&= q + \sum_{i=1}^{\infty}\frac{\sum_{(u,v) \in N(u_1)^2} w_i(u,v)
+ \sum_{(u,v) \in N(u_1) \times N(u_2)} w_i(u,v) \frac{x_{u_2}}{x_{u_1}}}{\lambda^i}, \nonumber\\
\lambda^2&= q-1 + \sum_{i=1}^{\infty}\frac{\sum_{(u,v) \in N(u_2)^2} w_i(u,v)
+ \sum_{(u,v) \in N(u_1) \times N(u_2)} w_i(u,v) \frac{x_{u_1}}{x_{u_2}}}{\lambda^i}.  
\label{eq:gamma2exp}
\end{align}
Taking the difference and solving for $\frac{x_{u_2}}{x_{u_1}}- \frac{x_{u_1}}{x_{u_2}}$, we have
\begin{align*}
   \frac{x_{u_2}}{x_{u_1}} - \frac{x_{u_1}}{x_{u_2}}
&= -\frac{1+ \sum_{i=1}^{\infty}\frac{\sum_{(u,v) \in N(u_1)^2} w_i(u,v) - \sum_{(u,v) \in N(u_2)^2} w_i(u,v)}{\lambda^i}} {\sum_{i=1}^{\infty} \frac{\sum_{(u,v) \in N(u_1) \times N(u_2)} w_i(u,v)}{\lambda_i} }  \\
&= - \frac{(1+O(\frac{1}{\lambda}))}{ \frac{|E(N(u_1), N(u_2))|}{\lambda}}\\
&= -\left(1+O\left(\frac{1}{\lambda}\right)\right) \frac{\lambda}{|E(N(u_1), N(u_2))|}.
\end{align*}
Solving for $\frac{x_{u_2}}{x_{u_1}}$, we get
 \begin{equation}\label{eqn:xu2xu1}
 \frac{x_{u_2}}{x_{u_1}} =-\left(1+O\left(\frac{1}{\lambda}\right)\right) \frac{\lambda}{|E(N(u_1), N(u_2))|}.
 \end{equation}\vspace{0.5cm}\\
\noindent{\bf Claim 4}: $G[N(u_2)]$ forms a path of length $q-2$.\vspace{0.5cm}

Otherwise, we have
\[ \sum_{(u,v) \in N(u_2)^2} w_1(u,v) =2 |E(G[{N(u_2)}])|\leq 2(q-3). \]
Let $g_2(\lambda)=\sum_{i=0}^\infty \lambda_2^{-i}\gamma_2^TA_P^i\beta$ as in equation \eqref{eq:gamma2}. 
Since $\frac{x_{u_1}}{x_{u_2}}<0$, 
using equation \eqref{eq:gamma2exp}, 

\begin{align*}
    g(\lambda)
    &\leq q-1 + \frac{2q-6}{\lambda} + \frac{4q+O(1)}{\lambda^2} + \frac{8q+O(1)}{\lambda^3} + O\left(\frac{q}{\lambda^4}\right)\\
    &< (q-1) + \frac{2q-4}{\lambda} + \frac{4q-10}{\lambda^2} + \frac{8q-24}{\lambda^3} + O\left(\frac{q}{\lambda^4}\right)\\
    & = f(\lambda). 
\end{align*}
By Lemma \ref{lem:compare}, we have $\lambda_{2,max}=\lambda_g<\lambda_f=\lambda_2(G_0),$ a contradiction and Claim 4 is proven.\vspace{0.5cm}

\noindent{\bf Claim 5}: $|E(N(u_1), N(u_2))| \in \{1, 2\}$. If $|E(N(u_1), N(u_2))|=2$,
then the two edges in $E(N(u_1), N(u_2))$ share a common vertex in $N(u_1)$.\vspace{0.5cm}

By Claim 4, the induced graph on $N(u_2)$ is a path $P_{q-1}$. Let $c= |E(N(u_1), N(u_2))|$. Let us estimate the contribution of these crossing edges in $\sum_{u,v\in N(u_2)} w_2(u,v)$. Since $G$ is a connected outerplanar graph, we have
$c\in\{1,2,3\}$.  
Let $\eta$ be the contribution of the crossing edges in $\sum_{u,v\in N(u_2)} w_2(u,v)$. We have
\[\eta=\begin{cases}
    1 & \mbox{ if } c=1,\\
    4 & \mbox{ if } c=2 \mbox{ and two crossing edges share a common vertex in } N(u_1), \\
    2 & \mbox { if } c=2 \mbox { and two crossing edges do not share a common vertex in } N(u_1),\\
    5 & \mbox { if } c=3.
\end{cases}\]

When $c=3$, the three crossing edges must form a shape of $N$, thus the contribution is $3+2=5$. Note that the contribution of $P_{q-1}$ to $\sum_{u,v\in N(u_2)} w_2(u,v)$ is
$$2(q-1)-2 + 2(q-3)=4q-10.$$

For any  $\lambda\in I$, using \eqref{eqn:xu2xu1} and \eqref{eq:gamma2} we have
\begin{align*}
    g_2(\lambda) &=\sum_{i=0}^\infty \lambda_2^{-i}\gamma_2^TA_P^i\beta\\
    &\leq q-1 + \frac{2q-4 + c \frac{x_{u_1}}{x_{u_2}}}{\lambda} + \frac{4q-10
    + \eta +O(1)\cdot \frac{x_{u_1}}{x_{u_2}} }{\lambda^2} + \frac{8q+O(1)}{\lambda^3} + O\left(\frac{q}{\lambda^4}\right)\\
     &\leq q-1 + \frac{2q-4 - \frac{c^2}{\lambda}}{\lambda} + \frac{4q-10
    + \eta} {\lambda^2} + \frac{8q+O(1)}{\lambda^3} + O\left(\frac{q}{\lambda^4}\right)\\
    &= (q-1) + \frac{2q-4}{\lambda} + \frac{4q-10 -c^2+\eta}{\lambda^2} + \frac{8q+O(1)}{\lambda^3} + O\left(\frac{q}{\lambda^4}\right)\\
     &< (q-1) + \frac{2q-4}{\lambda} + \frac{4q-10}{\lambda^2} + \frac{8q-24}{\lambda^3} + O\left(\frac{q}{\lambda^4}\right)\\
    &=f(\lambda).
\end{align*}

If Claim 5 fails, we have $\eta-c^2\leq -2$. Thus, the last inequality holds.
By Lemma \ref{lem:compare}, we have $\lambda_{2,max}=\lambda_g<\lambda_f=\lambda_2(G_0),$ a contradiction and Claim 5 is proven. Therefore, there exists a cut vertex of $G$ in $N(u_1)$ which we label $u.$\vspace{0.5cm}

\noindent{\bf Claim 6:} $G\setminus \{u\}= 2 K_1\vee P_{q-1}$.\vspace{0.5cm}

By Claim $4$, one of the components in $G\setminus \{u\}$ is $K_1\vee P_{q-1}$.
Call the other component $H$. If $H\not= K_1\vee P_{q-1}$, by Tait and Tobin's result, we have 
\[\lambda_1(H) <\lambda_1 (K_1\vee P_{q-1}).\]
Thus, we have
\begin{align*}
    \lambda_1(G\setminus \{u\}) & = \lambda_1(K_1\vee P_{q-1}),\\
    \lambda_2(G\setminus \{u\}) & = \lambda_1(H).
\end{align*}
Since two eigenvalues are not equal,
by Cauchy's Interlacing Theorem, we have
\[ \lambda_2(G_0) = \lambda_1(K_1\vee P_{q-1})=\lambda_1(G\setminus \{u\}) \ge \lambda_2(G)\ge\lambda_2(G\setminus \{u\})=\lambda_1(H),\]
and the middle inequalities cannot be equalities simultaneously. Thus, $\lambda_2(G)$ is not maximal if $H\neq K_1\vee P_{q-1}$.\vspace{0.5cm}

\noindent\textit{\textbf{Case 4:}} $d_{u_1}=q-1$, $d_{u_2}=q$, and $N(u_1)\cap N(u_2)=\emptyset$.
This is symmetric to Case 3.\vspace{0.5cm}

Therefore, $G$ always contains a cut vertex $u$ such that $G\setminus \{u\}$ is two disjoint copies of $K_1\vee P_{q-1}$.
The proof of Theorem \ref{thm:t3} is finished.
\end{proof}

\section{Maximum \texorpdfstring{$\lambda_2$}{} for 2-connected outerplanar graphs}
We first compute the series expansion of $\lambda_2$ for the conjectured extremal graph $(K_1\vee P_{\lfloor \frac{n}{2}\rfloor-1})\diamond(K_1\vee P_{\lceil \frac{n}{2}\rceil-1})$. 
\begin{lemma}
Let $G= (K_1\vee P_{\lfloor \frac{n}{2}\rfloor-1})\diamond(K_1\vee P_{\lceil \frac{n}{2}\rceil-1})$. Then:
\begin{enumerate}
    \item For $n=2q$, $\lambda_2(G)$ satisfies the following equation.
    \begin{equation}
    \lambda^2 = (q-1) + \frac{2q-6}{\lambda} + \frac{4q-14}{\lambda^2} + \frac{8q-32}{\lambda^3} + \frac{16q-70}{\lambda^4}+\frac{32q-152}{\lambda^5} 
    +\frac{64q-324}{\lambda^6} +O\left(\frac{q}{\lambda^7}\right).
    \label{eq:2conneven}
\end{equation}
In particular, we have
\begin{equation*}
    \lambda_2 = \sqrt{q-1} + 1 + \frac{1}{2\sqrt{q-1}} - \frac{2}{q-1} + \frac{7}{8(q-1)^{3/2}}  + \frac{113}{16(q-1)^{5/2}} + O\left(\frac{1}{(q-1)^{3}}\right).
\end{equation*}

 \item For $n=2q+1$, $\lambda_2(G)$ satisfies the following equation.
    \begin{equation}
    \lambda^2 = 
    (q-1) + \frac{2q-4}{\lambda} + \frac{4q-12}{\lambda^2} + \frac{8q-32}{\lambda^3} + \frac{16q-64}{\lambda^4}+\frac{32q-112}{\lambda^5} +\frac{64q-232}{\lambda^6} +O\left(\frac{q}{\lambda^7}\right).
     \label{eq:2connodd}
\end{equation}
In particular, we have
\begin{equation*}
    \lambda_2 = \sqrt{q-1} + 1 + \frac{1}{2\sqrt{q-1}} - \frac{1}{q-1} - \frac{9}{8(q-1)^{3/2}} -\frac{87}{16(q-1)^{5/2}}+ O\left(\frac{1}{(q-1)^{3}}\right)
\end{equation*}
\end{enumerate}
\end{lemma}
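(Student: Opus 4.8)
The plan is to follow the strategy of Lemma~\ref{lem:lamda2even}: delete the two hub vertices of $G$, solve for the $\lambda_2$-eigenvector on the remaining (essentially path-like) graph, convert the two hub eigen-equations into a single scalar equation for $\lambda_2$ whose coefficients count short walks, and finally read off the series expansion from Lemma~\ref{lem:series}. Write $a,b$ for the hubs of the two fans; these are the only vertices of non-constant degree, $ab\notin E(G)$, and $N(a)\cap N(b)=\emptyset$. Let $P=G-\{a,b\}$ with adjacency matrix $A_P$. Since $\Delta(P)\le 3$ we have $\lambda_1(A_P)<3$, while Cauchy interlacing gives $\lambda_2(G)\ge\lambda_1\bigl(K_1\vee P_{\lfloor n/2\rfloor-3}\bigr)=\sqrt{q-1}+1+O(q^{-1/2})$ — deleting from one fan the two vertices incident to the crossing edges exhibits a disjoint union of two fans as an induced subgraph — together with $\lambda_3(G)\le\lambda_1(P)=O(1)$, so $\lambda_2$ is simple. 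Let $\bx$ be its eigenvector and $\by$ its restriction to $V(P)$. On $V(P)$ the eigen-equation reads $\lambda_2\by=A_P\by+\beta$ with $\beta=x_a\mathbf{1}_{N(a)}+x_b\mathbf{1}_{N(b)}$, so $\by=(\lambda_2 I-A_P)^{-1}\beta=\sum_{i\ge0}\lambda_2^{-(i+1)}A_P^i\beta$ (convergent since $\lambda_2>\lambda_1(A_P)$), and the eigen-equations at $a$ and $b$ give $\lambda_2 x_a=\mathbf{1}_{N(a)}^{T}\by$ and $\lambda_2 x_b=\mathbf{1}_{N(b)}^{T}\by$.

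For $n=2q$ the two fans are isomorphic, so $G$ carries an order-two automorphism $\sigma$ exchanging them. As $\lambda_2$ is simple, $\bx$ is an eigenvector of $\Pi_\sigma$; it cannot be $\sigma$-fixed, for then $\lambda_2$ would be a non-Perron eigenvalue of the quotient of $G$ by $\sigma$, a fan-like graph on $q$ vertices all of whose non-Perron eigenvalues are $O(1)$, contradicting $\lambda_2=\sqrt{q-1}+O(1)$. Hence $\Pi_\sigma\bx=-\bx$, i.e.\ $x_a=-x_b$; normalising $x_a=1=-x_b$ gives $\beta=\mathbf{1}_{N(a)}-\mathbf{1}_{N(b)}$ and $\beta^{T}\by=2\lambda_2$, so
\[
\lambda_2^{2}=\tfrac12\sum_{i\ge0}\lambda_2^{-i}\,\beta^{T}A_P^{i}\beta ,
\]
where $\beta^{T}A_P^{i}\beta$ equals twice the number of length-$i$ walks in $P$ with both ends in $N(a)$ minus twice the number with one end in $N(a)$ and the other in $N(b)$ (using $\sigma$ to identify the two within-fan counts). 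Because $P$ is two long paths joined through a single $4$-cycle, for each fixed $i$ all but $O(1)$ of these walks stay inside one of the two paths, so each $\beta^{T}A_P^{i}\beta$ is $2^{i}(q-1)$ plus an explicit constant; computing the endpoint and $4$-cycle corrections for $0\le i\le 6$ (so $\beta^{T}\beta=2q-2$, $\beta^{T}A_P\beta=4q-12$, and so on) yields exactly equation~\eqref{eq:2conneven}. Lemma~\ref{lem:series} (or the equivalent symbolic computation) then gives the stated expansion of $\lambda_2$.

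For $n=2q+1$ there is no symmetry, so I take $a$ to be the hub of the larger fan ($d_a=q$) and $b$ that of the smaller one ($d_b=q-1$), and with $M=(\lambda_2 I-A_P)^{-1}$ I set $p_{aa}=\mathbf{1}_{N(a)}^{T}M\mathbf{1}_{N(a)}$, $p_{bb}=\mathbf{1}_{N(b)}^{T}M\mathbf{1}_{N(b)}$, $p_{ab}=\mathbf{1}_{N(a)}^{T}M\mathbf{1}_{N(b)}$ — each a power series in $\lambda_2^{-1}$ whose $\lambda_2^{-(k+1)}$-coefficient counts length-$k$ walks in $P$ with the prescribed endpoints. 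The two hub equations become $(\lambda_2-p_{aa})x_a=p_{ab}x_b$ and $(\lambda_2-p_{bb})x_b=p_{ab}x_a$; here $x_a,x_b\ne0$, since $x_a=x_b=0$ would make $\bx$ an $A_P$-eigenvector, and $x_b=0\ne x_a$ would force $p_{ab}=0$, impossible as $p_{ab}=2\lambda_2^{-2}+\cdots$. Eliminating the ratio $x_a/x_b$ produces the scalar equation
\[
\lambda_2^{2}=\lambda_2 p_{bb}+\frac{\lambda_2 p_{ab}^{2}}{\lambda_2-p_{aa}} .
\]
Here $\lambda_2 p_{bb}=(q-1)+(2q-4)\lambda_2^{-1}+\cdots$ supplies the main terms, while $p_{ab}=O(\lambda_2^{-2})$ forces $\lambda_2 p_{ab}^{2}/(\lambda_2-p_{aa})=O(\lambda_2^{-4})$; expanding $(\lambda_2-p_{aa})^{-1}=\lambda_2^{-1}\sum_{j\ge0}(p_{aa}/\lambda_2)^{j}$, substituting the walk counts, and collecting through the $\lambda_2^{-6}$ term produces equation~\eqref{eq:2connodd}, from which Lemma~\ref{lem:series} again yields the stated expansion.

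The main obstacle is the combinatorial bookkeeping. Computing $\beta^{T}A_P^{i}\beta$ (even case) and the coefficients of $p_{aa},p_{bb},p_{ab}$ (odd case) for $i$ up to $6$ requires carefully enumerating the $O(1)$ walks that turn around inside — or pass through — the $4$-cycle where the fans are glued, together with the corrections at the two path endpoints; in the odd case one must additionally keep enough terms of $p_{ab}^{2}$ and of the geometric series for $(\lambda_2-p_{aa})^{-1}$ to reach order $\lambda_2^{-6}$. This is precisely the step carried out by the symbolic computation referenced in the statement; the surrounding structure — convergence of the Neumann series, simplicity of $\lambda_2$, the sign pattern $x_ax_b<0$, and the final passage from the scalar equation to the series via Lemma~\ref{lem:series} — is routine.
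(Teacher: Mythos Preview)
Your even-case argument is correct and matches the paper's: once you know $x_a=-x_b$ (you use the automorphism, the paper uses $F_1=F_2$ in the quadratic), the equation $\lambda_2^2=\tfrac12\sum_{i\ge0}\lambda_2^{-i}\beta^TA_P^i\beta$ is exactly $\lambda^2=F_2(\lambda)-D(\lambda)$, and the walk-count bookkeeping is as you describe.

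The odd case has a genuine gap. Your scalar equation $(\lambda_2-p_{aa})(\lambda_2-p_{bb})=p_{ab}^2$ is correct and equivalent to the paper's $(\lambda^2-F_1)(\lambda^2-F_2)=D^2$ via $F_i=\lambda p_{ii}$, $D=\lambda p_{ab}$. But your size estimate for the correction term is wrong: you claim $\lambda_2 p_{ab}^2/(\lambda_2-p_{aa})=O(\lambda_2^{-4})$, implicitly treating $\lambda_2-p_{aa}$ as order $1$ or larger. In fact $\lambda_2-p_{aa}=(\lambda_2^2-F_1)/\lambda_2$, and since $a$ is the hub of the \emph{larger} fan, $F_1(\lambda_2)=q+(2q-2)\lambda_2^{-1}+\cdots$ while $\lambda_2^2=(q-1)+(2q-4)\lambda_2^{-1}+\cdots$, so $\lambda_2^2-F_1=-1+O(\lambda_2^{-1})$ and $\lambda_2-p_{aa}=O(\lambda_2^{-1})$. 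Hence the correction is $O(\lambda_2^{-2})$, not $O(\lambda_2^{-4})$; indeed the $4q-12$ coefficient in \eqref{eq:2connodd} is $(4q-8)-4$, where $-4$ comes precisely from this term. For the same reason your geometric expansion $(\lambda_2-p_{aa})^{-1}=\lambda_2^{-1}\sum_{j\ge0}(p_{aa}/\lambda_2)^j$ diverges: $p_{aa}/\lambda_2=F_1/\lambda_2^2=1+\lambda_2^{-2}+\cdots>1$ at $\lambda=\lambda_2$.

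The paper fixes this by solving the quadratic for $\lambda^2$ and taking the smaller root,
\[
\lambda^2=\tfrac12\Bigl((F_1+F_2)-\sqrt{(F_1-F_2)^2+4D^2}\Bigr),
\]
then expanding the square root using $4D^2/(F_1-F_2)^2=O(\lambda^{-2})$ as the small parameter (since $F_1-F_2=1+O(\lambda^{-1})$). Equivalently, you could rewrite your correction as $-D^2/(F_1-\lambda_2^2)$ and expand around $F_1-\lambda_2^2\approx F_1-F_2$, but you cannot expand $(\lambda_2-p_{aa})^{-1}$ directly in powers of $p_{aa}/\lambda_2$.
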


\begin{proof}
   Let $u_1$ be the vertex of the largest degree and $u_2$ be the vertex of the second largest degree in $G$. 
    Let $P$ be the induced subgraph obtained from $G$ after deleting $u_1$ and $u_2$. Let $\bx$ be the eigenvector of $G$ corresponding to $\lambda_2$. Note that $\bx$ is a simple eigenvector because by Cauchy's Interlacing Theorem $\lambda_2$ is a simple eigenvalue, as $\lambda_2(G) = \Omega(\sqrt{n})$, while $\lambda_3(G) \le \lambda_1(G\setminus\{u_1, u_2\}) \le \Delta(G\setminus\{u_1, u_2\}) \le 4$. 
Let $\beta_1$, $\beta_2$, $\gamma_1$, $\gamma_2$, $\beta$, $\gamma$ defined as in equations \eqref{eq:beta1def}-\eqref{eq:gamma}. 
equation \eqref{eq:gamma1} can be re-written as
\[\lambda^2= F_1(\lambda) + D(\lambda) \frac{x_{u_2}}{x_{u_1}}.\]
Equation \eqref{eq:gamma2} can be re-written as
\[\lambda^2= F_2(\lambda) + D(\lambda) \frac{x_{u_1}}{x_{u_2}}.\]
Here
\begin{align*}
F_1(\lambda)&=\sum_{i=0}^{\infty}\frac{\sum_{(u,v) \in N(u_1)^2} w_i(u,v)}{\lambda^i}, \\
F_2(\lambda)&=\sum_{i=0}^{\infty}\frac{\sum_{(u,v) \in N(u_2)^2} w_i(u,v)}{\lambda^i}, \\
D(\lambda)&=\sum_{i=1}^{\infty}\frac{\sum_{(u,v) \in N(u_1) \times N(u_2)} w_i(u,v)}{\lambda^i}.
\end{align*}
Cancelling $\frac{x_{u_2}}{x_{u_1}}$, we get
\[ (\lambda^2-F_1(\lambda))
(\lambda^2-F_2(\lambda)) = D(\lambda)^2,
\]
which implies
\[\lambda^2 =\frac{1}{2}
\left( (F_1(\lambda) +F_2(\lambda)) 
- \sqrt{ (F_1(\lambda) -F_2(\lambda))^2 + 4 D(\lambda)^2}
\right).
\]
Using SageMath, we can calculate
\begin{align*}
    F_2(\lambda) &=  (q-1) + \frac{2q-4}{\lambda} + \frac{4q-8}{\lambda^2} + \frac{8q-16}{\lambda^3} + \frac{16q-28}{\lambda^4}+\frac{32q-48}{\lambda^5}
    +\frac{64q-64}{\lambda^6}+O\left(\frac{q}{\lambda^7}\right).\\
    D(\lambda)&=  \frac{2}{\lambda} + \frac{6}{\lambda^2} + \frac{16}{\lambda^3} + \frac{42}{\lambda^4}+\frac{104}{\lambda^5} 
    +\frac{260}{\lambda^6}+O\left(\frac{q}{\lambda^7}\right).
\end{align*}
When $n=2q$, we have $F_1(\lambda)=F_2(\lambda)$ and
\[\lambda^2=F_2(\lambda)-D(\lambda).\]
This implies equation \eqref{eq:2conneven}. Using SageMath, we can calculate
\begin{equation*}
    \lambda_2 = \sqrt{q-1} + 1 + \frac{1}{2\sqrt{q-1}} - \frac{2}{q-1} + \frac{7}{8(q-1)^{3/2}}  + \frac{113}{16(q-1)^{5/2}} + O\left(\frac{1}{(q-1)^{3}}\right).
\end{equation*}
When $n=2q+1$, we have
\begin{align*}
    F_1(\lambda)&=  q + \frac{2q-2}{\lambda} + \frac{4q-4}{\lambda^2} + \frac{8q-8}{\lambda^3} + \frac{16q-12}{\lambda^4}+\frac{32q-16}{\lambda^5}  +\frac{64q}{\lambda^6}+O\left(\frac{q}{\lambda^7}\right).
\end{align*}
Therefore,
\begin{align*}
    \lambda^2 &=\frac{1}{2}
\left( (F_1(\lambda) +F_2(\lambda)) 
- \sqrt{ (F_1(\lambda) -F_2(\lambda))^2 + 4 D(\lambda)^2}
\right)\\
&= \frac{1}{2}
\left(2q-1 + \frac{4q-6}{\lambda} + \frac{8q-12}{\lambda^2} + \frac{16q-24}{\lambda^3} + \frac{32q-40}{\lambda^4}+\frac{64q-64}{\lambda^5} +\frac{128q-64}{\lambda^6} 
 +O\left(\frac{q}{\lambda^7}\right)\right.\\
&\hspace*{4mm}
\left.
-\sqrt{
\left(1+ \frac{2}{\lambda} + \frac{4}{\lambda^2} + \frac{8}{\lambda^3} + \frac{16}{\lambda^4}+\frac{32}{\lambda^5} +\frac{64}{\lambda^6}+O\left(\frac{1}{\lambda^7}\right)\right)^2
+ 4\left(\frac{2}{\lambda} + \frac{6}{\lambda^2} + \frac{16}{\lambda^3} + \frac{42}{\lambda^4}+\frac{104}{\lambda^5} +\frac{260}{\lambda^6} +O\left(\frac{q}{\lambda^7}\right)\right)^2
}
\right) \\
&= (q-1) + \frac{2q-4}{\lambda} + \frac{4q-12}{\lambda^2} + \frac{8q-32}{\lambda^3} + \frac{16q-64}{\lambda^4}+\frac{32q-112}{\lambda^5} +\frac{64q-232}{\lambda^6} +O\left(\frac{q}{\lambda^7}\right).
\end{align*}
Using SageMath, we can calculate the series expansion of $\lambda_2$.

\begin{equation*}
    \lambda_2 = \sqrt{q-1} + 1 + \frac{1}{2\sqrt{q-1}} - \frac{1}{q-1} - \frac{9}{8(q-1)^{3/2}} -\frac{87}{16(q-1)^{5/2}}+ O\left(\frac{1}{(q-1)^{3}}\right).
\end{equation*}
\end{proof}

\begin{proof}[Proof of Theorem \ref{thm:t4}]

Let $G$ be the graph which achieves the maximum $\lambda_2$ among all 2-connected outerplanar graphs on $n$ vertices. Let $G_0$ be the conjectured extremal 2-connected outerplanar graph on $n$ vertices.
We follow the proofs of Theorems \ref{thm:t2} and \ref{thm:t3}. Assume $\lambda_2(G)$ satisfies the equation
\[\lambda^2=g(\lambda),\]
where $g(\lambda) = \sum_{i=0}^{\infty}\frac{a_i}{\lambda^i}$ and $a_i$ is defined as in \eqref{eqn:aieqn} for $G$,
while $\lambda_2(G_0)$ satisfies the equation
\[\lambda^2=f(\lambda),\]
where $f(\lambda)$ is the right-hand side of \eqref{eq:2conneven} or \eqref{eq:2connodd} depending on the parity of $n$. In either case, we define
\[a_i(G_0) = [\lambda^{-i}]f(\lambda).\]
It is sufficient to compare $f(\lambda)$ and $g(\lambda)$ in a small interval
$I= \left(\sqrt{n/2}+1 - \frac{c_1}{\sqrt{n}},
\sqrt{n/2}+1 + \frac{c_2}{\sqrt{n}}\right)$.\vspace{0.5cm}

\noindent\textit{\textbf{Case $n=2q$:}}
We have
   \begin{equation*} \label{eq:t2f_even_2_conn}
        f(\lambda) = (q-1) + \frac{2q-6}{\lambda} + \frac{4q-14}{\lambda^2} + \frac{8q-32}{\lambda^3} + \frac{16q-70}{\lambda^4} + \frac{32q-152}{\lambda^5} + O\left(\frac{q}{\lambda^6}\right).
    \end{equation*}
Consider the following claims.\vspace{0.5cm}

\noindent{\bf Claim 1:} $d_{u_1}=d_{u_2}=q-1$ and $N(u_1)\cap N(u_2)=\emptyset$.\vspace{0.5cm}

The proof is identical to the proof of Claim 1 in Theorem \ref{thm:t2}.\vspace{0.5cm}

\noindent{\bf Claim 2:} For $i=1,2$, $G[{N(u_i)}]$ is a path of length $q-2$. There are exactly two edges in 
$E(N(u_1), N(u_2))$. In particular, the two edges form a matching.\vspace{0.5cm}

Otherwise, we have
\[a_1< (q-2) + (q-2) -2 = 2q-6.\]
For any  $\lambda\in I$, we have
\begin{align*}
    g(\lambda) &= a_0 + \frac{a_1}{\lambda} + \frac{a_2}{\lambda^2} + \frac{a_3}{\lambda^3} +
    O\left(\frac{q}{\lambda^4}\right)\\
    &\leq q-1 + \frac{2q-7}{\lambda} + \frac{4q+O(1)}{\lambda^2} + \frac{8q+O(1)}{\lambda^3} + O\left(\frac{q}{\lambda^4}\right)\\
    &< (q-1) + \frac{2q-6}{\lambda} + \frac{4q-14}{\lambda^2} + \frac{8q-32}{\lambda^3} + O\left(\frac{q}{\lambda^4}\right)\\
    &=f(\lambda).
\end{align*}
By Lemma \ref{lem:compare}, we have $\lambda_{2,max}=\lambda_g<\lambda_f=\lambda_2(G_0),$ a contradiction and Claim 2 is proven. Since $G$ is 2-connected, the two crossing edges must form a matching.\vspace{0.5cm}

\noindent{\bf Claim 3}: The ending vertices of the two edges in $E(N(u_1), N(u_2))$ have degrees
$1,1,2,2$ in $P$.\vspace{0.5cm}

Denote the two edges by $v_1w_1$ and $v_2w_2$.
The contribution of $v_iw_i$ in $a_2$ is given by
$$1 -\left(d^P_{v_i}+d^P_{w_i}\right).$$
We have
\[a_2 \leq 4q-8 -\left(d^P_{v_1}+d^P_{w_1}
+d^P_{v_2}+d^P_{w_2}\right).\]
Note that since $G$ is outerplanar, at most two vertices in $v_1, w_1, v_2, w_2$ are ending vertices. Thus, we have
\[a_2 \leq 4q-8 -\left(d^P_{v_1}+d^P_{w_1}
+d^P_{v_2}+d^P_{w_2}\right)\leq 4q-14,\]
with equality if and only if the
degrees $d^P_{v_1}, d^P_{w_1}, d^P_{v_2}, \text{and } d^P_{w_2}$
are $1,1,2, \text{and }2$, respectively. If Claim 3 fails, we have
\[a_2\leq 4q-15.\]
For any  $\lambda\in I$, we have
\begin{align*}
    g(\lambda) &= a_0 + \frac{a_1}{\lambda} + \frac{a_2}{\lambda^2} + \frac{a_3}{\lambda^3} + \frac{16q+O(1)}{\lambda^3} +
    O\left(\frac{q}{\lambda^4}\right)\\
    &\leq q-1 + \frac{2q-6}{\lambda} + \frac{4q-15}{\lambda^2} + \frac{8q+O(1)}{\lambda^3} +  \frac{16q+O(1)}{\lambda^4} +
    O\left(\frac{q}{\lambda^4}\right)\\
    &< (q-1) + \frac{2q-6}{\lambda} + \frac{4q-14}{\lambda^2} + \frac{8q-32}{\lambda^3} 
    + \frac{16q-70}{\lambda^4} + 
    O\left(\frac{q}{\lambda^4}\right)\\
    &=f(\lambda).
\end{align*}
By Lemma \ref{lem:compare}, we have $\lambda_{2,max}=\lambda_g<\lambda_f=\lambda_2(G_0),$ a contradiction and Claim 3 is proven.\vspace{0.5cm}

From Claims 1-3, there are only two graphs left. Call the other graph $G'_0$. Both $G_0$ and $G'_0$ have the same values on $a_0$, $a_1$, and $a_2$. They only differ by $1$ on $a_3$. The graph $G'_0$ contains one more negative $3$-walk, which is highlighted in red in Figure 
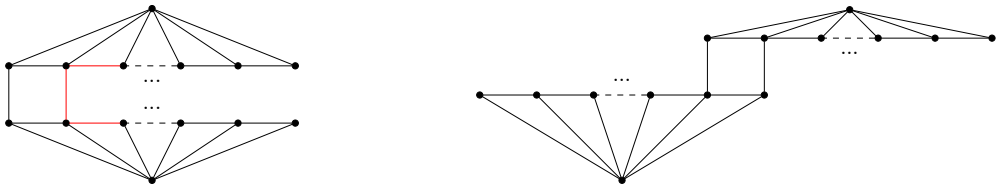
\begin{figure}[hb]
    \centering
 \resizebox{4cm}{!}{\begin{tikzpicture}[scale=1, Wvertex/.style={circle, draw=black, fill=white, scale=1}, bvertex/.style={circle, draw=black, fill=black, scale=0.3}]

\node [bvertex] (u0) at (-0.5, -1.5) {};
\node [bvertex] (u1) at (-3, -0.5) {};
\node [bvertex] (u2) at (-2, -0.5) {};
\node [bvertex] (u3) at (-1, -0.5) {};
\node [bvertex] (u4) at (0, -0.5) {};
\node [bvertex] (u5) at (1, -0.5) {};
\node [bvertex] (u6) at (2, -0.5) {};

\node [bvertex] (v0) at (-0.5, 1.5) {};
\node [bvertex] (v1) at (-3, 0.5) {};
\node [bvertex] (v2) at (-2, 0.5) {};
\node [bvertex] (v3) at (-1, 0.5) {};
\node [bvertex] (v4) at (0, 0.5) {};
\node [bvertex] (v5) at (1, 0.5) {};
\node [bvertex] (v6) at (2, 0.5) {};

\draw (u0) -- (u1);
\draw (u0) -- (u2);
\draw (u0) -- (u3);
\draw (u0) -- (u4);
\draw (u0) -- (u5);
\draw (u0) -- (u6);

\draw (u1) -- (u2);
\draw[color=red, ] (u3) -- (u2); 
\draw [dashed] (u4) -- (u3) node [midway, fill=white, above=3pt] {$...$};
\draw (u5) -- (u4);
\draw (u6) -- (u5);

\draw (v0) -- (v1);
\draw (v0) -- (v2);
\draw (v0) -- (v3);
\draw (v0) -- (v4);
\draw (v0) -- (v5);
\draw (v0) -- (v6);

\draw (v1) -- (v2);
\draw[color=red] (v3) -- (v2); 
\draw [dashed] (v4) -- (v3) node [midway, fill=white, below=3pt] {$...$};
\draw (v5) -- (v4);
\draw (v6) -- (v5);

\draw (u1) -- (v1);
\draw[color=red] (u2) -- (v2);
\end{tikzpicture}} 
\hfil
  \resizebox{7cm}{!}{\begin{tikzpicture}[scale=1, Wvertex/.style={circle, draw=black, fill=white, scale=1}, bvertex/.style={circle, draw=black, fill=black, scale=0.3}]

\node [bvertex] (u0) at (-0.5, -0.5) {};
\node [bvertex] (u1) at (-3, 1) {};
\node [bvertex] (u2) at (-2, 1) {};
\node [bvertex] (u3) at (-1, 1) {};
\node [bvertex] (u4) at (0, 1) {};
\node [bvertex] (u5) at (1, 1) {};
\node [bvertex] (u6) at (2, 1) {};

\node [bvertex] (v0) at (3.5, 2.5) {};
\node [bvertex] (v1) at (1, 2) {};
\node [bvertex] (v2) at (2, 2) {};
\node [bvertex] (v3) at (3, 2) {};
\node [bvertex] (v4) at (4, 2) {};
\node [bvertex] (v5) at (5, 2) {};
\node [bvertex] (v6) at (6, 2) {};

\draw (u0) -- (u1);
\draw (u0) -- (u2);
\draw (u0) -- (u3);
\draw (u0) -- (u4);
\draw (u0) -- (u5);
\draw (u0) -- (u6);

\draw (u1) -- (u2);
\draw (u3) -- (u2); 
\draw [dashed] (u4) -- (u3) node [midway, above=3pt] {$...$};
\draw (u5) -- (u4);
\draw (u6) -- (u5);

\draw (v0) -- (v1);
\draw (v0) -- (v2);
\draw (v0) -- (v3);
\draw (v0) -- (v4);
\draw (v0) -- (v5);
\draw (v0) -- (v6);

\draw (v1) -- (v2);
\draw (v3) -- (v2); 
\draw [dashed] (v4) -- (v3) node [midway, below=3pt] {$...$};
\draw (v5) -- (v4);
\draw (v6) -- (v5);

\draw (u6) -- (v2);
\draw (u5) -- (v1);
\end{tikzpicture}}
    \caption{The graphs $G'_0$ (on the left) and $G_0$ (on the right).} 
    \label{fig:G_0'andG_0}
\end{figure}
\ref{fig:G_0'andG_0}.
We have
\[a_3(G_0')=a_3(G_0)-1=8q-33.\]
For any  $\lambda\in I$, we have
\begin{align*}
    g(\lambda) &= a_0 + \frac{a_1}{\lambda} + \frac{a_2}{\lambda^2} + \frac{a_3}{\lambda^3} + \frac{16q+O(1)}{\lambda^4} + \frac{32q+O(1)}{\lambda^5} + 
    O\left(\frac{q}{\lambda^6}\right)\\
    &\leq q-1 + \frac{2q-6}{\lambda} + \frac{4q-14}{\lambda^2} + \frac{8q-33}{\lambda^3} + 
    \frac{16q+O(1)}{\lambda^4} + \frac{32q+O(1)}{\lambda^5} + 
    O\left(\frac{q}{\lambda^6}\right)\\
    &< (q-1) + \frac{2q-6}{\lambda} + \frac{4q-14}{\lambda^2} + \frac{8q-32}{\lambda^3} 
    + \frac{16q-70}{\lambda^4} + \frac{32q-152}{\lambda^5} + 
    O\left(\frac{q}{\lambda^6}\right)\\
    &=f(\lambda).
\end{align*}
By Lemma \ref{lem:compare}, we have $\lambda_{2,max}=\lambda_g<\lambda_f=\lambda_2(G_0),$ a contradiction. The proof of the even case is finished.\vspace{0.5cm}

\noindent
\textit{\textbf{Case $n=2q+1$:}} We have
\[f(\lambda)=(q-1) + \frac{2q-4}{\lambda} + \frac{4q-12}{\lambda^2} + \frac{8q-32}{\lambda^3} + \frac{16q-64}{\lambda^4}+\frac{32q-112}{\lambda^5} + O\left(\frac{q}{\lambda^6}\right).\]

\noindent{\bf Claim 4:}  One of the following four cases must occur:
\begin{enumerate}
    \item $d_{u_1}=d_{u_2}=q-1$, and $N(u_1)\cap N(u_2)=\emptyset$.
    \item $d_{u_1}=d_{u_2}=q$, and $|N(u_1)\cap N(u_2)|=1$.
    \item $d_{u_1}=q$, $d_{u_2}=q-1$, and $N(u_1)\cap N(u_2)=\emptyset$.
    \item $d_{u_1}=q-1$, $d_{u_2}=q$, and $N(u_1)\cap N(u_2)=\emptyset$.
\end{enumerate}

The proof is identical to the proof of Claim 1 in the proof of Theorem \ref{thm:t3}.
Again, we can show that there is a cut vertex in both Case 1 and Case 2, which cannot occur since $G$ is 2-connected.
Cases 3 and 4 are symmetric. From now on, we assume
$d_{u_1}=q$, $d_{u_2}=q-1$, and $N(u_1)\cap N(u_2)=\emptyset$.\vspace{0.5cm}

\noindent{\bf Claim 5}: $G[{N(u_2)}]$ forms a path of length $q-2$.\vspace{0.5cm}

Otherwise, we have
\[ \sum_{(u,v) \in N(u_2)^2} w_1(u,v) =2 |E(G[{N(u_2)}])|\leq 2(q-3). \]
Let $g_2(\lambda)=\sum_{i=0}^\infty \lambda_2^{-i}\gamma_2'A_P^i\beta$ as in equation \eqref{eq:gamma2}.
Since $\frac{x_{u_1}}{x_{u_2}}<0$, 
using equation \eqref{eq:gamma2exp}, 
we have
\begin{align*}
    g_2(\lambda)
    &\leq q-1 + \frac{2q-6}{\lambda} + \frac{4q+O(1)}{\lambda^2} + \frac{8q+O(1)}{\lambda^3} + O\left(\frac{q}{\lambda^4}\right)\\
    &< (q-1) + \frac{2q-4}{\lambda} + \frac{4q-12}{\lambda^2} + \frac{8q-32}{\lambda^3} + O\left(\frac{q}{\lambda^4}\right)\\
    &=f(\lambda).
\end{align*}
By Lemma \ref{lem:compare}, we have $\lambda_{2,max}=\lambda_g<\lambda_f=\lambda_2(G_0),$ a contradiction and Claim 5 is proven.\vspace{0.5cm}

\noindent{\bf Claim 6}: $E(N(u_1), N(u_2))$ consists of two parallel edges.\vspace{0.5cm}

By Claim 5, the induced graph on $N(u_2)$ is a path $P_{q-1}$. Let $c= |E(N(u_1), N(u_2))|$. 
Since $G$ is 2-connected outerplanar graph,
we have $c=2$ or $c = 3$. When $c=2$, the two crossing edges form a matching.
When $c=3$, the three crossing edges must form a shape of $N$.

Let us estimate the contribution of these crossing edges in $\sum_{u,v\in N(u_2)} w_2(u,v)$.
Let $\eta$ be the contribution of crossing edges in $\sum_{u,v\in N(u_2)} w_2(u,v)$. We have
\[\eta=\begin{cases}
    2 & \mbox { if } c=2,\\
    5 & \mbox { if } c=3.
\end{cases}\]
Note the contribution of $P_{q-1}$ to $\sum_{u,v\in N(u_2)} w_2(u,v)$ is
$$2(q-1)-2 + 2(q-3)=4q-10.$$
For any  $\lambda\in I$, using \eqref{eq:gamma2} and \eqref{eqn:xu2xu1}, we have
\begin{align*}
    g_2(\lambda) &=\sum_{i=0}^\infty \lambda_2^{-i}\gamma_2^TA_P^i\beta\\
    &\leq q-1 + \frac{2q-4 + c \frac{x_{u_1}}{x_{u_2}}}{\lambda} + \frac{4q-10
    + \eta +O(1)\cdot \frac{x_{u_1}}{x_{u_2}} }{\lambda^2} + \frac{8q+O(1)}{\lambda^3} + \frac{16q+O(1)}{\lambda^4} +O\left(\frac{q}{\lambda^5}\right)\\
     &\leq q-1 + \frac{2q-4 - \frac{c^2}{\lambda}}{\lambda} + \frac{4q-10
    + \eta} {\lambda^2} + \frac{8q+O(1)}{\lambda^3} + \frac{16q+O(1)}{\lambda^4} + O\left(\frac{q}{\lambda^5}\right)\\
    &= (q-1) + \frac{2q-4}{\lambda} + \frac{4q-10 -c^2+\eta}{\lambda^2} + \frac{8q+O(1)}{\lambda^3} + \frac{16q+O(1)}{\lambda^4} + O\left(\frac{q}{\lambda^5}\right)\\
     &< (q-1) + \frac{2q-4}{\lambda} + \frac{4q-12}{\lambda^2} + \frac{8q-32}{\lambda^3} + \frac{16q-64}{\lambda^4} + O\left(\frac{q}{\lambda^5}\right)\\
    &=f(\lambda).
\end{align*}

If Claim 6 fails, we have $\eta-c^2=5-3^2=-4$. Thus, the last inequality holds.
By Lemma \ref{lem:compare}, we have $\lambda_{2,max}=\lambda_g<\lambda_f=\lambda_2(G_0),$ a contradiction and Claim 6 is proven.\vspace{0.5cm}

\noindent{\bf Claim 7}: $G[{N(u_1)}]$ forms a path of length $q-1$. The ending vertices of two edges in $E(N(u_1), N(u_2))$ have degrees
$1,1,2,2$ in $P$.\vspace{0.5cm}

Let $|E(G[{N(u_1)}])|=q-1-c'$. Since $G$ is 2-connected and outerplanar, we must have $c'=0$ or $1$. Write the two edges in $E(N(u_1), N(u_2))$
as $v_1w_1$ and $v_2w_2$ with $w_1,w_2\in N(u_2)$. Let $\eta(v_1,v_2)=1$ if $v_1v_2$ is not an edge, and $0$ otherwise. We have
\begin{align*}
    F_1(\lambda)&= q + \frac{2q-2-2c'}{\lambda} + \frac{4q+O(1)}{\lambda^2} +
    \frac{8q+O(1)}{\lambda^3}
    + O\left(\frac{q}{\lambda^4}\right)\\
     F_2(\lambda)&= q -1 + \frac{2q-4}{\lambda} + \frac{4q-8}{\lambda^2} +
    \frac{8q-10 -2(d^P(w_1)+d^P(w_2))-2\eta(v_1,v_2)}{\lambda^3} 
    + \frac{16q+O(1)}{\lambda^4}+\frac{32q+O(1)}{\lambda^5}  \\
    &+ O\left(\frac{q}{\lambda^6}\right)\\
    D(\lambda) &= \frac{2}{\lambda} +\frac{d^P(v_1)+d^P(w_1)+d^P(v_2)+d^P(w_2)
    }{\lambda^2}
\end{align*}

Therefore, 
\begin{align*}
    \lambda^2 &=\frac{1}{2}
\left( (F_1(\lambda) +F_2(\lambda)) 
- \sqrt{ (F_1(\lambda) -F_2(\lambda))^2 + 4 D(\lambda)^2}
\right)\\
&=\frac{1}{2}
\left( (F_1(\lambda) +F_2(\lambda)) 
- (F_1(\lambda)-F_2(\lambda) \sqrt{ 1+ 
\frac{4 D(\lambda)^2}{(F_1(\lambda) -F_2(\lambda))^2} }
\right)\\
&=\frac{1}{2}
\left( (F_1(\lambda) +F_2(\lambda)) 
- (F_1(\lambda)-F_2(\lambda) \left( 1+ 
\frac{2 D(\lambda)^2}{(F_1(\lambda) -F_2(\lambda))^2} + O\left( \frac{1}{(q-1)^2}\right)
\right)
\right)\\
&= F_2(\lambda)- \frac{D(\lambda)^2}{(F_1(\lambda) -F_2(\lambda))}+ O\left( \frac{1}{(q-1)^2}\right)\\
&= q -1 + \frac{2q-4}{\lambda} + \frac{4q-8}{\lambda^2} +
    \frac{8q-10- 2(d^P(w_1)+d^P(w_2))-2\eta(v_1,v_2)}{\lambda^3} 
    + \frac{16q+O(1)}{\lambda^4}+\frac{32q+O(1)}{\lambda^5} +O\left(\frac{q}{\lambda^6}\right) \\
 &\hspace*{2mm}   
    - \left(\frac{2}{\lambda} +\frac{d^P(v_1)+d^P(w_1)+d^P(v_2)+d^P(w_2)
    }{\lambda^2}\right)^2 \left(1-\frac{2-2c'}{\lambda}\right)   
    + O\left(\frac{1}{(q-1)^2}\right)\\
&= (q-1) + \frac{2q-4}{\lambda} + \frac{4q-12}{\lambda^2} + \frac{8q-2 - (4d^P(v_1)+6d^P(w_1)+4d^P(v_2)+6d^P(w_2))-2\eta(v_1,v_2)-8c'
}{\lambda^3}  \\
 &\hspace*{2mm} 
+ \frac{16q+O(1)}{\lambda^4}+\frac{32q+O(1)}{\lambda^5} +O\left(\frac{q}{\lambda^6}\right).
\end{align*}
Since $G[{N(u_2)}]=P_{q-1}$, we must have
$$d^P(w_1) + d^P(w_2)\geq 1+2=3.$$
When $c'=0$, we have
$$ 4(d^P(v_1)+d^P(v_2))+6(d^P(w_1)+d^P(w_2))\geq 4(1+2)+ 6(1+2)=30,$$
with equality if and only if
the ending vertices of two edges in $E(N(u_1), N(u_2))$ have degrees
$1,1,2,2$ in $P$.
When $c'=1$, $v_1v_2$ cannot be an edge, as otherwise $u_1$ would be a cut vertex in $G$. We have
$$ 4(d^P(v_1)+d^P(v_2))+6(d^P(w_1)+d^P(w_2))+2\eta(v_1,v_2)\geq 4(1+0)+ 6(1+2)+2=24.$$
Therefore, if Claim 7 fails, then
we have
\[ 8q-8 - (6d^P(v_1)+4d^P(w_1)+6d^P(v_2)+4d^P(w_2))-8c'\leq 8q-33.\]
And so,
\begin{align*}
g(\lambda)
&\leq
(q-1) + \frac{2q-4}{\lambda} + \frac{4q-12}{\lambda^2} + \frac{8q-33
}{\lambda^3}  + \frac{16q+O(1)}{\lambda^4}+\frac{32q+O(1)}{\lambda^5} +O\left(\frac{q}{\lambda^6}\right)\\
&<
 (q-1) + \frac{2q-4}{\lambda} + \frac{4q-12}{\lambda^2} + \frac{8q-32}{\lambda^3} + \frac{16q-64}{\lambda^4}+\frac{32q-112}{\lambda^5} +O\left(\frac{q}{\lambda^6}\right)\\
 &=f(\lambda).
\end{align*}
By Lemma \ref{lem:compare}, we have $\lambda_{2,max}=\lambda_g<\lambda_f=\lambda_2(G_0),$ a contradiction and Claim 7 is proven.
Thus, $G$ must be one of the following graphs.
\begin{figure}[hbt]
    \centering
    \resizebox{4cm}{!}{\begin{tikzpicture}[scale=1, Wvertex/.style={circle, draw=black, fill=white, scale=1}, bvertex/.style={circle, draw=black, fill=black, scale=0.3}]

\node [bvertex] (u0) at (-0.5, -1.5) {};
\node [bvertex] (u1) at (-3, -0.5) {};
\node [bvertex] (u2) at (-2, -0.5) {};
\node [bvertex] (u3) at (-1, -0.5) {};
\node [bvertex] (u4) at (0, -0.5) {};
\node [bvertex] (u5) at (1, -0.5) {};
\node [bvertex] (u6) at (2, -0.5) {};

\node [bvertex] (v0) at (-0.5, 1.5) {};
\node [bvertex] (v1) at (-3, 0.5) {};
\node [bvertex] (v2) at (-2, 0.5) {};
\node [bvertex] (v3) at (-1, 0.5) {};
\node [bvertex] (v4) at (0, 0.5) {};
\node [bvertex] (v5) at (1, 0.5) {};
\node [bvertex] (v6) at (2, 0.5) {};
\node [bvertex] (v7) at (3, 0.5) {};

\draw (u0) -- (u1);
\draw (u0) -- (u2);
\draw (u0) -- (u3);
\draw (u0) -- (u4);
\draw (u0) -- (u5);
\draw (u0) -- (u6);
\draw (v0) -- (v7);

\draw (u1) -- (u2);
\draw (u3) -- (u2); 
\draw [dashed] (u4) -- (u3) node [midway, above=3pt] {$...$};
\draw (u5) -- (u4);
\draw (u6) -- (u5);
\draw (v6) -- (v7);

\draw (v0) -- (v1);
\draw (v0) -- (v2);
\draw (v0) -- (v3);
\draw (v0) -- (v4);
\draw (v0) -- (v5);
\draw (v0) -- (v6);

\draw (v1) -- (v2);
\draw (v3) -- (v2); 
\draw [dashed] (v4) -- (v3) node [midway, below=3pt] {$...$};
\draw (v5) -- (v4);
\draw (v6) -- (v5);

\draw (u1) -- (v1);
\draw (u2) -- (v2);
\end{tikzpicture}}
    \hfil
  \resizebox{7cm}{!}{\begin{tikzpicture}[scale=1, Wvertex/.style={circle, draw=black, fill=white, scale=1}, bvertex/.style={circle, draw=black, fill=black, scale=0.3}]

\node [bvertex] (u0) at (-0.5, -0.5) {};
\node [bvertex] (u1) at (-3, 1) {};
\node [bvertex] (u2) at (-2, 1) {};
\node [bvertex] (u3) at (-1, 1) {};
\node [bvertex] (u4) at (0, 1) {};
\node [bvertex] (u5) at (1, 1) {};
\node [bvertex] (u6) at (2, 1) {};

\node [bvertex] (v0) at (3.5, 2.5) {};
\node [bvertex] (v1) at (1, 2) {};
\node [bvertex] (v2) at (2, 2) {};
\node [bvertex] (v3) at (3, 2) {};
\node [bvertex] (v4) at (4, 2) {};
\node [bvertex] (v5) at (5, 2) {};
\node [bvertex] (v6) at (6, 2) {};

\draw (u0) -- (u1);
\draw (u0) -- (u2);
\draw (u0) -- (u3);
\draw (u0) -- (u4);
\draw (u0) -- (u5);
\draw (u0) -- (u6);

\draw (u1) -- (u2);
\draw (u3) -- (u2); 
\draw [dashed] (u4) -- (u3) node [midway, above=3pt] {$...$};
\draw (u5) -- (u4);
\draw (u6) -- (u5);

\draw (v0) -- (v1);
\draw (v0) -- (v2);
\draw (v0) -- (v3);
\draw (v0) -- (v4);
\draw (v0) -- (v5);
\draw (v0) -- (v6);

\draw (v1) -- (v2);
\draw (v3) -- (v2); 
\draw [dashed] (v4) -- (v3) node [midway, below=3pt] {$...$};
\draw (v5) -- (v4);
\draw (v6) -- (v5);

\draw (u6) -- (v2);
\draw (u5) -- (v1);
\end{tikzpicture}}
\caption{The graphs $G'_0$ (on the left) and $G_0$ (on the right).} 
    \label{fig:enter-label}
\end{figure}\\
It suffices to show $G$ is not $G'_0$.
In fact, for $G'_0$, we can calculate $F_1(\lambda)$, $F_2(\lambda)$, and $D(\lambda)$
as follows:
\begin{align*}
F_1(\lambda)&=  q + \frac{2q-2}{\lambda} + \frac{4q-4}{\lambda^2} + \frac{8q-8}{\lambda^3} + \frac{16q-12}{\lambda^4}+\frac{32q-14}{\lambda^5} + \frac{64q+5}{\lambda^6} 
+O\left(\frac{q}{\lambda^7}\right),\\
       F_2(\lambda) &=  (q-1) + \frac{2q-4}{\lambda} + \frac{4q-8}{\lambda^2} + \frac{8q-16}{\lambda^3} + \frac{16q-28}{\lambda^4}+\frac{32q-46}{\lambda^5} 
       + \frac{64q-59}{\lambda^6}+O\left(\frac{q}{\lambda^7}\right).\\
    D(\lambda)&=  \frac{2}{\lambda} + \frac{6}{\lambda^2} + \frac{17}{\lambda^3} + \frac{44}{\lambda^4}+\frac{112}{\lambda^5}
    +\frac{276}{\lambda^6}
    +O\left(\frac{q}{\lambda^7}\right).
\end{align*}

Therefore,
\begin{align*}
    g(\lambda) &=\frac{1}{2}
\left( (F_1(\lambda) +F_2(\lambda)) 
- \sqrt{ (F_1(\lambda) -F_2(\lambda))^2 + 4 D(\lambda)^2}
\right)\\
&= \frac{1}{2}
\left(2q-1 + \frac{4q-6}{\lambda} + \frac{8q-12}{\lambda^2} + \frac{16q-24}{\lambda^3} + \frac{32q-40}{\lambda^4}+\frac{64q-60}{\lambda^5} +\frac{128q-54}{\lambda^6} 
 +O\left(\frac{q}{\lambda^7}\right)\right.\\
& 
\left.
-\sqrt{
\left(1+ \frac{2}{\lambda} + \frac{4}{\lambda^2} + \frac{8}{\lambda^3} + \frac{16}{\lambda^4}+\frac{32}{\lambda^5} +\frac{64}{\lambda^6}+O\left(\frac{q}{\lambda^7}\right)\right)^2
+ 4\left(\frac{2}{\lambda} + \frac{6}{\lambda^2} + \frac{17}{\lambda^3} + \frac{44}{\lambda^4}+\frac{112}{\lambda^5} +\frac{276}{\lambda^6} +O\left(\frac{q}{\lambda^7}\right)\right)^2
}
\right) \\
&= (q-1) + \frac{2q-4}{\lambda} + \frac{4q-12}{\lambda^2} + \frac{8q-32}{\lambda^3} + \frac{16q-68}{\lambda^4}+\frac{32q-122}{\lambda^5} +\frac{64q-244}{\lambda^6} +O\left(\frac{q}{\lambda^7}\right)\\
&<(q-1) + \frac{2q-4}{\lambda} + \frac{4q-12}{\lambda^2} + \frac{8q-32}{\lambda^3} + \frac{16q-64}{\lambda^4}+\frac{32q-112}{\lambda^5} +\frac{64q-232}{\lambda^6} +O\left(\frac{q}{\lambda^7}\right)\\
&=f(\lambda).
\end{align*}

Thus, we have $\lambda_2(G_0')<\lambda_2(G_0)$. The crucial difference between $G_0'$ and $G_0$ is that there are $17$ $(u, v)$-walks of length $3$ in $G_0'$ for $q\ge 5$ with $(u,v) \in N(u_1) \times N(u_2)$, but only $16$ such walks in $G_0$ (as can be seen by comparing the coefficients of $\lambda^{-3}$ in $D(\lambda)$ for $G_0$ and $G_0'$). 
This completes the proof that $G_0$ is the unique extemal graph among 
all 2-connected outerplanar graphs on $n$ vertices.
\end{proof}

\section{Future directions}

In this paper, we have determined the leading order asymptotics of $\lambda_{k, max}$ for any fixed $k$ and also determined the precise extremal graphs for $k=2$ in the class of $n$-vertex outerplanar graphs, connected $n$-vertex outerplanar graphs, and $2$-connected $n$-vertex outerplanar graphs. We make the following conjectures for the extremal graphs for $k=3$. 

\begin{conjecture}\label{conj3q}
    For $n$ sufficiently large, among all connected outerplanar graphs on $n=3q$ vertices, the graph maximizing $\lambda_3$ is unique and isomorphic to the following graph, denoted by $(K_1\vee P_{q-1})\!\!-\!\!(K_1\vee P_{q-1})\!\!-\!\!(K_1\vee P_{q-1})$, which is constructed by gluing three disjoint copies of the fan graph $K_1\vee P_{q-1}$ via edges connecting vertices of smallest degrees to largest degree between all three components (see Figure~\ref{fig:outerplanar_3q}).
\end{conjecture}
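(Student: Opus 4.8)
The plan is to adapt the strategy of Theorems~\ref{thm:t2} and \ref{thm:t4} to the case $k=3$, $n=3q$. Let $G$ be a connected outerplanar graph on $n=3q$ vertices with $\lambda_3(G)=\lambda_{3,max}$ and let $G_0$ be the graph in Conjecture~\ref{conj3q}. By Theorem~\ref{thm:t1} and the first part of Lemma~\ref{lem:degrees}, $G$ has exactly three vertices $u_1,u_2,u_3$ of degree $q\pm O(1)$ with $\tilde d_{u_i}\ge q-2$, every other vertex has degree $O(1)$, and $\lambda_3$ is simple because $\lambda_4(G)\le\lambda_1(G\setminus\{u_1,u_2,u_3\})=O(1)$ while $\lambda_3(G)=\sqrt q+1+O(1/\sqrt n)$. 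First I would prove the analogue of Lemma~\ref{lem:u1u2notedge}, that no two of $u_1,u_2,u_3$ are adjacent, by running its volume argument for each of the three pairs. Then, with $P=G\setminus\{u_1,u_2,u_3\}$, $A_P$ its adjacency matrix, $\bx$ the $\lambda_3$-eigenvector and $\by=\bx|_{V(P)}$, the eigen-equation gives $\by=\sum_{i\ge 0}\lambda_3^{-(i+1)}A_P^i\beta$ with $\beta=\beta_1+\beta_2+\beta_3$ (notation as in \eqref{eq:beta1def}--\eqref{eq:gamma2def}), and combined with $\lambda_3=\gamma_j^T\by$ for $j=1,2,3$ this yields
\[\lambda_3^2\,x_{u_j}=\sum_{\ell=1}^{3}x_{u_\ell}\,M_{j\ell}(\lambda_3),\qquad j=1,2,3,\]
where $M_{j\ell}(\lambda)=\sum_{i\ge 0}\lambda^{-i}\sum_{(u,v)\in N(u_j)\times N(u_\ell)}w_i(u,v)$. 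Equivalently, via the Schur complement of $A_P$ (using $A(G)[\{u_1,u_2,u_3\}]=0$), every eigenvalue of $G$ exceeding $\lambda_1(A_P)$ is a root of $\det(\lambda^2 I-M(\lambda))=0$ for the symmetric matrix $M(\lambda)=(M_{j\ell}(\lambda))$; there are exactly three such eigenvalues, and $\lambda_3$ is the smallest, so $\lambda_3^2$ is the smallest eigenvalue of $M(\lambda_3)$. On the short interval $I=\bigl(\sqrt q+1-\tfrac{c_1}{\sqrt q},\ \sqrt q+1+\tfrac{c_2}{\sqrt q}\bigr)$ this identifies a decreasing branch $g$ with $\lambda_3^2=g(\lambda_3)$, in the same way the $2\times2$ identity $\lambda^2=\tfrac12\bigl((F_1+F_2)-\sqrt{(F_1-F_2)^2+4D^2}\bigr)$ arises in the proof of Theorem~\ref{thm:t4}.

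Next I would obtain the target equation $\lambda^2=f(\lambda)$ for $\lambda_3(G_0)$. For $G_0$ the matrix $M(\lambda)$ is computed directly: the diagonal entries come from the three fans, the gluing edges supply the off-diagonal entries between consecutive blobs, and the remaining off-diagonal entry is negligible (of order $\lambda^{-\Theta(q)}$), since the two corresponding neighborhoods lie in different components of $G_0\setminus\{u_1,u_2,u_3\}$; thus $M(\lambda)$ is, up to negligible terms, tridiagonal, and $\det(\lambda^2 I-M(\lambda))=0$ gives $f(\lambda)$ and (via SageMath) its series expansion, the analogue of Lemma~\ref{lem:lamda2even}. With $f$ in hand I would mirror the claim-sequences of Theorems~\ref{thm:t2} and \ref{thm:t4}: record the coarse bounds $M_{jj}(\lambda)=d_{u_j}+O(q/\lambda)$, $M_{j\ell}(\lambda)=O(1)$ for $j\ne\ell$, hence $[\lambda^{-i}]g\le 2^i q+O(1)$, and then force in turn $d_{u_1}=d_{u_2}=d_{u_3}=q-1$, that $N(u_1),N(u_2),N(u_3)$ partition $V(P)$, that each $G[N(u_i)]$ is the path $P_{q-1}$, and that the gluing edges sit exactly where they do in $G_0$; at each step a deviation strictly decreases some low-order coefficient of $g$ while leaving the higher ones bounded by $2^i q+O(1)$, so Lemma~\ref{lem:compare} yields $\lambda_{3,max}=\lambda_g<\lambda_f=\lambda_3(G_0)$, a contradiction. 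Uniqueness among the finitely many survivors is then settled, as in the closing comparison of $G_0$ with $G_0'$ in the proof of Theorem~\ref{thm:t4}, by looking at the first coefficient on which two of them differ.

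The step I expect to be the main obstacle is proving that the three neighborhoods sit in a path-like configuration: after relabelling, $N(u_1)\cap N(u_3)=\emptyset$ and $E(N(u_1),N(u_3))=\emptyset$, with $N(u_2)$ separating the other two. This is genuinely the heart of the matter, not a formality, because a one-line perturbation computation shows that inserting an edge between $N(u_1)$ and $N(u_3)$ would, taken on its own, \emph{raise} the smallest eigenvalue of $M(\lambda)$, i.e.\ raise $\lambda_3$; so the extremality of $G_0$ has to be squeezed out of outerplanarity forbidding such edges. Concretely, contracting each $N(u_i)$ onto $u_i$ shows that three pairwise-linked ``fat'' vertices would force a $K_4$- or $K_{2,3}$-minor, so in an outerplanar graph the three blobs cannot be pairwise adjacent; one then has to upgrade this to the statement that in the extremal graph $N(u_2)$ in fact separates $N(u_1)$ from $N(u_3)$. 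Once the path topology is established, $M(\lambda)$ is (up to negligible terms) a Jacobi matrix with positive off-diagonal entries $M_{12},M_{23}$, so the eigenvector of its smallest eigenvalue has sign pattern $(+,-,+)$ on $(x_{u_1},x_{u_2},x_{u_3})$ — the analogue of the relation $x_{u_1}=-x_{u_2}$ in the $k=2$ proofs — which is exactly what makes $M_{12}$ and $M_{23}$ enter $g$ with the favourable sign. Carrying out this topological step rigorously, while only the $O(\sqrt n)$-coarse structure of $G$ is available, is the crux; everything downstream is bookkeeping of the type already done for $k=2$.
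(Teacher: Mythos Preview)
The paper does not prove this statement. Conjecture~\ref{conj3q} appears in Section~6 (Future directions) as an open conjecture, supported only by computational testing for small $n$; there is no proof in the paper for you to compare against.

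Beyond that, your sketch contains a substantive misreading of the conjectured extremal graph. In $G_0$ the three fans are glued \emph{cyclically}: each center $u_i$ is joined by an edge to a degree-$2$ endpoint of the path in the \emph{next} fan, so that (after deleting $u_1,u_2,u_3$) one has $|N(u_i)\cap N(u_{i+1}\bmod 3)|=1$ for every pair and $d_{u_i}=q$. In particular, all three off-diagonal entries $M_{j\ell}(\lambda)$ are of the same order; none is ``negligible of order $\lambda^{-\Theta(q)}$'' as you assert, and $M(\lambda)$ is not tridiagonal. Your Jacobi-matrix sign-pattern argument and your plan to force a path topology with $N(u_2)$ separating $N(u_1)$ from $N(u_3)$ therefore aim at the wrong target. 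In fact, your own first-order observation---that adding a link between two blobs raises the smallest eigenvalue of $M(\lambda)$---is precisely why the cyclic $G_0$, which realizes the maximum amount of cross-linking compatible with outerplanarity, is the natural candidate; it should have pointed you toward a cyclic rather than a linear configuration.

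A second gap: the $\mathbb{Z}_3$-symmetry of $G_0$ forces $\lambda_2(G_0)=\lambda_3(G_0)$, so $\lambda_3$ is \emph{not} simple for the conjectured extremal graph. Your argument that ``$\lambda_3$ is simple because $\lambda_4(G)\le\lambda_1(G\setminus\{u_1,u_2,u_3\})=O(1)$'' only separates $\lambda_3$ from $\lambda_4$; it says nothing about $\lambda_2$ versus $\lambda_3$, and indeed they coincide here. Any adaptation of the $k=2$ machinery would have to handle a two-dimensional eigenspace at $\lambda_3$, which changes the shape of the eigen-equation analysis. These two issues together mean the proposal, as written, is not a viable route to the conjecture.
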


\begin{conjecture}\label{conj3q+1}
    For $n$ sufficiently large, among all connected outerplanar graphs on $n=3q+1$ vertices, the graph maximizing $\lambda_3$ is unique and isomorphic to the following graph, denoted by $3(K_1\vee P_{q-1})\!\!-\!\!K_1$, which is constructed by gluing three disjoint copies of the fan graph $K_1\vee P_{q-1}$ to a $K_1$ via edges connecting any vertices (but maintaining outerplanarity) to that $K_1$.
\end{conjecture}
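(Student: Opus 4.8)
The plan for proving Conjecture~\ref{conj3q+1} is to adapt the machinery developed for Theorems~\ref{thm:t2}, \ref{thm:t3} and~\ref{thm:t4} to the case $k=3$, $n=3q+1$. Let $G$ be a connected outerplanar graph on $n=3q+1$ vertices with $\lambda_3(G)=\lambda_{3,max}(n)$. By Theorem~\ref{thm:t1} together with part (1) of Lemma~\ref{lem:degrees}, $G$ has exactly three vertices $u_1,u_2,u_3$ of degree $q+O(1)$, each with $\tilde d_{u_i}\ge q-1$, while every other vertex has degree $O(1)$ and only $O(1)$ vertices lie outside $N(u_1)\cup N(u_2)\cup N(u_3)$. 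The matching lower bound $\lambda_{3,max}(n)\ge \lambda_1(K_1\vee P_{q-1})$ comes from $3(K_1\vee P_{q-1})\!\!-\!\!K_1$ via Cauchy's Interlacing Theorem (compare Corollary~\ref{cor:klb}), and for any graph with the claimed structure one gets $\lambda_3=\lambda_1(K_1\vee P_{q-1})$ exactly, since deleting the cut vertex leaves $3(K_1\vee P_{q-1})$, whose first three eigenvalues all equal $\lambda_1(K_1\vee P_{q-1})$, and interlacing pins $\lambda_3(G)$ between $\lambda_2$ and $\lambda_3$ of that graph.

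First I would delete $u_1,u_2,u_3$ to obtain $P=G-\{u_1,u_2,u_3\}$, a linear forest plus $O(1)$ extra edges, so that $\lambda_1(A_P)=O(1)<\lambda_3(G)$; expanding $(\lambda_3 I-A_P)^{-1}$ in a Neumann series as in Section~4 gives three equations, one per hub, of the form $\lambda_3^2=F_i(\lambda_3)+\sum_{j\ne i}D_{ij}(\lambda_3)\,\tfrac{x_{u_j}}{x_{u_i}}$, where $F_i$ and $D_{ij}$ are generating functions for walks inside $N(u_i)$ and between $N(u_i)$ and $N(u_j)$. Eliminating the two independent ratios $x_{u_j}/x_{u_i}$ from these three equations yields a single equation $\lambda_3^2=g(\lambda_3)$ with $g(\lambda)=\sum_{i\ge 0}a_i\lambda^{-i}$ depending only on walk counts in $P$; this is the $k=3$ analogue of the formula $\lambda^2=\tfrac12\big((F_1+F_2)-\sqrt{(F_1-F_2)^2+4D^2}\big)$ from the proof of Theorem~\ref{thm:t4}. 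Writing down the corresponding $\lambda^2=f(\lambda)$ for $G_0=3(K_1\vee P_{q-1})\!\!-\!\!K_1$ and comparing roots with Lemma~\ref{lem:compare} on a short interval $I$ around $\sqrt{n/3}+1$, the conjecture reduces (exactly as in Theorems~\ref{thm:t2}--\ref{thm:t4}) to showing that, unless $G$ has the claimed structure, some low-order coefficient $a_i(G)$ is strictly below $a_i(G_0)$ while all earlier ones agree, forcing $g<f$ on $I$ and hence $\lambda_3(G)<\lambda_3(G_0)$.

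The coefficient bookkeeping would run as in Section~4. An $a_0$-estimate of the form $2a_0\le\sum_i d_{u_i}-2\sum_{i<j}|N(u_i)\cap N(u_j)|$ forces $N(u_1),N(u_2),N(u_3)$ to almost partition $V(P)$, leaving a short list of cases --- the $k=3$ analogue of Claim~1 in the proof of Theorem~\ref{thm:t3}, now with each $d_{u_i}\in\{q-1,q\}$ and small controlled pairwise intersections --- in each of which there is exactly one leftover vertex $u$. An $a_1$-bound then forces each $G[N(u_i)]$ to be a path $P_{q-1}$ and the number of edges between blocks and incident to $u$ to be minimal, making $u$ a cut vertex with $G-u\supseteq 3(K_1\vee P_{q-1})$; $a_2$- and $a_3$-bounds force $G-u=3(K_1\vee P_{q-1})$ exactly (in the asymmetric cases, where the hub degrees differ, the Tait--Tobin argument used for Case~3 of Theorem~\ref{thm:t3} upgrades any non-fan component to $K_1\vee P_{q-1}$).

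The hard part will be the sign and cross-ratio control among the three hubs. For $k=2$ the $\lambda_2$-eigenvector is one-dimensional and orthogonality to the positive Perron vector $\bz$ gives $x_{u_1}=-x_{u_2}$, which makes the negative cross term $x_{u_1}/x_{u_2}+x_{u_2}/x_{u_1}\le-2$ available in \eqref{eqn:aiineq}; here the $\lambda_3$-eigenspace of $G_0$ is two-dimensional, and with three hubs some pair must share a sign, so that pair's cross term is $\ge 2$ rather than $\le-2$. The elimination-of-ratios route avoids committing to a sign pattern but produces a messier $g$ whose monotonicity on $I$ and whose expansion through the $a_3$- (and possibly $a_4$-) term must be verified, presumably with SageMath as elsewhere in the paper; pushing the asymmetric cases through and, in particular, distinguishing $G_0$ from its near-miss competitors at the $a_3$-level --- the $k=3$ analogue of separating $G_0$ from $G'_0$ in the proof of Theorem~\ref{thm:t4} --- is where most of the technical effort will go.
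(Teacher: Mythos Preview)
The statement is a \emph{conjecture} in the paper; there is no proof to compare against. The authors state explicitly that they have only tested it numerically for small $n$. So what you have written is not being checked against an existing argument but is a proposed attack on an open problem.

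Your outline follows the paper's $k=2$ machinery closely and the broad strategy is sound, but there is a genuine structural obstacle that you flag at the end without resolving, and it is more serious than you indicate. For the conjectured extremal graph $G_0$, interlacing with $G_0-u=3(K_1\vee P_{q-1})$ forces $\lambda_2(G_0)=\lambda_3(G_0)=\lambda_1(K_1\vee P_{q-1})$, since the triple eigenvalue $\mu$ of $G_0-u$ must sit in each of the gaps $[\lambda_2(G_0),\lambda_1(G_0)]$, $[\lambda_3(G_0),\lambda_2(G_0)]$, $[\lambda_4(G_0),\lambda_3(G_0)]$. Thus $\lambda_3$ is \emph{not} simple at $G_0$, and the single-eigenvector expansion $\by=(\lambda_3 I-A_P)^{-1}\beta$ that drives the entire $k=2$ argument is not available without choosing a vector in a two-dimensional eigenspace. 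Your proposed fix---eliminate the two ratios $x_{u_j}/x_{u_i}$ from three hub equations---does yield a determinant condition independent of the eigenvector choice, but this is now a $3\times3$ determinant in the $F_i$ and $D_{ij}$ rather than the quadratic $(\lambda^2-F_1)(\lambda^2-F_2)=D^2$ from Theorem~\ref{thm:t4}; extracting a clean $g(\lambda)=\sum a_i\lambda^{-i}$ with the right monotonicity and with coefficients admitting the ``drop by one forces contradiction'' comparison is substantially harder, and the loss of the sign inequality $x_{u_1}/x_{u_2}+x_{u_2}/x_{u_1}\le -2$ (since two hubs share a sign) removes the shortcut \eqref{eqn:aiineq}.

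A second issue: the conjecture as stated says the extremal graph is ``unique,'' but your own argument shows that any connection pattern from the central $K_1$ to the three fans gives the same $\lambda_3$; these are not all isomorphic, so at best you would prove the structural characterization (as in Theorem~\ref{thm:t3} and Conjecture~\ref{conjkq+1}), not uniqueness. Your plan does not address this discrepancy.
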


\begin{figure}
    \begin{center}
     \resizebox{7.5cm}{!}{\begin{tikzpicture}[scale=1, Wvertex/.style={circle, draw=black, fill=white, scale=1}, bvertex/.style={circle, draw=black, fill=black, scale=0.3}]
		
    \node [bvertex] (u0) at (2, -1) {};
    \node [bvertex] (u1) at (0, -3) {};
    \node [bvertex] (u2) at (-0.5, -2.5) {};
    \node [bvertex] (u3) at (-1.5, -1.5) {};
    \node [bvertex] (u4) at (-2, -1) {};

    \node [bvertex] (v0) at (2, 3) {};
    \node [bvertex] (v1) at (5, 3) {};
    \node [bvertex] (v2) at (4.6, 2.5) {};
    \node [bvertex] (v3) at (3.85, 1.5) {};
    \node [bvertex] (v4) at (3.5, 1) {};
    
    \node [bvertex] (w0) at (-3.5, 1) {};
    \node [bvertex] (w1) at (-5, 3) {};
    \node [bvertex] (w2) at (-4.5, 3) {};
    \node [bvertex] (w3) at (-2.5, 3) {};
    \node [bvertex] (w4) at (-2, 3) {};

    \draw (u0) -- (u1);
    \draw (u0) -- (u2);
    \draw (u0) -- (u3);
    \draw (u0) -- (u4);
    
    \draw (u1) -- (u2);
    \draw (u3) -- (u4); 
    \draw [dashed] (u2) -- (u3) node [midway, rotate=135, above=5pt]{}; 

    \draw (v0) -- (v1);
    \draw (v0) -- (v2);
    \draw (v0) -- (v3);
    \draw (v0) -- (v4);
    
    \draw (v1) -- (v2);
    \draw (v3) -- (v4); 
    \draw [dashed] (v2) -- (v3) node [midway, rotate=-125, above=5pt]{}; 

    \draw (w0) -- (w1);
    \draw (w0) -- (w2);
    \draw (w0) -- (w3);
    \draw (w0) -- (w4);
    
    \draw (w1) -- (w2);
    \draw (w3) -- (w4); 
    \draw [dashed] (w2) -- (w3) node [midway, above=5pt]{};
    
    \draw (w0) -- (u4);
    \draw (u0) -- (v4);
    \draw (v0) -- (w4);
    
\end{tikzpicture}}
    \end{center}
    \caption{The conjectured outerplanar graph maximizing $\lambda_3$ on $n=3q$ vertices.}
    \label{fig:outerplanar_3q}
    \end{figure}

Note that Conjecture~\ref{conj3q+1} is a special case of Conjecture~\ref{conjkq+1}. 

\begin{conjecture}\label{cong3q+2}
    For $n$ sufficiently large, among all connected outerplanar graphs on $n=3q+2$ vertices, the graph maximizing $\lambda_3$ is unique and isomorphic to the graph obtained after deleting a lowest degree vertex from the connected outerplanar graph that (we conjecture) maximizes $\lambda_3$ on $3(q+1)$ vertices
\end{conjecture}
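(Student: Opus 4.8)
The plan is to run the machinery of Theorems~\ref{thm:t2}--\ref{thm:t4} with $k=3$. It is convenient to take as the target $G_0$ the explicit graph produced by the vertex deletion: two copies of $K_1\vee P_q$ and one copy of $K_1\vee P_{q-1}$, glued cyclically by three edges, each joining the hub of one fan to a degree-two path endpoint of the next. One checks that this is indeed what deleting a lowest-degree vertex from the conjectured $3(q+1)$-vertex extremal graph of Conjecture~\ref{conj3q} produces; a fully self-contained argument would instead prove directly that this $G_0$ maximizes $\lambda_3$ among connected outerplanar graphs on $n=3q+2$ vertices, which is what the steps below do, so the statement is cleanest to establish in tandem with Conjecture~\ref{conj3q}.

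Let $G$ be a connected outerplanar graph on $n=3q+2$ vertices with $\lambda_3(G)=\lambda_{3,max}(n)$. By Theorem~\ref{thm:t1} and Lemma~\ref{lem:degrees}(1) (here $\lfloor(n-1)/3\rfloor=q$), $G$ has exactly three vertices $u_1,u_2,u_3$ with $\tilde d_{u_i}\ge q-1$, all other vertices have degree $O(1)$, and $\lambda_3$ is a simple eigenvalue since $\lambda_4(G)\le\lambda_1(G\setminus\{u_1,u_2,u_3\})=O(1)$. Put $P=G\setminus\{u_1,u_2,u_3\}$; outerplanarity forces $|N(u_i)\cap N(u_j)|\le 2$ and $|E(N(u_i),N(u_j))|\le 3$ for each pair, so $P$ is a linear forest plus $O(1)$ extra edges, and $N(u_1)\cup N(u_2)\cup N(u_3)$ omits only $O(1)$ vertices of $P$.

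Next I would record the equation $\lambda^2=f(\lambda)$ with $f(\lambda)=\sum_{i\ge0}a_i(G_0)\lambda^{-i}$ satisfied by $\lambda_3(G_0)$, and the analogous $\lambda^2=g(\lambda)$ with $g(\lambda)=\sum_{i\ge0}a_i\lambda^{-i}$ for the extremal $G$. As in Section~5 these arise from the three coupled scalar eigen-equations $\lambda^2=F_i(\lambda)+\sum_{j\ne i}D_{ij}(\lambda)\,x_{u_j}x_{u_i}^{-1}$ (analogues of \eqref{eq:gamma1}--\eqref{eq:gamma2}) with $F_i,D_{ij}$ walk-generating functions supported on $N(u_i)$ and on $N(u_i)\times N(u_j)$: eliminating the hub ratios (which obey $t_{ij}t_{ji}=1$ and $t_{ij}t_{jk}=t_{ik}$) yields a single equation whose coefficients $a_i$, for $G$, weight a length-$i$ walk from $N(u_a)$ to $N(u_b)$ in $P$ by $x_{u_a}x_{u_b}^{-1}$ (symmetrised); SageMath supplies the explicit series for $f$ and $\lambda_3(G_0)$, carried to order $\lambda^{-5}$ or $\lambda^{-6}$ as in Theorem~\ref{thm:t4}. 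A sequence of claims, each showing that a structural deviation strictly decreases some $a_i$ and then invoking Lemma~\ref{lem:compare} on $I=(\sqrt{n/3}+1-d_1/\sqrt n,\ \sqrt{n/3}+1+d_2/\sqrt n)$, would pin down in turn the degrees of $u_1,u_2,u_3$ (from $a_0$), the fact that each $G[N(u_i)]$ is a path (from $a_1$), the number, cyclic pattern and attachment points of the edges between the three neighborhoods (from $a_2$ then $a_3$, exactly as Claims~2--3 and the elimination of $G_0'$ do in Theorem~\ref{thm:t4}), and finally the last near-extremal competitor by the sign of one higher coefficient, which also yields uniqueness.

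\textbf{The main obstacle} is controlling the hub ratios $x_{u_a}x_{u_b}^{-1}$. For $k=2$, orthogonality of the $\lambda_2$-eigenvector to the positive Perron vector forces $x_{u_1}=-x_{u_2}$, making every cross-term $\tfrac{x_{u_a}}{x_{u_b}}+\tfrac{x_{u_b}}{x_{u_a}}\le-2$, so \eqref{eqn:aiineq} drops out with no sign ambiguity. For $k=3$ the $\lambda_3$-eigenvector must be orthogonal to the eigenvectors of \emph{both} $\lambda_1$ and $\lambda_2$; the three hub entries need not split into one sign against the other, and a pair sharing a sign produces a \emph{positive} cross-term, so the naive analogue of \eqref{eqn:aiineq} fails. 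I expect the remedy to be to first prove that, after relabeling, $x_{u_1}x_{u_2}>0>x_{u_1}x_{u_3}$ --- e.g.\ by showing the $\lambda_3$-eigenvector of $G_0$ has exactly this sign pattern and transferring it to $G$ by a stability argument off the coarse structure, or by a direct estimate of $\sum_i\tilde d_{u_i}x_{u_i}$ against $\vol(V^+),\vol(V^-)$ in the spirit of Lemma~\ref{lem:volubs} --- and then to carry the asymmetry between the two ``paired'' fans and the ``singleton'' fan through every coefficient comparison, including the now genuinely positive contributions of $N(u_1)\cap N(u_2)$. A secondary, purely bookkeeping difficulty is that the $3\times3$ elimination is considerably heavier than the $2\times2$ one, and that separating $G_0$ from its closest competitor may require one or two more orders of $\lambda^{-1}$ than in the $k=2$ proofs.
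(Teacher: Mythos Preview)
The statement you are attempting to prove is not proved in the paper: it is Conjecture~\ref{cong3q+2}, listed in Section~6 (``Future directions'') as an open problem, with the only supporting evidence being numerical checks for small $n$. There is therefore no paper proof to compare your proposal against.

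Your proposal is not a proof but a research plan, and you say as much yourself. The strategy you outline---extending the machinery of Theorems~\ref{thm:t2}--\ref{thm:t4} to $k=3$ via a sequence of coefficient comparisons $a_0,a_1,a_2,\ldots$ using Lemma~\ref{lem:compare}---is the natural one and is presumably what the authors have in mind when they say they ``narrowed down the possible structures.'' But you correctly identify the genuine obstruction: for $k=2$ the sign structure of the $\lambda_2$-eigenvector forces $x_{u_1}x_{u_2}<0$, which makes every cross-term in \eqref{eqn:aieqn} nonpositive and yields the clean inequality \eqref{eqn:aiineq}. For $k=3$ the three hub entries $x_{u_1},x_{u_2},x_{u_3}$ cannot all have opposite signs pairwise, so at least one cross-term is positive, and the analogue of \eqref{eqn:aiineq} fails. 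Your suggested remedies (a stability argument transferring the sign pattern from $G_0$, or a volume argument in the style of Lemma~\ref{lem:volubs}) are plausible but unexecuted, and this is exactly where the real work lies. Until that step is carried out, the coefficient-by-coefficient comparison cannot even begin, so what you have written is a description of the problem rather than a solution to it.
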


We have tested these conjectures for relatively small values of $n$ by first narrowing down the possible structures of the graph maximizing $\lambda_3$ over all connected outerplanar graphs on $n$ vertices. 

We can ask the following general question. Given a family $\cal F$ of graphs on $n$ vertices, which graph in $\cal F$ has the maximum $\lambda_k$?  In particular, we are interested in planar graphs and $K_{s,t}$-minor free graphs. 

\section*{Acknowledgement}
We thank the anonymous referee for their thorough and detailed review of this paper, which has significantly improved the quality of the manuscript.

\end{document}